\numberwithin{equation}{section}
\newtheorem{remark}[theorem]{Remark}
\newtheorem{assumption}[theorem]{Assumption}
\newcommand{\set}[2]{\left\{ #1 \;:\; #2 \right\}}
\newcommand{\aall}{\mathrm{a.a.} \;}
\newcommand {\nat} {\mathbb{N}}
\newcommand {\real} {\mathbb{R}}
\newcommand{\WW}{\big( W^{1, \infty}(I) \big)}
\newcommand{\LL}{\big(L^\infty(I) \big)}
\newcommand{\BA}{\ensuremath{\mathbf{A}} } %
\newcommand{\BB}{\ensuremath{\mathbf{B}} } %
\newcommand{\BC}{\ensuremath{\mathbf{C}} } %
\newcommand{\BF}{\ensuremath{\mathbf{F}} } %
\newcommand{\BH}{\ensuremath{\mathbf{H}} } %
\newcommand{\BL}{\ensuremath{\mathbf{L}} } %
\newcommand{\BQ}{\ensuremath{\mathbf{Q}} } %
\newcommand{\BR}{\ensuremath{\mathbf{R}} } %
\newcommand{\bc}{\ensuremath{\mathbf{c}}} %
\newcommand{\bd}{\ensuremath{\mathbf{d}}} %
\newcommand{\bff}{\ensuremath{\mathbf{f}}} %
\newcommand{\bg}{\ensuremath{\mathbf{g}}} %
\newcommand{\bh}{\ensuremath{\mathbf{h}}} %
\newcommand{\bk}{\ensuremath{\mathbf{k}}} %
\newcommand{\bp}{\ensuremath{\mathbf{p}}} %
\newcommand{\bq}{\ensuremath{\mathbf{q}}} %
\newcommand{\br}{\ensuremath{\mathbf{r}}} %
\newcommand{\bu}{\ensuremath{\mathbf{u}}} %
\newcommand{\bv}{\ensuremath{\mathbf{v}}} %
\newcommand{\bx}{\ensuremath{\mathbf{x}}} %
\newcommand{\by}{\ensuremath{\mathbf{y}}} %
\newcommand{\bz}{\ensuremath{\mathbf{z}}} %
\newcommand{\balpha}{\ensuremath{\mbox{\boldmath $\alpha$}}}
\newcommand {\bdelta} {\boldsymbol{\delta}}
\newcommand {\bepsilon} {\mbox{\boldmath $\epsilon$}}
\newcommand {\bgamma} {\mbox{\boldmath $\gamma$}}
\newcommand {\blambda} {\mbox{\boldmath $\lambda$}}
\newcommand {\BPhi} {\mbox{\boldmath $\Phi$}}
\newcommand {\bphi} {\mbox{\boldmath $\phi$}}
\newcommand {\bpsi} {\boldsymbol{\psi}}
\newcommand {\cB} {\mathcal{B}}
\newcommand {\cG} {\mathcal{G}}
\newcommand {\cH} {\mathcal{H}}
\newcommand {\cN} {\mathcal{N}}
\newcommand {\cV} {\mathcal{V}}
\newcommand {\cY} {\mathcal{Y}}
\newcommand {\cZ} {\mathcal{Z}}
\DeclareMathOperator*{\esssup}{ess\,sup}
\title{Sensitivity-Driven Adaptive Surrogate Modeling for Simulation and Optimization of Dynamical Systems
       \thanks{This paper has been accepted for publication in the SIAM Journal on Scientific Computing.
       \funding{This research was supported in part by AFOSR Grant FA9550-22-1-0004 and NSF Grant DMS-2231482.}
       }}
\author{Jonathan R. Cangelosi
              \thanks{Department of Aeronautics and Astronautics,
             Stanford University, 
             496 Lomita Mall, Stanford, CA 94305
                     E-mail: jcange@stanford.edu}
             \and
             Matthias Heinkenschloss
             \thanks{Department of Computational Applied Mathematics and Operations Research,
                 MS-134, Rice University, 6100 Main Street,
                 Houston, TX 77005-1892, and the Ken Kennedy Institute, Rice University.
                 E-mail: heinken@rice.edu}
        }
\begin{document}

\maketitle

\begin{abstract}
This paper develops a surrogate model refinement approach for the simulation of dynamical systems and the solution of optimization problems governed by dynamical systems in which surrogates replace expensive-to-compute state- and control-dependent component functions in the dynamics or objective function. For example, trajectory simulation and optimization tasks for an aircraft depend on aerodynamic coefficient functions whose evaluation requires expensive computational fluid dynamics simulations for every value of the state and control encountered in simulation and optimization algorithms, which would result in prohibitively long run times, often exacerbated further by the lack of derivative information. To overcome this bottleneck, this work employs differentiable surrogates that are computed from values of the true component functions at a few points. The proposed approach updates the current surrogates on an as-needed basis as follows: given a surrogate and corresponding solution of the simulation or optimization problem, the approach combines solution sensitivity information with pointwise error estimates between the true component functions and their surrogates to define an acquisition function that is used to determine new points at which to evaluate the true component function to refine the surrogate. The performance of the proposed approach is demonstrated on a numerical example of a notional hypersonic vehicle with aerodynamic coefficient models that are approximated using kernel interpolation.
\end{abstract}

\noindent
{\bf Keywords.} Simulation, optimal control, surrogate model, model refinement, kernel interpolation. 

\medskip
\noindent
{\bf MSC codes.} 
37M05,   
49M25,   
65K10,  
65D12  
\medskip
                 
\section{Introduction} \label{sec:intro}
We present a new approach for surrogate model refinement for the simulation of dynamical systems and the solution of optimization problems governed by dynamical systems 
in which computationally inexpensive surrogate models replace computationally expensive state- and control-dependent component functions in the dynamics or in the objective function. 
This approach is useful for simulation and optimization problems where the dynamics or the objective function depend on expensive-to-evaluate component functions.
For example, trajectory simulation and optimization tasks for an aircraft in longitudinal flight depend on aerodynamic coefficient functions for lift, drag, and moment about the pitch axis,
which in turn depend on altitude, velocity, angle of attack, and possibly other state variables. 
Naively attempting to solve the simulation or optimization problem directly using 
these coefficient functions requires expensive computational fluid dynamics simulations for every value of the state and control encountered in simulation and optimization algorithms, resulting in 
prohibitively long run times of these algorithms, often exacerbated further by the lack of derivative information for the coefficients. 
In other examples, the dynamics may depend on state-dependent constitutive laws that can be determined experimentally at select points, making the experimental design crucial to strong numerical performance.
To solve these problems efficiently and accurately, we approximate the expensive-to-evaluate component functions by surrogates, use sensitivity information and surrogate error bounds to assess the impact of the error between the true component functions and their surrogates on the computed solution or on
a solution-dependent quantity of interest (QoI), then use this information to systematically improve the surrogate and reduce the error in the computed solution or QoI.

Our approach builds on the sensitivity results for simulation and optimization of dynamical systems in \cite{JRCangelosi_MHeinkenschloss_2025c}, \cite{JRCangelosi_MHeinkenschloss_2025a} using standard error estimates for surrogate models constructed via kernel interpolation. As demonstrated in  \cite{JRCangelosi_MHeinkenschloss_2025c}, \cite{JRCangelosi_MHeinkenschloss_2025a}, the
sensitivity of the (in the optimization context, local) solution of the simulation problem or of the optimization problem with respect to the component function
can be used to construct useful bounds for the error between the solution with the true component function and the solution computed with the surrogates.
These bounds require pointwise bounds for the error between the true component function and its surrogates along the computed solution.
Surrogates computed via kernel interpolation admit such bounds; see, e.g., \cite[Ch.~8]{AIske_2018b}, \cite{GSantin_BHaasdonk_2021a},
\cite{HWendland_2004a}, and  \cref{sec:surrogates}.
We note that kernel interpolation is closely related to Gaussian process (GP) regression
\cite{ABerlinet_CThomas-Agnan_2004a}, \cite{CERasmussen_CKIWilliams_2006a}.

In this paper, we will integrate sensitivity-based error estimates and pointwise surrogate error bounds along the computed solution to assess the quality of the current surrogate. 
If the surrogate needs to be refined, we want to determine a new point (or points) at which to evaluate the true component function to construct a refined surrogate.
A key contribution in this work is the construction of a new, goal-oriented, computationally efficient acquisition function that, 
for a  given trial point, estimates the error between the value of the QoI corresponding to the true component function and 
the QoI computed with the surrogate were it to be refined at the trial point, which is constructed by incorporating the 
information from the value of the true component function at the trial point through the pointwise error bound for 
the refined surrogate. 
Crucially, the error bound for the refined surrogate must be independent of the actual value of the true component 
function at the trial point, ensuring that the acquisition function does not require the evaluation of the true, 
expensive component function and thus can be computed inexpensively at many trial points. 
Kernel interpolants have such error bounds, and we will review the relevant results.
However,  we formulate our model refinement approach in a way that is agnostic about the type of surrogate used, as other surrogate modeling methods may also admit error bounds with the required properties for use in our approach.
We apply our model refinement procedure to trajectory simulation and optimization problems 
for a notional hypersonic vehicle with expensive-to-evaluate aerodynamic coefficients, where we integrate our
acquisition function, error bounds for kernel interpolants, and selection of new, trajectory-dependent samples for
surrogate refinement. Our procedure outperforms other refinement approaches based on random sampling or
maximum error of the surrogate model alone.

The need to use data-driven surrogate models in the simulation and optimization of dynamical systems, particularly when certain components of the 
dynamics are expensive to compute or are only known experimentally, is an old problem; see, e.g., \cite{AEBryson_MNDesai_WCHoffman_1969a},
\cite[Sec.~6.2]{JTBetts_2010a}. 
In applications, the construction and improvement of surrogates through the experimental design and systematic acquisition of additional data is important to ensure strong performance at low computational costs.
In the context of a vehicle co-design and trajectory optimization problem similar to the one considered in \cref{sec:numerics_OCP},
\cite{JRCangelosi_MHeinkenschloss_JTNeedels_JJAlonso_2024a} updated surrogates by selecting samples along the
current optimal trajectory using a maximum variance-based adaptive sampling approach similar to the maximum error bound (MEB) approach considered in \cref{sec:numerics}. Our results
in \cref{sec:numerics} show that our sensitivity-based data acquisition strategy outperforms this approach.
The paper  \cite{JTNeedels_JJAlonso_2024a} incorporates adjoint-based sensitivity information into an adaptive sampling approach; however, their sensitivities
do not fully incorporate state- and control-dependent coefficient functions as done in \cref{sec:sensitivity}, and their sampling does not use the predicted 
change in QoI error due to adding a sample point, as proposed in \cref{sec:model_refinement}.
The paper \cite{JHart_BvanBloemenWaanders_LHood_JParish_2023a} uses a high-dimensional parametrization to capture model discrepancies and
standard parametric solution sensitivities within a design-of-experiments framework to update the model. Our work extends their 
approach to a parameter-free, meshfree setting, which reduces computational costs associated with computing sensitivities by removing the need to parametrize surrogates and their corresponding error bounds. Our approach also obtains error bounds systematically using kernel methods rather than employing \emph{ad hoc} error reduction estimates.
Bayesian optimization approaches often employ GP/kernel-based surrogates, as seen in e.g.,  \cite{DRJones_MSchonlau_WJWelch_1998a}, \cite{PIFrazier_2018a}, \cite{RGarnett_2023a},
but they typically approximate the entire objective and constraint functions, resulting in inability to preserve the underlying system dynamics exactly. By contrast, in this paper we aim to adaptively model component functions appearing in the dynamics while preserving the structure of the dynamical system itself.
Global sensitivities and derivative information are combined in \cite{MVohra_AAlexanderian_CSafta_SMahadevan_2019a} to 
reduce effective input dimension and reduce sampling cost; however, the setting is different from ours. In particular, we are not interested in
computing component function surrogates that are good approximations everywhere, but only where their errors impact the computed solution of the simulation or
optimization problem. This reduces the need for expensive high-fidelity computations in settings where one only aims for the solution of a single simulation or optimization problem, or small perturbations thereof.

An outline of the paper is as follows: 
\Cref{sec:problem_formulation} gives formulations for simulation and optimization problems of the form we want to study. \Cref{sec:sensitivity} 
discusses sensitivity analysis of solutions with respect to a model function. 
\Cref{sec:surrogates} reviews error bounds for kernel interpolation-based surrogates. 
\Cref{sec:model_refinement} combines sensitivity results from \cref{sec:sensitivity} with the findings in \cref{sec:surrogates} to develop an adaptive refinement procedure  for surrogates and is the main contribution of our paper. 
\Cref{sec:numerics} demonstrates the model refinement procedure on trajectory simulation and optimization problems for a notional hypersonic vehicle 
with expensive-to-evaluate aerodynamic coefficients. 
\Cref{sec:conclusion} summarizes main takeaways and directions for future work.

{\bf Notation.} 
We will use $\| \cdot \|$ to denote a vector norm on $\real^m$ (where $m$ depends on the context) or an 
induced matrix norm.
By $\mathcal{B}_R(0) \subset \real^m$ we denote the closed ball in $\real^m$ around zero with radius $R>0$.
When infinite-dimensional normed linear spaces are considered, the norm will always be specified explicitly 
using subscripts.

Given an interval $I := (t_0, t_f)$,  $\big( L^\infty(I) \big)^m$ denotes the Lebesgue space of essentially bounded functions on $I$ with
values in $\real^m$, and $\big( W^{1, \infty}(I) \big)^m$ denotes the Sobolev space of functions on $I$ with 
values in $\real^m$ that are weakly differentiable on $I$ and have essentially bounded derivative.

We typically use bold font for vector- or matrix-valued functions and regular font for scalars, vectors, and matrices.
For example, the function $\bx: I \rightarrow \real^{n_x}$ has values $\bx(t) \in \real^{n_x}$, and $x \in \real^{n_x}$ denotes a vector. This distinction will be useful when
studying compositions of functions. Also, when using subscripts for derivatives, regular subscripts will be used to denote partial derivatives with respect to an argument, while boldface subscripts will be used to denote Fr\'echet derivatives with respect to a function.

\section{Problem Formulation} \label{sec:problem_formulation}

We consider adaptive surrogate model refinement in the two problem settings of simulation and optimization. 
In \cref{sec:problem_formulation_simulation,sec:problem_formulation_optimization},
we first state the prototypical problems in these two contexts in a suitable function space setting. The precise smoothness assumptions 
on the problem that are invoked to ensure existence and (in the optimization context, local) uniqueness of solutions in this setting 
will be stated in \cref{sec:sensitivity}.

\subsection{Simulation Problem} \label{sec:problem_formulation_simulation}
Let $I := (t_0, t_f)$ be an interval. Given functions
\[
	\bg : I \times \real^{n_x}  \rightarrow \real^{n_g}, \qquad
	\bff : I \times  \real^{n_x} \times \real^{n_g} \rightarrow \real^{n_x} 
\]
and $x_0 \in \real^{n_x}$, we seek the solution $\bx_* \in \WW^{n_x}$
of the initial value problem (IVP)
\begin{equation} \label{eq:problem_formulation:IVP_true}
	\bx_*'(t) = \bff\Big( t, \bx_*(t), \bg_*\big( t, \bx_*(t) \big) \Big) \quad \mbox{for almost all (a.a.) } t \in I, \qquad
	\bx(t_0) = x_0.
\end{equation}
The solution $\bx_*$ of \eqref{eq:problem_formulation:IVP_true} is also referred to as the state.
The function $\bff$ represents the dynamics of the system, which depend on a component function $\bg_*$ that is expensive to evaluate. 
For example, in flight simulation, $\bg_*(t, x)$ may represent the aerodynamic coefficients of an aircraft at the time $t \in I$ and flight state $x \in \real^{n_x}$ computed by expensive CFD simulations. Thus, instead of solving \eqref{eq:problem_formulation:IVP_true} directly using the expensive $\bg_*$, we solve
\begin{equation} \label{eq:problem_formulation:IVP}
	\bx'(t) = \bff\Big( t, \bx(t), \bg\big( t, \bx(t) \big) \Big), \quad \aall t \in I, \qquad
	\bx(t_0) = x_0,
\end{equation}
where $\bg \approx \bg_*$ is a sample-based surrogate model for the component function. If this approximation is good (in some appropriate sense), then the resulting state $\bx$ approximates $\bx_*$. The focus of our paper is the development of an adaptive, sensitivity-driven strategy to obtain surrogates $\bg \approx \bg_*$ that yield strong approximations $\bx \approx \bx_*$ with a small number of expensive $\bg_*$ queries.

For given functions 
$\phi : \real^{n_x} \rightarrow \real$ and  $\ell : I \times  \real^{n_x} \times \real^{n_g} \rightarrow \real$,
we are also interested in computing a quantity of interest
\begin{subequations}    \label{eq:problem_formulation:QoI-ODE}
\begin{equation} \label{eq:problem_formulation:QoI-ODE-g-only}
	\widetilde{q}(\bg) := q\big(\bx(\cdot \, ; \bg), \bg \big),
\end{equation}
where $\bx(\cdot \, ; \bg)$ is the solution of \eqref{eq:problem_formulation:IVP} with the given function $\bg$, and
\begin{equation}     \label{eq:problem_formulation:QoI-ODE-b}
	q(\bx, \bg) := \phi\big(\bx(t_f)\big) + \int_{t_0}^{t_f} \ell \Big( t, \bx(t), \bg\big( t, \bx(t) \big) \Big) \, dt.
\end{equation}
\end{subequations}

The sensitivity results in our work are easily extensible to QoIs involving evaluations of the state at any discrete times $t_i \in \overline{I}$ through suitable modifications to the associated adjoint equations, but we consider the case of final time only for ease of presentation.

\subsection{Optimization Problem}   \label{sec:problem_formulation_optimization}
Let  $I := (t_0, t_f)$. Given functions
\begin{align*}
	\varphi &: \real^{n_x} \times \real^{n_p} \rightarrow \real, &
	l &: I \times  \real^{n_x} \times \real^{n_u} \times \real^{n_p} \times \real^{n_g} \rightarrow \real, \\
	\bff &: I \times  \real^{n_x} \times \real^{n_u} \times \real^{n_p} \times \real^{n_g} \rightarrow \real^{n_x}, &
		\bg &: I \times \real^{n_x} \times \real^{n_u} \times \real^{n_p} \rightarrow \real^{n_g},
\end{align*}
and $x_0 \in \real^{n_x}$, we consider the problem of finding the optimal 
state $\bx_* \in\big( W^{1,\infty}(I) \big)^{n_x}$, control $ \bu_* \in  \big( L^\infty(I) \big)^{n_u}$, and parameter $ \bp_* \in \real^{n_p}$
for the optimal control problem
\begin{equation} \label{eq:Sensitivity:OCP:OCP_true}
\begin{aligned}
	\min_{\bx, \bu, \bp} \quad & \varphi\big(\bx(t_f), \bp \big) + \int_{t_0}^{t_f} l\Big(t, \bx(t), \bu(t), \bp, \bg_* \big( t, \bx(t), \bu(t), \bp \big) \Big) \, dt \\
	\mbox{s.t.} \quad 
	& \bx'(t) = \bff \Big(t, \bx(t), \bu(t), \bp, \, \bg_* \big(t, \bx(t), \bu(t), \bp \big) \Big), \quad \aall t \in I, \\
	& \bx(t_0) = x_0.
\end{aligned}
\end{equation}
We slightly abuse notation here by using dynamics $\bff$ and component function $\bg_*$ as in the previous section, but with additional dependencies on controls $\bu$ and parameters $\bp$.

Similarly to the approach outlined in the previous section, we opt to solve
\begin{equation} \label{eq:Sensitivity:OCP:OCP}
\begin{aligned}
	\min_{\bx, \bu, \bp} \quad & \varphi\big(\bx(t_f), \bp \big) + \int_{t_0}^{t_f} l\Big(t, \bx(t), \bu(t), \bp, \bg \big( t, \bx(t), \bu(t), \bp \big) \Big) \, dt \\
	\mbox{s.t.} \quad 
	& \bx'(t) = \bff \Big(t, \bx(t), \bu(t), \bp, \, \bg \big(t, \bx(t), \bu(t), \bp \big) \Big), \quad \aall t \in I, \\
	& \bx(t_0) = x_0
\end{aligned}
\end{equation}
with an adaptive sample-based surrogate $\bg \approx \bg_*$ to obtain an approximate solution to \eqref{eq:Sensitivity:OCP:OCP_true}.
The costate $\blambda \in \WW^{n_x}$ associated with the constraints of \eqref{eq:Sensitivity:OCP:OCP} appears in the optimality conditions of \eqref{eq:Sensitivity:OCP:OCP}, which will be important for the construction of our sensitivity-based acquisition function.

Given functions
$\phi : \real^{n_x} \times \real^{n_p} \rightarrow \real$ and
$\ell : I \times  \real^{n_x} \times \real^{n_u} \times \real^{n_p} \times \real^{n_g} \rightarrow \real$,
we are also interested in a QoI
\begin{subequations}   \label{eq:problem_formulation:QoI-OCP}
\begin{equation} \label{eq:problem_formulation:QoI-OCP-g-only}
	\widetilde{q}(\bg) := q\big(\bx(\cdot \, ; \bg), \bu(\cdot \, ; \bg), \bp(\bg), \bg \big),
\end{equation}
where $\big(\bx(\cdot \, ; \bg), \bu(\cdot \, ; \bg), \bp(\bg)\big)$ solves \eqref{eq:Sensitivity:OCP:OCP} with the model $\bg$, and
\begin{equation} \label{eq:problem_formulation:QoI-OCP-b}
	q(\bx, \bu, \bp, \bg) := \phi\big(\bx(t_f), \bp\big) + \int_{t_0}^{t_f} \ell \Big(t, \bx(t), \bu(t), \bp, \, \bg \big(t, \bx(t), \bu(t), \bp \big) \Big) \, dt.
\end{equation}
\end{subequations}
\section{Sensitivity Analysis} \label{sec:sensitivity}    \label{sec:sensitivity-ODE-summary}
Our surrogate model adaptation uses the sensitivities of the solution of \eqref{eq:problem_formulation:IVP} 
or of \eqref{eq:Sensitivity:OCP:OCP} with respect to changes in the model function $\bg$.  
The most notable difference between traditional sensitivity analysis---which considers perturbations of the solution with respect to parameters---and our sensitivity analysis
is that in our setting, the solution $\bx(\cdot \, ; \bg)$ itself enters as an argument into the component function $\bg$.
In this section we will review the  results from \cite{JRCangelosi_MHeinkenschloss_2025c} and \cite{JRCangelosi_MHeinkenschloss_2025a} 
on the computation of sensitivities of the solution of \eqref{eq:problem_formulation:IVP} or of \eqref{eq:Sensitivity:OCP:OCP} 
with respect to the model function $\bg$. 

To define the function space for $\bg$ used in the simulation and optimization setting,  we use the notation
\begin{subequations}    \label{eq:y-value-def}
\begin{align}
	y &= x \in \real^{n_y} := \real^{n_x}, &&\mbox{ (simulation)}, \\ 	
	y &= (x, u, p) \in \real^{n_y} :=  \real^{n_x} \times \real^{n_u} \times \real^{n_p}, &&\mbox{ (optimization)}.
\end{align}
\end{subequations}
The computation of the sensitivities is based on the implicit function theorem and assumes functions $\bg$ in the space
\begin{subequations} \label{eq:Sensitivity:caratheodory:g-space}
\begin{align}
	\cG^s := \big\{ \bg &: I \times \real^{n_y} \rightarrow \real^{n_g} \; : \; 
	                    \bg( t, y ) \mbox{ is $s$-times continuously partially }   \nonumber \\
	                   & \mbox{differentiable  with respect to } y \in \real^{n_y}  \mbox{ for } \aall t \in I,   \nonumber \\
                           &   \mbox{is measurable in $t$ for each $y \in \real^{n_y}$, and }  
                             \| \bg \|_{\cG^s}  < \infty \big\},
\end{align}
for $s = 2$ (simulation) and $s = 3$ (optimization), where
\begin{equation} \label{eq:Sensitivity:caratheodory:g-space-norm}
	\| \bg \|_{\cG^s} := \sum_{n=0}^s \; \esssup_{t \in I} \; \sup_{y \in \real^{n_y}} \left\| \frac{\partial^n}{\partial y^n} \bg(t, y) \right\|.
\end{equation}
\end{subequations}

In \cref{sec:sensitivity-ODE-summary} and \cref{sec:sensitivity-OCP-summary} we will summarize
the sensitivity results for the IVP solution and the OCP solution with respect to $\bg$.
The technical details and precise statements of these results will be given in
\cref{sec:sensitivity-ODE} and \cref{sec:sensitivity-OCP}. 
The technical details presented in the latter two technical subsections are important for the foundation,
but are not needed for the application of the sensitivity results in  \cref{sec:surrogates}-\cref{sec:numerics}.

\subsection{Summary of IVP Solution Sensitivity}   \label{sec:sensitivity-ODE-summary}
In this section we summarize the results on the sensitivity of the solution $\bx(\bg)$ of \eqref{eq:problem_formulation:IVP} and of the QoI \eqref{eq:problem_formulation:QoI-ODE}.
The precise statements of assumptions needed for these results to hold, as well as the relevant theorems for the sensitivity analysis,
will be given in \cref{sec:sensitivity-ODE}.

Let $\overline{\bg} \in\cG^2$ and let $\overline{\bx} \in \WW^{n_x}$ be the corresponding solution of \eqref{eq:problem_formulation:IVP}.
Under suitable conditions on the problem, which will be stated later in
\cref{thm:Sensitivity:ODE:sensitivity-result-ODE}, 
there exist neighborhoods $\cN(\overline{\bg}) \subset\cG^2$ 
and $\cN(\overline{\bx}) \subset \big( W^{1,\infty}(I) \big)^{n_x}$
and a unique map $\bx : \cN(\overline{\bg}) \rightarrow \cN(\overline{\bx})$ satisfying $\bx(\overline{\bg}) = \overline{\bx}$
such that  $\bx(\cdot \, ; \bg) \in \WW^{n_x}$ is the solution of the initial value problem \eqref{eq:problem_formulation:IVP} 
with $\bg \in \cN(\overline{\bx})$.
Moreover,  $\cN(\overline{\bg}) \ni \bg \mapsto \bx(\cdot \, ; \bg) \in \WW^{n_x}$  is continuously Fr\'echet 
differentiable with respect to $\bg \in \cN(\overline{\bg})$, and the Fr\'echet  derivative 
$\delta \bx := \bx_\bg(\overline{\bg}) \delta \bg$ applied to $\delta \bg$
is given by the solution of the linear initial value problem
\begin{equation}    \label{eq:Sensitivity:ODE:x-sensitivity}
	\delta \bx'(t) = \overline{\BA}[t] \delta \bx(t) + \overline{\BB}[t] \delta \bg\big(t, \overline{\bx}(t) \big), \quad \aall t \in I, \qquad
	\delta \bx(t_0) = 0,
\end{equation}
where $\overline{\bx} := \bx(\cdot \, ; \overline{\bg})$ and $\overline{\BA} \in \LL^{n_x \times n_x}$, $\overline{\BB} \in \LL^{n_x \times n_g}$ 
are given by
\begin{equation} \label{eq:Sensitivity:ODE:shorthand}
	\begin{aligned}
	  \overline{\BA}[\cdot] &:= \bff_x\Big( \cdot, \overline{\bx}(\cdot), \overline{\bg}\big(\cdot, \overline{\bx}(\cdot) \big) \Big)
	                          + \bff_g\Big( \cdot, \overline{\bx}(\cdot), \overline{\bg}\big(\cdot, \overline{\bx}(\cdot) \big) \Big) \overline{\bg}_x \big(\cdot, \overline{\bx}(\cdot) \big), \\
	\overline{\BB}[\cdot]  &:= \bff_g\Big( \cdot, \overline{\bx}(\cdot), \overline{\bg}\big(\cdot, \overline{\bx}(\cdot) \big) \Big).
	\end{aligned}
\end{equation}

The sensitivity of the QoI \eqref{eq:problem_formulation:QoI-ODE} with respect to $\bg$ follows from
the Fr\'echet differentiability of $\bg \mapsto \bx(\cdot \, ; \bg)$ and the chain rule,
and can be computed using an adjoint equation. This is
summarized in the following result. 
In addition to \eqref{eq:Sensitivity:ODE:shorthand}, we use the shorthand
\begin{equation} \label{eq:Sensitivity:ODE:shorthand-2}
	\overline{\ell}[\cdot] := \ell \big( \cdot, \overline{\bx}(\cdot), \overline{\bg} \big( \cdot, \overline{\bx}(\cdot) \big) \big), \qquad 
	\overline{\phi}[t_f] := \phi\big( \overline{\bx}(t_f) \big).
\end{equation}

Under suitable conditions on the problem, which will be made precise in 
\cref{thm:Sensitivity:ODE:QoI-sensitivity}, the QoI $\cN(\overline{\bg}) \ni \bg \mapsto \widetilde{q}(\bg) \in \real$
defined in \eqref{eq:problem_formulation:QoI-ODE} is continuously Fr\'echet 
differentiable with Fr\'echet derivative given by
\begin{equation} \label{eq:Sensitivity:ODE:sensitivity_result_qoi_adjoint}
    	\widetilde{q}_\bg(\overline{\bg}) \delta \bg = \int_{t_0}^{t_f} \big( \overline{\BB}[t]^T \overline{\blambda}(t) + \nabla_g \overline{\ell}[t] \big)^T \delta \bg \big( t, \overline{\bx}(t) \big) \, dt,
\end{equation}
where $\overline{\blambda} \in \WW^{n_x}$ solves the adjoint equation
 \begin{equation} \label{eq:Sensitivity:ODE:x-adjoint}
	\begin{aligned}
	       - \overline{\blambda}'(t) &= \overline{\BA}[t]^T \overline{\blambda}(t) + \nabla_x \overline{\ell}[t]  
	                                                + \overline{\bg}_x\big(t, \overline{\bx}(t) \big)^T \nabla_g \overline{\ell}[t], \quad \aall t \in I, \\
		\overline{\blambda}(t_f) &= \nabla_x \overline{\phi}[t_f].
       \end{aligned}
\end{equation}

The advantage of the adjoint-based approach is that at the cost of one linear ODE solve \eqref{eq:Sensitivity:ODE:x-adjoint}, 
it allows one to compute the QoI sensitivity for any $\delta \bg$ by simply applying the linear operator \eqref{eq:Sensitivity:ODE:sensitivity_result_qoi_adjoint}, which is cheaper than solving a linear IVP \eqref{eq:Sensitivity:ODE:x-sensitivity} for every $\delta \bg$.

\subsection{Summary of OCP Solution Sensitivity}   \label{sec:sensitivity-OCP-summary}
In this section we summarize the results on the sensitivity of the solution of the OCP
\eqref{eq:Sensitivity:OCP:OCP} and of the QoI \eqref{eq:problem_formulation:QoI-OCP}, obtained via a post-optimality sensitivity analysis.
The precise statement of assumptions needed for these results to hold, as well as the relevant theorems on sensitivity analysis,
will be given in \cref{sec:sensitivity-OCP}.

The optimization variables  are combined in $\by = (\bx, \bu, \bp)$
and the optimization variables with the costate $\blambda$ associated with \eqref{eq:Sensitivity:OCP:OCP}
are combined in $\bz = (\bx, \bu, \bp, \blambda)$.
The functions $\by$ and $\bz$ (resp.) belong to the Banach spaces
\begin{equation} \label{eq:Sensitivity:OCP:function-spaces}
\begin{aligned}
	\cY^\infty &:= \WW^{n_x} \times \LL^{n_u} \times \real^{n_p}, \\
	\cZ^\infty &:= \WW^{n_x} \times \LL^{n_u} \times \real^{n_p} \times \WW^{n_x}.
\end{aligned}
\end{equation}

To state the sensitivity results, we will use  the shorthand
\begin{equation} \label{eq:Sensitivity:OCP:shorthand-1}
    \begin{aligned}
         \overline{\bff}[\cdot] &:= \bff\big( \cdot, \overline{\bx}(\cdot), \overline{\bu}(\cdot), \overline{\bp}, \, \overline{\bg} \big( \cdot, \overline{\bx}(\cdot), \overline{\bu}(\cdot), \overline{\bp} \big) \big), & \qquad
          \overline{\bg}[\cdot] &:= \overline{\bg} \big( \cdot, \overline{\bx}(\cdot), \overline{\bu}(\cdot), \overline{\bp} \big), \\
    	\overline{l}[\cdot] &:=  l\big( \cdot, \overline{\bx}(\cdot), \overline{\bu}(\cdot), \overline{\bp}, \, \overline{\bg} \big( \cdot, \overline{\bx}(\cdot), \overline{\bu}(\cdot), \overline{\bp} \big) \big), & \qquad
    	\overline{\varphi}[t_f] &:= \varphi( \overline{\bx}(t_f), \bp ),
    \end{aligned}
\end{equation}
and corresponding shorthand for partial derivatives.
Moreover, we will use the Hamiltonian
\begin{subequations}     \label{eq:Sensitivity:OCP:shorthand}
\begin{equation}
		\overline{\BH}[\cdot] := \overline{l}[\cdot] + \overline{\blambda}(\cdot)^T \overline{\bff}[\cdot]
\end{equation}
and its total derivatives
\begin{align}
	\overline{\BH}_{z_1 z_2}[\cdot] 
	&= \nabla_{z_1 z_2}^2 \overline{\BH}[\cdot] + \nabla_{z_1 g}^2 \overline{\BH}[\cdot] \overline{\bg}_{z_2}[\cdot] + (\nabla_{z_1 g}^2 \overline{\BH}[\cdot] \overline{\bg}_{z_2}[\cdot])^T \\ &\quad + \overline{\bg}_{z_1}[\cdot]^T \nabla_{gg}^2 \overline{\BH}[\cdot] \overline{\bg}_{z_2}[\cdot], & z_1, z_2 \in \{ x, u, p \}, \nonumber \\
	\overline{\BH}_{z_1 g}[\cdot]
	&= \nabla_{z_1 g}^2 \overline{\BH}[\cdot] + \overline{\bg}_{z_1}[\cdot]^T \nabla_{gg}^2 \overline{\BH}[\cdot], & z_1 \in \{ x, u, p \},
\end{align}
as well as
\begin{equation}
\begin{aligned}
        \overline{\BA}[\cdot] &= \overline{\bff}_x[\cdot] + \overline{\bff}_g[\cdot] \overline{\bg}_x[\cdot], &
	\overline{\BB}[\cdot] &= \overline{\bff}_u[\cdot] + \overline{\bff}_g[\cdot] \overline{\bg}_u[\cdot], \\
	\overline{\BC}[\cdot] &= \overline{\bff}_p[\cdot] + \overline{\bff}_g[\cdot] \overline{\bg}_p[\cdot], &
	\overline{\bd}[\cdot] &= \overline{\bff}_g[\cdot]^T \overline{\blambda}(\cdot).
\end{aligned}
\end{equation}
\end{subequations}

Let  $(\overline{\bx}, \overline{\bu}, \overline{\bp}) \in \cY^\infty$ be
a local minimum of \eqref{eq:Sensitivity:OCP:OCP} with $\bg = \overline{\bg} \in \cG^3$ 
and let $\overline{\blambda} \in \WW^{n_x}$ be the corresponding costate.
Under suitable conditions on \eqref{eq:Sensitivity:OCP:OCP}, which will be stated rigorously in
\cref{thm:OCP-sensitivity}, 
there exist neighborhoods
    $\cN(\overline{\bg}) \subset \cG^3$,
    $\cN(\overline{\bx}) \subset \big( W^{1,\infty}(I) \big)^{n_x}$,
    $\cN(\overline{\bu}) \subset \big( L^\infty(I) \big)^{n_u}$,
    $\cN(\overline{\bp}) \subset \real^{n_p}$,
    $\cN(\overline{\blambda}) \subset \big( W^{1,\infty}(I) \big)^{n_x}$
and unique mappings
\begin{align*}
		\bx : \cN(\overline{\bg}) \rightarrow \cN(\overline{\bx}), \quad
		\bu : \cN(\overline{\bg}) \rightarrow \cN(\overline{\bu}), \quad
		\bp : \cN(\overline{\bg}) \rightarrow \cN(\overline{\bp}), \quad
		\blambda : \cN(\overline{\bg}) \rightarrow \cN(\overline{\blambda})
\end{align*}
satisfying $\bx(\overline{\bg}) = \overline{\bx}$,
		$\bu(\overline{\bg}) = \overline{\bu}$,
		$\bp(\overline{\bg}) = \overline{\bp}$,
		$\blambda(\overline{\bg}) = \overline{\blambda}$, 
such that $\big(\bx(\bg), \bu(\bg), \bp(\bg)\big)$ is a local solution of \eqref{eq:Sensitivity:OCP:OCP} with
 $\bg \in \cN(\overline{\bg})$ and $\blambda(\bg)$ is the corresponding costate.
Moreover, the mapping 
$ \cN(\overline{\bg}) \ni \bg \mapsto \bz(\bg) := \big(\bx(\bg), \bu(\bg), \bp(\bg), \blambda(\bg) \big)
\in \cZ^\infty $ is continuously Fr\'echet differentiable at $\overline{\bg}$.
The $\delta \bx, \delta \bu, \delta \bp$ components of the 
Fr\'echet derivative
$\delta \bz = \bz_\bg(\overline{\bg}) \delta \bg = (\delta \bx, \delta \bu, \delta \bp, \delta \blambda) \in \cZ^\infty$
are the solution of the  linear quadratic optimal control  problem (LQOCP) 
\begin{equation}     \label{eq:Sensitivity:OCP:LQOCP}
    \begin{aligned}
    	\min_{\delta \bx, \delta \bu, \delta \bp} \quad & \int_{t_0}^{t_f} \begin{bmatrix} \bc_x(t) \\ \bc_u(t) \end{bmatrix}^T \begin{bmatrix} \delta \bx(t) \\ \delta \bu(t) \end{bmatrix} \, dt  \\
    	& + \frac{1}{2} \int_{t_0}^{t_f} \begin{bmatrix} \delta \bx(t) \\ \delta \bu(t) \\ \delta \bp \end{bmatrix}^T \begin{bmatrix} \overline{\BH}_{xx}[t] & \overline{\BH}_{xu}[t] & \overline{\BH}_{xp}[t] \\ \overline{\BH}_{ux}[t] & \overline{\BH}_{uu}[t] & \overline{\BH}_{up}[t] \\ \overline{\BH}_{px}[t] & \overline{\BH}_{pu}[t] & \overline{\BH}_{pp}[t] \end{bmatrix} \begin{bmatrix} \delta \bx(t) \\ \delta \bu(t) \\ \delta \bp \end{bmatrix} \, dt \\
    		& + \begin{bmatrix} \sigma_f \\ \gamma \end{bmatrix}^T \begin{bmatrix} \delta \bx(t_f) \\ \delta \bp \end{bmatrix} 
    		  + \frac{1}{2} \begin{bmatrix} \delta \bx(t_f) \\ \delta \bp \end{bmatrix}^T
    		            \begin{bmatrix}  \nabla_{xx}^2 \overline{\varphi}[t_f] &  \nabla_{xp}^2 \overline{\varphi}[t_f] \\ 
    		                                      \nabla_{px}^2 \overline{\varphi}[t_f] &  \nabla_{pp}^2 \overline{\varphi}[t_f] 
    		           \end{bmatrix} 
    		           \begin{bmatrix} \delta \bx(t_f) \\ \delta \bp \end{bmatrix} \\[1ex]
            \mbox{s.t.} \quad
    	& \delta \bx'(t) = \overline{\BA}[t] \delta \bx(t) + \overline{\BB}[t] \delta \bu(t) + \overline{\BC}[t] \delta \bp + \br(t), \qquad \aall t \in I, \\
    	& \delta \bx(t_0) = r_0
    \end{aligned}
\end{equation}
 with
 \begin{subequations}   \label{eq:Sensitivity:OCP:ocp-solution-sensitivity-system-linear} 
	 \begin{align}
		\br(t) &= \overline{\bff}_g[t] \delta \bg[t],  &
		 \begin{bmatrix} \bc_x(t) \\ \bc_u(t) \end{bmatrix} 
		 &= \begin{bmatrix} \overline{\BH}_{xg}[t] \delta \bg[t] + \delta \bg_x[t]^T \overline{\bd}[t] \\  
		                             \overline{\BH}_{ug}[t] \delta \bg[t] + \delta \bg_u[t]^T \overline{\bd}[t]
		        \end{bmatrix}, \\
		r_0 &= \sigma_f = 0, &
		 \gamma &= \int_{t_0}^{t_f}   \overline{\BH}_{pg}[t] \delta \bg[t] + \delta \bg_p[t]^T \overline{\bd}[t] \, dt,
	 \end{align}
\end{subequations}
and the $\delta \blambda$ component is the corresponding adjoint.     
The necessary and sufficient optimality conditions for the LQOCP \eqref{eq:Sensitivity:OCP:LQOCP}
are stated in \cite[Eqn.~(2.19)]{JRCangelosi_MHeinkenschloss_2025a}.

Next, consider the sensitivity of the QoI \eqref{eq:problem_formulation:QoI-OCP}.
As in the previous section, 
the continuous Fr\'echet differentiablity of the QoI \eqref{eq:problem_formulation:QoI-OCP-g-only} 
with respect to $\bg \in \cG^3$ follows from the OCP solution differentiability and the chain rule.
Once again, the Fr\'echet derivative of the QoI  can be computed using adjoints.
We continue to use the shorthands \eqref{eq:Sensitivity:OCP:shorthand-1} and \eqref{eq:Sensitivity:OCP:shorthand}.

Under suitable assumptions on \eqref{eq:Sensitivity:OCP:OCP} and \eqref{eq:problem_formulation:QoI-OCP}, 
which will be made precise in \cref{thm:Sensitivity:OCP:sensitivity_result_qoi-adjoint},
the map $ \cN(\overline{\bg}) \ni \bg \mapsto  \widetilde{q}(\bg)$ given by 
 \eqref{eq:problem_formulation:QoI-OCP-g-only} is continuously Fr\'echet differentiable 
 and the Fr\'echet derivative is	
 \begin{align} \label{eq:Sensitivity:OCP:sensitivity_result_qoi_adjoint}
    	& \widetilde{q}_\bg(\overline{\bg}) \delta \bg  \nonumber \\ 
	&=  \int_{t_0}^{t_f}  \overline{\bh}[t]^T  \delta \bg[t]  
                                             +  \overline{\bd}[t]^T \delta \bg_x[t] \widetilde{\delta \bx}(t) 
                                             +  \overline{\bd}[t]^T \delta \bg_u[t] \widetilde{\delta \bu}(t) 
                                             +   \overline{\bd}[t]^T \delta \bg_p[t] \widetilde{\delta \bp}  \, dt,
 \end{align}
 where 
  \[
               \overline{\bh}[t] 
              := \overline{\BH}_{gx}[t]  \widetilde{\delta \bx}(t) 
                                                  + \overline{\BH}_{gu}[t]  \widetilde{\delta \bu}(t)
                                                  + \overline{\BH}_{gp}[t]  \widetilde{\delta \bp}
                                                  + \overline{\bff}_{g}[t]^T \widetilde{\delta \blambda}(t) 
                                                  +   \nabla_g \overline{\ell}[t], 
  \]
          \sloppy
 where
 $(\widetilde{\delta \bx}, \; \widetilde{\delta \bu}, \; \widetilde{\delta \bp}) \in \cY^\infty$
solves the LQOCP \eqref{eq:Sensitivity:OCP:LQOCP} with $(\delta \bx, \delta \bu, \delta \bp)$ 
replaced by $(\widetilde{\delta \bx}, \widetilde{\delta \bu}, \widetilde{\delta \bp})$ and using
\begin{align*}
		\br(t) &\equiv 0,  &
		 \begin{bmatrix} \bc_x(t) \\ \bc_u(t) \end{bmatrix} 
		 &= \begin{bmatrix}   \nabla_x \overline{\ell}[t] + \overline{\bg}_x[t]^T \nabla_g \overline{\ell}[t] \\  
		                      \nabla_u \overline{\ell}[t] + \overline{\bg}_u[t]^T \nabla_g \overline{\ell}[t]
		        \end{bmatrix}, \\
		r_0 &= 0, \; \sigma_f = \nabla_x \overline{\phi}[t_f], &
		 \gamma &= \nabla_p \overline{\phi}[t_f] + \int_{t_0}^{t_f} \nabla_p \overline{\ell}[t] + \overline{\bg}_p[t]^T \nabla_g \overline{\ell}[t] \, dt,
 \end{align*}
and where $\widetilde{\delta \blambda} \in \WW^{n_x}$ is the corresponding costate.
   
 The necessary and sufficient optimality conditions for the LQOCP that needs to be solved to
 compute   $(\widetilde{\delta \bx}, \; \widetilde{\delta \bu}, \; \widetilde{\delta \bp}, \; \widetilde{\delta \blambda}) \in \cZ^\infty$
 are stated in \cite[Eqn.~(2.24)]{JRCangelosi_MHeinkenschloss_2025a}.
The advantage of the adjoint-based approach is that at the cost of solving one LQOCP, it allows one to compute the QoI sensitivity for any $\delta \bg$ by simply applying the linear operator \eqref{eq:Sensitivity:OCP:sensitivity_result_qoi_adjoint}, which is much cheaper than solving the LQOCP \eqref{eq:Sensitivity:OCP:LQOCP} for each $\delta \bg$.

\begin{remark}
    Our post-optimality sensitivity analysis relies on the continuous optimality/KKT conditions.
    In practice, discretization of the optimal control problem is solved using a nonlinear optimization
    solver that will satisfy the KKT conditions for a discretized problem up to solver tolerance. 
    As a result, the computed sensitivities inherit errors due to the discretization error and the solver tolerances.
    However, the computed sensitivities can be interpreted as the sensitivities of a perturbed problem, with
    perturbations related to discretization and solver errors. As these errors converge to zero, the 
    computed sensitivities converge to the true sensitivities. 
    When we apply these sensitivity results in \cref{sec:numerics}, we use a higher-order collocation
    discretization for the optimal control problem, with proven convergence as the discretization is refined,
    and we use a second-order optimization code with relatively fine solver tolerances. 
 \end{remark}

\subsection{Sensitivity of IVP Solution}  \label{sec:sensitivity-ODE}
This section provides the technical details for and the precise statements of the sensitivity results
summarized in \cref{sec:sensitivity-ODE-summary}. 
The main results on the sensitivity of the solution $\bx(\bg)$ of \eqref{eq:problem_formulation:IVP} and of the QoI \eqref{eq:problem_formulation:QoI-ODE}
with respect to $\bg$ are stated in \cref{thm:Sensitivity:ODE:sensitivity-result-ODE,thm:Sensitivity:ODE:QoI-sensitivity} below.
These results are obtained by applying the implicit function theorem to an operator equation derived from \eqref{eq:problem_formulation:IVP}.
We begin by specifying the setting needed to apply the implicit function theorem.

The following assumption on $\bff$ ensures existence and uniqueness of solutions to the IVP \eqref{eq:problem_formulation:IVP} in $\WW^{n_x}$,
which is required for the sensitivity of the solution to be well-defined.
\begin{assumption} \label{as:Sensitivity:caratheodory:IVP_unique_solution}
	Let the function $\bff : I \times \real^{n_x} \times \real^{n_g} \rightarrow \real^{n_x}$ satisfy the following:
   \begin{itemize}
   \item[(i)] The function $\bff$ is measurable in $t \in I$ for each $x \in \real^{n_x}$ and $g  \in \real^{n_g}$, 
   	it is continuous in $x \in \real^{n_x}$ and $g \in \real^{n_g}$ for a.a.\  $t \in I$,
      and there exists a function $m_f \in L^\infty(I)$ such that 
      $\| \bff( t, x, g ) \| \le m_f(t) (1 + \| g \|)$ for $\aall t \in I$ and all $x \in \real^{n_x}, g \in \real^{n_g}$.
      
   \item[(ii)]   \sloppy 
        There exists an integrable function $l_f : I \rightarrow \real$ such that 
                    $\| \bff( t, x_1, g_1 ) -  \bff( t, x_2, g_2 )  \| \le l_f(t) ( \| x_1 - x_2  \| + \| g_1 - g_2  \| )$
                    for $\aall t \in I$ and all $x_1, x_2 \in \real^{n_x}, g_1, g_2 \in \real^{n_g}$.
\end{itemize}
\end{assumption}

\begin{theorem} \label{thm:Sensitivity:caratheodory:IVP_unique_solution}
    If Assumption~\ref{as:Sensitivity:caratheodory:IVP_unique_solution}  holds, then for any $\bg \in\cG^2$
     the IVP \eqref{eq:problem_formulation:IVP} has a unique solution $\bx(\cdot \, ; \bg) \in \WW^{n_x}$.
\end{theorem}
\begin{proof}
This result is a corollary of \cite[Thm.~2.2]{JRCangelosi_MHeinkenschloss_2025c}.
\end{proof}

The continuous Fr\'echet differentiability of the solution mapping
$\cG^2 \ni \bg \mapsto \bx(\cdot \, ; \bg) \in \WW^{n_x}$ follows from the application of the
implicit function theorem to the operator equation 
\begin{equation} \label{eq:Sensitivity:psi-ODE}
	\bpsi(\bx, \bg) 
	            = \begin{bmatrix} \BF(\bx, \bg )  -  \bx' \\  \bx(t_0) - x_0 \end{bmatrix} = 0,
\end{equation}
where $\bpsi :  \WW\times\cG^2  \rightarrow  \LL \times \real^{n_x}$ and
\[
	\WW^{n_x} \times\cG^2 \ni (\bx, \bg) \mapsto 
	\BF(\bx, \bg) := \bff \big( \cdot, \bx(\cdot), \bg \big( \cdot, \bx(\cdot) \big) \big) \in \LL^{n_x}.
\]

The operator $\BF$ is a Nemytskii operator, also called a superposition operator.
To study the continuous Fr\'echet differentiability of the QoI \eqref{eq:problem_formulation:QoI-ODE-g-only} we also need the Nemytskii operator
\[
	\WW^{n_x} \times\cG^2 \ni (\bx, \bg) \mapsto \BL(\bx, \bg) := \ell \big( \cdot, \bx(\cdot), \bg \big( \cdot, \bx(\cdot) \big) \big) \in L^\infty(I).
\]

The next result establishes the continuous Fr\'echet differentiability of Nemytskii operators in the form of $\BF$ and $\BL$. 
Continuous Fr\'echet differentiability of general nonlinear Nemytskii operators of this form can be proven in $L^\infty$ 
spaces, e.g.,  \cite[Sec.~4.3]{FTroeltzsch_2010a}. The assumptions made in the following theorem are consistent with those outlined in \cite[Lemma~4.13]{FTroeltzsch_2010a}.
\begin{theorem} \label{thm:Sensitivity:nemytskii:nemytskii-derivative}
	Let $\bphi : I \times \real^{n_x} \times \real^{n_g} \rightarrow \real^{n_\phi}$ be a given function such that $\bphi(t, x, g)$ is measurable in $t \in I$ for all $x \in \real^{n_x}$, $g \in \real^{n_g}$ and is continuously partially differentiable with respect to $x \in \real^{n_x}$ and $g  \in \real^{n_g}$ for a.a.\  $t \in I$.
	Moreover, assume that the partial derivatives of $\bphi$ satisfy the following conditions:
   \begin{itemize}                   
   \item[(i)] \emph{Boundedness:}  There exists $K$ such that $ \| \bphi_x( t, 0, 0 )  \| \le K$ and 
                    $ \| \bphi_g( t, 0, 0 )  \| \le K$  for a.a.\  $t \in I$.
                   
   \item[(ii)]  \emph{Local Lipschitz continuity:} 
       \sloppy  For all $R > 0$ there exists $L(R)$ such that
    \begin{align*}   
    	&\| \bphi_x(t, x_1, g_1) - \bphi_x(t, x_2, g_2) \| + \| \bphi_g(t, x_1, g_1) - \bphi_g(t, x_2, g_2) \| \\
         &\leq L(R) ( \| x_1 - x_2 \| + \| g_1 - g_2 \| ), \quad  \aall  t \in I \mbox{ and  } \\
         & \hspace*{30ex}  \mbox{ all } x_1, x_2 \in \cB_R(0), \; g_1, g_2 \in \cB_R(0).
    \end{align*}
    \end{itemize}
	Then the Nemytskii operator $\BPhi : \LL^{n_x} \times\cG^2 \rightarrow \LL^{n_\phi}$ given by
	\begin{equation} \label{eq:Sensitivity:nemytskii:nemytskii-operator}
		\BPhi(\bx, \bg) := \bphi \Big( \cdot, \bx(\cdot), \bg \big( \cdot, \bx(\cdot) \big) \Big)
	\end{equation}
	is continuously Fr\'echet differentiable, and its derivative is given by
	\begin{equation} \label{eq:Sensitivity:nemytskii:Frechet_deriv}
	\begin{aligned}
      	& [\BPhi'(\bx, \bg) (\delta \bx, \delta \bg)](t)   \\
	&= \Big[ \bphi_x\big( t, \bx(t), \bg\big(t, \bx(t) \big) \big) 
	              + \bphi_g\big( t, \bx(t), \bg\big(t, \bx(t) \big) \big) \bg_x\big(t, \bx(t) \big) \Big] \delta \bx(t)    \\ 
	 &\quad + \bphi_g\big( t, \bx(t), \bg\big( t,  \bx(t) \big) \big) \delta \bg\big(t,  \bx(t) \big).
    \end{aligned}
    \end{equation}
\end{theorem}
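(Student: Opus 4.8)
The plan is to establish the three standard ingredients for continuous Fr\'echet differentiability of the Nemytskii operator $\BPhi$: (a) that the linear map $(\delta\bx,\delta\bg)\mapsto[\BPhi'(\bx,\bg)(\delta\bx,\delta\bg)]$ defined by the right-hand side of \eqref{eq:Sensitivity:nemytskii:Frechet_deriv} is a bounded operator from $\LL^{n_x}\times\big(\cG^2(I)\big)^{n_g}$ into $\LL^{n_\phi}$; (b) that this map is the Fr\'echet derivative of $\BPhi$ at $(\bx,\bg)$; and (c) that $(\bx,\bg)\mapsto\BPhi'(\bx,\bg)$ is continuous in the operator norm. A recurring observation is that, since $\bx\in\LL^{n_x}$ and $\|\bg\|_{(\cG^2(I))^{n_g}}<\infty$ (which in particular bounds $\esssup_{t\in I}\sup_{y}\|\bg(t,y)\|$), the arguments $\bx(t)$ and $\bg(t,\bx(t))$ lie in a fixed ball $\cB_R(0)$ for a.a.\ $t\in I$, and for perturbations with $\|\delta\bx\|_{\LL^{n_x}}+\|\delta\bg\|_{(\cG^2(I))^{n_g}}\le1$ the perturbed arguments remain in a slightly larger fixed ball $\cB_{R'}(0)$; this lets the local Lipschitz bound~(ii) be used with a single constant $L(R')$ throughout. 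Measurability of every composed function follows from the Carath\'eodory structure of $\bphi$ and $\bg$ together with measurability of $\bx$, as in the setting of \cref{thm:Sensitivity:caratheodory:IVP_unique_solution}, so that $\BPhi$ and $\BPhi'$ indeed take values in the stated $L^\infty$ spaces. Boundedness in~(a) is then immediate: assumptions~(i)--(ii) give $\|\bphi_x(t,x,g)\|,\|\bphi_g(t,x,g)\|\le K+2R\,L(R)$ on $\cB_R(0)\times\cB_R(0)$, while $\|\bg_x(t,\bx(t))\|\le\|\bg\|_{(\cG^2(I))^{n_g}}$ and $\|\delta\bg(t,\bx(t))\|\le\|\delta\bg\|_{(\cG^2(I))^{n_g}}$, so each term on the right of \eqref{eq:Sensitivity:nemytskii:Frechet_deriv} is bounded in $L^\infty(I)$ by a constant times $\|\delta\bx\|_{\LL^{n_x}}+\|\delta\bg\|_{(\cG^2(I))^{n_g}}$.

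For the derivative identity~(b) I would expand in two stages. First, since $\bg(t,\cdot)$ is $C^2$ with first and second $y$-derivatives bounded (uniformly in $t$) by $\|\bg\|_{(\cG^2(I))^{n_g}}$, Taylor's theorem in $y$ gives, pointwise in $t$,
\[
\bg\big(t,\bx(t)+\delta\bx(t)\big)=\bg\big(t,\bx(t)\big)+\bg_x\big(t,\bx(t)\big)\delta\bx(t)+\br_1(t),\qquad\|\br_1\|_{L^\infty(I)}\le\tfrac12\|\bg\|_{(\cG^2(I))^{n_g}}\|\delta\bx\|_{\LL^{n_x}}^2,
\]
and, bounding the $y$-derivative of $\delta\bg$ by $\|\delta\bg\|_{(\cG^2(I))^{n_g}}$, one gets $\delta\bg(t,\bx(t)+\delta\bx(t))=\delta\bg(t,\bx(t))+\br_2(t)$ with $\|\br_2\|_{L^\infty(I)}\le\|\delta\bg\|_{(\cG^2(I))^{n_g}}\|\delta\bx\|_{\LL^{n_x}}$. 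Hence the argument entering the last slot of $\bphi$ equals $\bg(t,\bx(t))+\Delta g(t)+\br(t)$, where $\Delta g(t):=\bg_x(t,\bx(t))\delta\bx(t)+\delta\bg(t,\bx(t))$ has $\|\Delta g\|_{L^\infty(I)}=O\big(\|\delta\bx\|_{\LL^{n_x}}+\|\delta\bg\|_{(\cG^2(I))^{n_g}}\big)$ and $\br:=\br_1+\br_2$ is of second order. Second, apply Taylor's theorem to $\bphi$ in its last two arguments around $\big(\bx(t),\bg(t,\bx(t))\big)$ with integral remainder, the increments being $\delta\bx(t)$ and $\Delta g(t)+\br(t)$; by the local Lipschitz continuity~(ii) of $\bphi_x,\bphi_g$ on the fixed ball, this integral remainder is bounded in $L^\infty(I)$ by $L(R')$ times $\big(\|\delta\bx\|_{\LL^{n_x}}+\|\Delta g\|_{L^\infty(I)}+\|\br\|_{L^\infty(I)}\big)^2=o\big(\|\delta\bx\|_{\LL^{n_x}}+\|\delta\bg\|_{(\cG^2(I))^{n_g}}\big)$. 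Collecting the surviving first-order terms yields $[\bphi_x+\bphi_g\bg_x]\delta\bx+\bphi_g\,\delta\bg(\cdot,\bx(\cdot))$, which is precisely $[\BPhi'(\bx,\bg)(\delta\bx,\delta\bg)]$ in \eqref{eq:Sensitivity:nemytskii:Frechet_deriv}; the leftover terms $\bphi_g\,\br$ and the integral remainder are $o\big(\|\delta\bx\|_{\LL^{n_x}}+\|\delta\bg\|_{(\cG^2(I))^{n_g}}\big)$ in $L^\infty(I)$, so $\BPhi$ is Fr\'echet differentiable at $(\bx,\bg)$ with derivative \eqref{eq:Sensitivity:nemytskii:Frechet_deriv}.

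For the continuity~(c) of $(\bx,\bg)\mapsto\BPhi'(\bx,\bg)$ I would estimate each coefficient in \eqref{eq:Sensitivity:nemytskii:Frechet_deriv}: by~(ii), $\bphi_x(t,\bx_1(t),\bg_1(t,\bx_1(t)))-\bphi_x(t,\bx_2(t),\bg_2(t,\bx_2(t)))$ is bounded by $L(R')\big(\|\bx_1(t)-\bx_2(t)\|+\|\bg_1(t,\bx_1(t))-\bg_2(t,\bx_2(t))\|\big)$, and $\|\bg_1(t,\bx_1(t))-\bg_2(t,\bx_2(t))\|\le\|\bg_1\|_{(\cG^2(I))^{n_g}}\|\bx_1-\bx_2\|_{\LL^{n_x}}+\|\bg_1-\bg_2\|_{(\cG^2(I))^{n_g}}$ since $\bg_1(t,\cdot)$ is Lipschitz with constant $\|\bg_1\|_{(\cG^2(I))^{n_g}}$; the same reasoning applies to $\bphi_g$, while $\bg_x(t,\bx(t))$ and $\delta\bg(t,\bx(t))$ depend on $(\bx,\bg)$ in a Lipschitz way by the $\cG^2$-bounds on the second $y$-derivative of $\bg$ and on the $y$-derivative of $\delta\bg$. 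Combining these estimates gives $\|\BPhi'(\bx_1,\bg_1)-\BPhi'(\bx_2,\bg_2)\|\to0$ as $(\bx_1,\bg_1)\to(\bx_2,\bg_2)$, which would complete the proof. I expect the main obstacle to be the bookkeeping in step~(b): one must carefully separate the genuine first-order contribution from the cross terms generated by composing the $y$-Taylor expansions of the inner maps $\bg$ and $\delta\bg$ with the $(x,g)$-Taylor expansion of $\bphi$, and confirm that all intermediate evaluation points stay in one fixed ball so that~(ii) applies with a single Lipschitz constant.
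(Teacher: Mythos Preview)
The paper does not give a self-contained proof of this theorem; it simply cites \cite[Thm.~2.6]{JRCangelosi_MHeinkenschloss_2024a}. Your proposal supplies a direct argument along the standard lines for Nemytskii operators (bounded candidate derivative, first-order Taylor expansion with Lipschitz-controlled remainder, continuity of the coefficient functions), and the steps you outline are correct. In particular, your two-stage expansion in~(b)---first linearizing the inner composition $\bg(t,\bx(t)+\delta\bx(t))$ and $\delta\bg(t,\bx(t)+\delta\bx(t))$ using the $\cG^2$-bounds, then applying the integral mean-value form to $\bphi$ in $(x,g)$ and controlling the remainder via~(ii)---is exactly the right decomposition, and your observation that all evaluation points stay in a fixed ball $\cB_{R'}(0)$ so that a single Lipschitz constant suffices is the key bookkeeping point. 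Since the paper offers no proof to compare against, there is nothing further to say beyond noting that your argument is what one expects the cited proof to contain.
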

\begin{proof}
See \cite[Thm.~2.5]{JRCangelosi_MHeinkenschloss_2025c}.
\end{proof}

The following result gives the sensitivity of the solution of \eqref{eq:problem_formulation:IVP} to perturbations in $\bg$.

\begin{theorem} \label{thm:Sensitivity:ODE:sensitivity-result-ODE}
	Suppose $\bff : I \times \real^{n_x} \times \real^{n_g} \rightarrow \real^{n_x}$ satisfies Assumption~\ref{as:Sensitivity:caratheodory:IVP_unique_solution} 
	and the assumptions of Theorem~\ref{thm:Sensitivity:nemytskii:nemytskii-derivative} with $\bphi = \bff$. 
	If $\overline{\bg} \in\cG^2$ is given and $\overline{\bx} \in \WW^{n_x}$ is the corresponding solution of \eqref{eq:problem_formulation:IVP},
	then there exist neighborhoods $\cN(\overline{\bg}) \subset\cG^2$ and $\cN(\overline{\bx}) \subset \big( W^{1,\infty}(I) \big)^{n_x}$
	and a unique map $\bx : \cN(\overline{\bg}) \rightarrow \cN(\overline{\bx})$ satisfying $\bx(\overline{\bg}) = \overline{\bx}$
	and $\bpsi\big( \bx(\bg), \bg \big) = 0$ for all $\bg \in \cN(\overline{\bg})$, where $\bpsi$ is defined in \eqref{eq:Sensitivity:psi-ODE}.
	
	Moreover,  the unique solution $\bx(\cdot \, ; \bg) \in \WW^{n_x}$ of the initial value problem \eqref{eq:problem_formulation:IVP} is continuously Fr\'echet 
	differentiable with respect to $\bg \in \cN(\overline{\bg})$, and the Fr\'echet  derivative $\delta \bx := \bx_\bg(\overline{\bg}) \delta \bg$ applied to $\delta \bg$
	 is given by the solution of the linear initial value problem~\eqref{eq:Sensitivity:ODE:x-sensitivity}.
\end{theorem}
\begin{proof}
	See \cite[Thm.~2.10]{JRCangelosi_MHeinkenschloss_2025c}.
\end{proof}

The sensitivity of the QoI \eqref{eq:problem_formulation:QoI-ODE} with respect to $\bg$ follows from
\cref{thm:Sensitivity:ODE:sensitivity-result-ODE} and the chain rule, and can be computed using an adjoint equation. This is
summarized in the following result. 
In addition to \eqref{eq:Sensitivity:ODE:shorthand}, we use the shorthand
\begin{equation} \label{eq:Sensitivity:ODE:shorthand-2}
	\overline{\ell}[\cdot] := \ell \big( \cdot, \overline{\bx}(\cdot), \overline{\bg} \big( \cdot, \overline{\bx}(\cdot) \big) \big), \qquad 
	\overline{\phi}[t_f] := \phi\big( \overline{\bx}(t_f) \big).
\end{equation}

\begin{theorem} \label{thm:Sensitivity:ODE:QoI-sensitivity}
	Let the assumptions of \cref{thm:Sensitivity:ODE:sensitivity-result-ODE} hold. 
	If $\phi$ is continuously differentiable and $\bphi = \ell$ satisfies the assumptions of  
	\cref{thm:Sensitivity:nemytskii:nemytskii-derivative},
    then the QoI $\cN(\overline{\bg}) \ni \bg \mapsto \widetilde{q}(\bg) \in \real$
      defined in \eqref{eq:problem_formulation:QoI-ODE} is continuously Fr\'echet 
     differentiable with Fr\'echet derivative given by \eqref{eq:Sensitivity:ODE:sensitivity_result_qoi_adjoint}
\end{theorem}
\begin{proof}
	See \cite[Thm.~2.13]{JRCangelosi_MHeinkenschloss_2025c}.
\end{proof}

\subsection{Sensitivity of OCP Solution}   \label{sec:sensitivity-OCP}
This section provides the technical details for and the precise statements of the sensitivity results
summarized in \cref{sec:sensitivity-OCP-summary}. 
As before, the optimization variables are combined in $\by = (\bx, \bu, \bp)$
and  $\bz = (\bx, \bu, \bp, \blambda)$, where $\blambda$ is the costate associated with \eqref{eq:Sensitivity:OCP:OCP}. 
The functions $\by$ and $\bz$ (resp.) belong to the Banach spaces \eqref{eq:Sensitivity:OCP:function-spaces}.
We begin by specifying the setting needed to state the optimality conditions for \eqref{eq:Sensitivity:OCP:OCP} and to apply the implicit function theorem.

The following smoothness assumption is needed to obtain the necessary optimality conditions for \eqref{eq:Sensitivity:OCP:OCP} 
and is adapted from \cite[Assumption~2.2.8]{MGerdts_2012a}. 
\begin{assumption} \label{as:Sensitivity:OCP:smoothness}
      Let the functions $\varphi$, $l$, $\bff$, $\bg$ in \eqref{eq:Sensitivity:OCP:OCP} satisfy the following:
	\begin{itemize}
		\item[(i)] $\varphi$ is continuously differentiable.
		
		\item[(ii)] The mappings
		     $t \mapsto l \big(t, y, \bg(t, y) \big)$ and
		     $ t \mapsto \bff\big(t, y, \bg(t, y) \big)$
		are measurable on $I$ for all $y \in \real^{n_y}$.
		
		\item[(iii)] The mappings
		     $y \mapsto l\big(t, y, \bg(t, y)\big)$ and
		     $ y \mapsto \bff\big(t, y, \bg(t, y)\big)$
		are uniformly continuously differentiable on $\real^{n_y}$ for all $t \in I$.
		
		\item [(iv)] \sloppy The first-order (total) derivatives
		     $(t, y) \mapsto \frac{d}{dy} l\big(t, y, \bg(t, y)\big)$ and
		     $(t, y) \mapsto \frac{d}{dy}\bff\big(t, y, \bg(t, y) \big)$
		are bounded in $I \times \real^{n_y}$.
	\end{itemize}
\end{assumption}

The first-order necessary optimality conditions for \eqref{eq:Sensitivity:OCP:OCP} are given in the following theorem. 
See \cite{JRCangelosi_MHeinkenschloss_2025a} for more details on the optimality conditions.

\begin{theorem} \label{thm:Sensitivity:OCP:NOC}
  Let \cref{as:Sensitivity:OCP:smoothness} hold with $\bg = \overline{\bg}$.
  If  $(\overline{\bx}, \overline{\bu}, \overline{\bp}) \in \cY^\infty$
  is a local minimum of \eqref{eq:Sensitivity:OCP:OCP} with $\bg = \overline{\bg}$, then
  there exists $\overline{\blambda} \in \big(W^{1,\infty}(I)\big)^{n_x}$ such that 
    \begin{align*}
    		\overline{\bx}'(t) &= \overline{\bff}[t], \hspace*{50ex}  \aall t \in (t_0, t_f), \\ 
    	        \overline{\bx}(t_0) &= x_0, \\ 
    		\overline{\blambda}'(t) &= - \big(\overline{\bff}_x[t] + \overline{\bff}_g[t] \overline{\bg}_x[t] \big)^T \overline{\blambda}(t) - (\nabla_x \overline{l}[t] + \overline{\bg}_x[t]^T \nabla_g \overline{l}[t]), \hspace*{3ex} \aall t \in (t_0, t_f), \\  
    		\overline{\blambda}(t_f) &= \nabla_x \overline{\varphi}[t_f],\\ 
    		0 &=\big(\overline{\bff}_u[t] + \overline{\bff}_g[t] \overline{\bg}_u[t] \big)^T \overline{\blambda}(t) + (\nabla_u \overline{l}[t] + \overline{\bg}_u[t]^T \nabla_g \overline{l}[t]), \hspace*{5ex} \aall t \in (t_0, t_f), \\  
    		0 &= \nabla_p \overline{\varphi}[t_f] 
		          + \int_{t_0}^{t_f} \big(\overline{\bff}_p[t] + \overline{\bff}_g[t] \overline{\bg}_p[t] \big)^T \overline{\blambda}(t) 
		          + \nabla_p \overline{l}[t] + \overline{\bg}_p[t]^T \nabla_g \overline{l}[t] \, dt,
    \end{align*}
    using the shorthand \eqref{eq:Sensitivity:OCP:shorthand-1}
    and corresponding shorthand for partial derivatives.
\end{theorem}
\begin{proof}
	This result follows from \cite[Thm.~3.1.11]{MGerdts_2012a}.
\end{proof}

The optimality conditions of \cref{thm:Sensitivity:OCP:NOC} are considered as an operator equation to which the
implicit function theorem is applied to obtain sensitivities.
We continue to use the shorthand notation \eqref{eq:Sensitivity:OCP:shorthand-1}.
If we define the operator
\begin{subequations}     \label{eq:Sensitivity:OCP:KKT-operator}
\begin{equation}
		\bpsi : \cZ^\infty \times \cG^3 \rightarrow \cV^\infty
\end{equation}  
with $\cV^\infty := \LL^{n_x} \times \real^{n_x} \times \LL^{n_x} \times \real^{n_x} \times \LL^{n_u} \times \real^{n_p}$ and
\begin{align}
    	& \bpsi(\overline{\bx}, \overline{\bu}, \overline{\bp}, \overline{\blambda}; \overline{\bg})    \nonumber \\
	&:= \begin{bmatrix} 
    		\overline{\bff}[\cdot] - \overline{\bx}'(\cdot) \\[0.5ex] 
    		x_0 - \overline{\bx}(t_0)  \\[0.5ex] 
    		\overline{\blambda}'(\cdot) + \big(\overline{\bff}_x[\cdot] 
		+ \overline{\bff}_g[\cdot] \overline{\bg}_x[\cdot] \big)^T \overline{\blambda}(\cdot) 
		+ \big(\nabla_x \overline{l}[\cdot] + \overline{\bg}_x[\cdot]^T \nabla_g \overline{l}[\cdot]\big)  \\[0.5ex] 
    		\nabla_x \overline{\varphi}[t_f] - \overline{\blambda}(t_f)  \\[0.5ex] 
    		\big(\overline{\bff}_u[\cdot] + \overline{\bff}_g[\cdot] \overline{\bg}_u[\cdot] \big)^T \overline{\blambda}(\cdot) 
		+ \big(\nabla_u \overline{l}[\cdot] + \overline{\bg}_u[\cdot]^T \nabla_g \overline{l}[\cdot]\big)  \\[0.5ex] 
    		 \nabla_p \overline{\varphi}[t_f] + \int_{t_0}^{t_f} \Big(\big(\overline{\bff}_p[t] 
		  + \overline{\bff}_g[t] \overline{\bg}_p[t] \big)^T \overline{\blambda}(t) + \big(\nabla_p \overline{l}[t] 
		  + \overline{\bg}_p[t]^T \nabla_g \overline{l}[t]\big) \Big) \, dt
		  \end{bmatrix},
    \end{align}
 \end{subequations}
the operator equation associated with the first-order necessary optimality conditions of \cref{thm:Sensitivity:OCP:NOC} 
is given by
\begin{equation}     \label{eq:Sensitivity:OCP:KKT-operator-eq}
    	\bpsi(\overline{\bz}; \overline{\bg})  = \bpsi(\overline{\bx}, \overline{\bu}, \overline{\bp}, \overline{\blambda}; \overline{\bg})  = 0.
\end{equation}

The next assumption ensures the continuous Fr\'echet differentiability of the operator $\bpsi$, which is required for the implicit function theorem.
\begin{assumption} \label{as:Sensitivity:OCP:smoothness-strong}
	 In addition to the properties listed in \cref{as:Sensitivity:OCP:smoothness}, let the following properties hold
	  for the functions $\varphi$, $l$, and $\bff$  in \eqref{eq:Sensitivity:OCP:OCP}:
	\begin{itemize}
		\item[(i)] $\varphi$ is twice continuously differentiable.
		
		\item[(ii)] There exists $K$ such that 
					$\big\| \frac{\partial^s}{\partial (x, u, p, g)^s} \bff(t, 0, 0, 0, 0) \big\| \leq K$ for $\aall t \in I$
					and all $s \in \{ 0, 1, 2 \}$.

		\item[(iii)] For all $R > 0$ there exists $L(R)$ such that for all $s \in \{ 0, 1, 2 \}$ it holds
		\begin{align*}
			&\left\| \frac{\partial^s}{\partial (x, u, p, g)^s} \bff(t, x_1, u_1, p_1, g_1) 
			         - \frac{\partial^s}{\partial (x, u, p, g)^s} \bff(t, x_2, u_2, p_2, g_2) \right\| \\ 
		        &\leq L(R) \| (x_1, u_1, p_1, g_1) - (x_2, u_2, p_2, g_2) \|, \quad
		           \aall t \in I \mbox{ and }  \\
		        & \hspace*{35ex} \mbox{ all } (x_i, u_i, p_i, g_i) \in \cB_R(0), \quad i = 1, 2.
		\end{align*}
		
		\item[(iv)] Properties (ii) and (iii) also hold for $l$ (\emph{mutatis mutandis}).
	\end{itemize}
\end{assumption}

The existence of a continuous inverse $\bpsi_{\bz}(\overline{\bz}; \overline{\bg})^{-1}$ is linked to the existence and uniqueness of 
solutions to a particular linear quadratic optimal control problem (see \eqref{eq:Sensitivity:OCP:LQOCP} below) whose solution gives the sensitivity of the solution of \eqref{eq:Sensitivity:OCP:OCP} to perturbations in $\bg$. 
We will use the notation \eqref{eq:Sensitivity:OCP:shorthand}.

The second-order sufficient optimality condition is stated next.
\begin{assumption} \label{as:Sensitivity:OCP:SSOC}
	The matrix
	\[
		     \nabla^2 \overline{\varphi}[t_f] 
		      :=    \begin{bmatrix}  \nabla_{xx}^2 \overline{\varphi}[t_f] &  \nabla_{xp}^2 \overline{\varphi}[t_f] \\ 
		                                      \nabla_{px}^2 \overline{\varphi}[t_f] &  \nabla_{pp}^2 \overline{\varphi}[t_f] 
		           \end{bmatrix} 
	\]
	is symmetric positive semidefinite, there exists $\epsilon > 0$ such that the matrix
	\[
	       \overline{\BH}_{yy}[t] 
		:= \begin{bmatrix} 
		\overline{\BH}_{xx}[t] & \overline{\BH}_{xu}[t] & \overline{\BH}_{xp}[t] \\ 
		\overline{\BH}_{ux}[t] & \overline{\BH}_{uu}[t] & \overline{\BH}_{up}[t] \\ 
		\overline{\BH}_{px}[t] & \overline{\BH}_{pu}[t] & \overline{\BH}_{pp}[t] 
		\end{bmatrix}
	\]
	obeys
	\begin{align*}
		&\int_{t_0}^{t_f}  \delta \by(t)^T \overline{\BH}_{yy}[t] \, \delta \by(t) \, dt 
		\geq \epsilon \, \| \delta \by \|_{( L^2(I) )^{n_x} \times ( L^2(I) )^{n_u} \times \real^{n_p}}^2
	\end{align*}
	for all $\delta \by = (\delta \bx, \delta \bu, \delta \bp) \in \big( W^{1, 2}(I) \big)^{n_x} \times \big( L^2(I) \big)^{n_u} \times \real^{n_p}$ satisfying
	\[
		\delta \bx'(t) = \overline{\BA}[t] \delta \bx(t) + \overline{\BB}[t] \delta \bu(t) + \overline{\BC}[t] \delta \bp, \quad \aall t \in I, \qquad
	\delta \bx(t_0) = 0,
	\]	
	the matrix $\overline{\BH}_{uu}[t]^{-1}$ exists for a.a.\  $t \in I$, and $\overline{\BH}_{uu}[\cdot]^{-1}$ is essentially bounded.
\end{assumption}

Finally, we state the main sensitivity result, which may be found in \cite[Thm.~2.17]{JRCangelosi_MHeinkenschloss_2025a}.
\begin{theorem} \label{thm:OCP-sensitivity}
     Let Assumption~\ref{as:Sensitivity:OCP:smoothness-strong} hold, and let $\bphi = l$ satisfy the assumptions of Theorem~\ref{thm:Sensitivity:nemytskii:nemytskii-derivative}.
      If $(\overline{\bx}, \overline{\bu}, \overline{\bp}) \in \cY^\infty$ is a local minimum of \eqref{eq:Sensitivity:OCP:OCP} with costate $\overline{\blambda} \in \WW^{n_x}$ and 
      model $\overline{\bg} \in \cG^3$ that satisfies Assumption~\ref{as:Sensitivity:OCP:SSOC},
      then there exist neighborhoods
    $\cN(\overline{\bg}) \subset \cG^3$,
    $\cN(\overline{\bx}) \subset \big( W^{1,\infty}(I) \big)^{n_x}$,
    $\cN(\overline{\bu}) \subset \big( L^\infty(I) \big)^{n_u}$,
    $\cN(\overline{\bp}) \subset \real^{n_p}$,
    $\cN(\overline{\blambda}) \subset \big( W^{1,\infty}(I) \big)^{n_x}$
	and unique mappings
	\begin{align*}
		\bx : \cN(\overline{\bg}) \rightarrow \cN(\overline{\bx}), \quad
		\bu : \cN(\overline{\bg}) \rightarrow \cN(\overline{\bu}), \quad
		\bp : \cN(\overline{\bg}) \rightarrow \cN(\overline{\bp}), \quad
		\blambda : \cN(\overline{\bg}) \rightarrow \cN(\overline{\blambda})
	\end{align*}
	satisfying $\bx(\overline{\bg}) = \overline{\bx}$,
		$\bu(\overline{\bg}) = \overline{\bu}$,
		$\bp(\overline{\bg}) = \overline{\bp}$,
		$\blambda(\overline{\bg}) = \overline{\blambda}$, and
	\begin{align*}
		\bpsi\big( \bx(\bg), \bu(\bg), \bp(\bg), \blambda(\bg); \bg \big) = 0 \qquad \qquad \mbox{ for all } \; \bg \in \cN(\overline{\bg}).
	\end{align*}
	
	Moreover, the mapping $\bz(\bg) := \big(\bx(\bg), \bu(\bg), \bp(\bg), \blambda(\bg) \big)$ is continuously Fr\'echet differentiable at $\overline{\bg}$.
	The $\delta \bx, \delta \bu, \delta \bp$ components of the 
         Fr\'echet derivative
         $\delta \bz = \bz_\bg(\overline{\bg}) \delta \bg = (\delta \bx, \delta \bu, \delta \bp, \delta \blambda) \in \cZ^\infty$
         are the solution of the LQOCP \eqref{eq:Sensitivity:OCP:LQOCP}
         and the $\delta \blambda$ component is the corresponding adjoint.
 \end{theorem}        
 
Next, consider the sensitivity of the QoI \eqref{eq:problem_formulation:QoI-OCP} in the spaces
\begin{equation} \label{eq:Sensitivity:OCP:QoI-tilde}
	\widetilde{q} : \cG^3 \rightarrow \real, \qquad
	q :  \cY^\infty  \times \cG^3 \rightarrow \real.
\end{equation}
As in the previous section, the sensitivity of the QoI \eqref{eq:problem_formulation:QoI-OCP-g-only} with respect to $\bg \in \cG^3$ follows from
\cref{thm:OCP-sensitivity} and the chain rule, and can once again  be computed using adjoints.
This is summarized in the following result. 
We continue to use the shorthands \eqref{eq:Sensitivity:OCP:shorthand-1} and \eqref{eq:Sensitivity:OCP:shorthand}.

\begin{theorem} \label{thm:Sensitivity:OCP:sensitivity_result_qoi-adjoint}
	If the assumptions of \cref{thm:OCP-sensitivity} hold, $\bphi = \ell$ satisfies the 
	assumptions of \cref{thm:Sensitivity:nemytskii:nemytskii-derivative}, 
	and $\phi$ is continuously differentiable,   then 
	the map $ \cN(\overline{\bg}) \ni \bg \mapsto  \widetilde{q}(\bg)$ given by 
        \eqref{eq:problem_formulation:QoI-OCP-g-only} is continuously Fr\'echet differentiable 
        with  Fr\'echet derivative given by \eqref{eq:Sensitivity:OCP:sensitivity_result_qoi_adjoint}.
 \end{theorem}
\begin{proof}
  See  \cite[Thm.~2.19]{JRCangelosi_MHeinkenschloss_2025a}.
\end{proof}    

\section{Kernel-Based Surrogate Modeling} \label{sec:surrogates}
\sloppy
Another ingredient of our surrogate model adaptation is the construction of models $\bg$
for which certain pointwise error bounds are available. Interpolation in reproducing kernel Hilbert spaces 
(RKHSs) is one option for providing such models.
In this section, we review these models and highlight the properties leveraged in our surrogate model adaptation approach. Pointwise error bounds for kernel interpolants
allow us to make use of the fact that the inexactness in the component function at a given time depends on the current state, and that inexactness is propagated in time through sensitivity equations.

Throughout this section, we consider a subset $\Omega \subset \real^d$ and points $y \in \Omega$. 
Here we consider scalar-valued functions $\bg: \Omega \rightarrow \real$.
If $\bg$ is vector-valued, which is the case in our application, one can construct a separate surrogate model 
for each component of $\bg$, though one could also consider vector-valued kernels.

\subsection{Reproducing Kernel Hilbert Spaces}
We consider RKHSs of real-valued functions on $\Omega \subset \real^d$. 
For more general treatments see, e.g., 
\cite{ABerlinet_CThomas-Agnan_2004a}, \cite{VIPaulsen_MRaghupathi_2016a}, \cite[Sec.~4]{ISteinwart_AChristmann_2008a},
or \cite{HWendland_2004a}.

A symmetric kernel $\bk : \Omega \times \Omega \rightarrow \real$ is said to be positive definite if for all distinct $y_1, \dots, y_N \in \Omega$ the Gram matrix
\begin{equation} \label{eq:Surrogate:RKHS:kernel-matrix}
\bk(Y, Y) :=
	\begin{bmatrix} 
		\bk(y_1, y_1) & \cdots & \bk(y_1, y_N) \\
		\vdots & \ddots & \vdots \\
		\bk(y_N, y_1) & \cdots & \bk(y_N, y_N)
	\end{bmatrix}
\end{equation}
is symmetric positive semidefinite, and $\bk$ is said to be strictly positive definite if this matrix is symmetric positive definite.

Given a positive definite kernel $\bk$,  the unique RKHS with reproducing kernel $\bk$ is denoted $\cH_\bk(\Omega)$ and is
constructed by taking the completion of the pre-Hilbert space
$\cH_\bk^{\rm pre}(\Omega) := \set{ \sum_{i=1}^n a_i \bk(\cdot, y_i) }{  n \in \mathbb{N}, \, a_i \in \real, \, y_i \in \Omega  }$
with respect to the inner product 
\begin{equation} \label{eq:Surrogate:RKHS:RKHS-inner-product}
	\left\langle \sum_{i=1}^n a_i \bk(\cdot, y_i), \; \sum_{j=1}^m b_j \bk(\cdot, z_j) \right\rangle = \sum_{i=1}^n \sum_{j=1}^m a_i b_j \bk(y_i, z_j).
\end{equation}
 
 Models $\bg$ appearing in our simulation problem \eqref{eq:problem_formulation:IVP} or 
optimal control problem \eqref{eq:Sensitivity:OCP:OCP} need to be sufficiently smooth to enable
sensitivity analysis of the solution of these problems with respect to the model function, as discussed 
in \cref{sec:sensitivity}. 
Because these models will later be chosen from a RKHS $\cH_\bk(\Omega)$, we need results
about the smoothness of functions in $\cH_\bk(\Omega)$. These smoothness properties
are tied to the smoothness properties of the kernel $\bk$, as summarized in the next result.
We let $\alpha = (\alpha_1, \ldots, \alpha_d) \in \nat_0^d$ be a multi-index with $| \alpha | = \sum_{i=1}^d \alpha_i$,
and we use $D^\alpha = \partial_1^{\alpha_1} \ldots \partial_d^{\alpha_d}$ to denote the partial derivatives.
Moreover, for a kernel $\bk : \Omega \times \Omega \rightarrow \real$, $D_1^\alpha \bk(\cdot, \cdot)$ applies
partial derivatives with respect to the first argument and  $D_2^\alpha \bk(\cdot, \cdot)$ with respect to the second argument.

\begin{lemma} \label{lemma:Surrogate:RKHS:smoothness}
	Let $\bk : \Omega \times \Omega \rightarrow \real$ be a symmetric positive definite kernel. 
	If $D_1^\alpha D_2^\alpha \bk(\cdot, \cdot)$ exists and is continuous on $\Omega \times \Omega$ for all multi-indices $\alpha$ with $| \alpha | \leq s$,
	then the following hold:
	\begin{itemize}
		\item[(i)] All functions $\bg \in \cH_\bk(\Omega)$ are $s$-times continuously differentiable;
		
		\item[(ii)] $D_2^\alpha \bk(\cdot, y) \in \cH_\bk(\Omega)$ for all $y \in \Omega$ and all multi-indices $\alpha$ with $| \alpha | \leq s$;
		
		\item[(iii)] $D^\alpha \bg(y) = \langle \bg, D_2^\alpha \bk(\cdot, y) \rangle_{\cH_\bk(\Omega)}$ for all $\bg \in \cH_\bk(\Omega)$, all $y \in \Omega$, and all multi-indices $\alpha$ with $| \alpha | \leq s$.
	\end{itemize}
\end{lemma}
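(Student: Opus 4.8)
The plan is to prove (i)--(iii) simultaneously by induction on $s$ (equivalently, on $|\alpha|$), using the order-$(s-1)$ version of the representer identity (iii) as the engine that produces the order-$s$ conclusions. The base case $s=0$ is immediate: (ii) and (iii) are just the membership $\bk(\cdot,y)\in\cH_\bk(\Omega)$ and the reproducing property \eqref{eq:Surrogate:RKHS:reproducing-property}, while (i) is continuity of $\bg$, which follows from $|\bg(y)-\bg(z)|\le\|\bg\|_{\cH_\bk(\Omega)}\,\|\bk(\cdot,y)-\bk(\cdot,z)\|_{\cH_\bk(\Omega)}$ together with $\|\bk(\cdot,y)-\bk(\cdot,z)\|_{\cH_\bk(\Omega)}^2=\bk(y,y)-2\bk(y,z)+\bk(z,z)\to 0$ as $z\to y$, by continuity of $\bk$. (I treat $\Omega$ as open so that the partial derivatives are classical; the assumed continuity of the mixed derivatives also lets them commute, so the order of differentiation below is immaterial.)

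For the inductive step, assume (i)--(iii) hold for all multi-indices of order $\le s-1$, fix $\alpha$ with $|\alpha|=s$, and write $\alpha=\beta+e_j$ with $|\beta|=s-1$. Applying the order-$(s-1)$ version of (iii) to the test function $D_2^\gamma\bk(\cdot,b)$, which lies in $\cH_\bk(\Omega)$ by the order-$(s-1)$ version of (ii), and using symmetry of the inner product gives the identity $\langle D_2^\beta\bk(\cdot,a),\,D_2^\gamma\bk(\cdot,b)\rangle_{\cH_\bk(\Omega)}=D_1^\beta D_2^\gamma\bk(a,b)$ for $|\beta|,|\gamma|\le s-1$, which for $\beta=\gamma=0$ is \eqref{eq:Surrogate:RKHS:kernel-inner-product}. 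Now fix $y\in\Omega$ and, for small $h\ne 0$, set
\[
	q_h := \tfrac{1}{h}\big( D_2^\beta\bk(\cdot,\,y+he_j) - D_2^\beta\bk(\cdot,y) \big) \in \cH_\bk(\Omega).
\]
By the inner-product identity, $\langle q_h,q_{h'}\rangle_{\cH_\bk(\Omega)}$ is the mixed second difference quotient, in the coordinate-$j$ directions of its two arguments, of the function $\Phi:=D_1^\beta D_2^\beta\bk$ (which is continuous, since $|\beta|\le s$) at $(y,y)$. Because the hypothesis supplies the continuous ``diagonal'' derivatives $D_1^\gamma D_2^\gamma\bk$ for all $|\gamma|\le s$, a standard real-analysis lemma on mixed difference quotients yields $\langle q_h,q_{h'}\rangle_{\cH_\bk(\Omega)}\to D_1^\alpha D_2^\alpha\bk(y,y)$ as $(h,h')\to(0,0)$; in particular $\langle q_h,q_h\rangle$, $\langle q_{h'},q_{h'}\rangle$, and $\langle q_h,q_{h'}\rangle$ share this common limit, so $\|q_h-q_{h'}\|_{\cH_\bk(\Omega)}^2\to 0$. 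Hence $\{q_h\}$ is Cauchy as $h\to 0$ and converges in $\cH_\bk(\Omega)$ to some $\bk_y^\alpha$; since point evaluations are bounded and $q_h(z)\to D_2^\alpha\bk(z,y)$ pointwise, we get $\bk_y^\alpha=D_2^\alpha\bk(\cdot,y)$, which is (ii) at order $s$.

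Statements (iii) and (i) at order $s$ now follow quickly. For $\bg\in\cH_\bk(\Omega)$, continuity of the inner product and the order-$(s-1)$ version of (iii) give $\langle\bg,D_2^\alpha\bk(\cdot,y)\rangle_{\cH_\bk(\Omega)}=\lim_{h\to 0}\langle\bg,q_h\rangle=\lim_{h\to 0}\tfrac{1}{h}\big(D^\beta\bg(y+he_j)-D^\beta\bg(y)\big)$; existence of this limit at every $y$ says exactly that $D^\alpha\bg=\partial_j D^\beta\bg$ exists and equals $\langle\bg,D_2^\alpha\bk(\cdot,y)\rangle_{\cH_\bk(\Omega)}$, which is (iii). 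Continuity of $D^\alpha\bg$ then follows as in the base case: by (iii) at order $s$, Cauchy--Schwarz, and the inner-product identity (now valid at order $s$), $|D^\alpha\bg(y)-D^\alpha\bg(z)|\le\|\bg\|_{\cH_\bk(\Omega)}\,\|D_2^\alpha\bk(\cdot,y)-D_2^\alpha\bk(\cdot,z)\|_{\cH_\bk(\Omega)}$ with $\|D_2^\alpha\bk(\cdot,y)-D_2^\alpha\bk(\cdot,z)\|_{\cH_\bk(\Omega)}^2=D_1^\alpha D_2^\alpha\bk(y,y)-2D_1^\alpha D_2^\alpha\bk(y,z)+D_1^\alpha D_2^\alpha\bk(z,z)\to 0$ as $z\to y$. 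Since $\alpha$ with $|\alpha|=s$ was arbitrary, all partial derivatives of $\bg$ up to order $s$ exist and are continuous, so $\bg\in C^s(\Omega)$; this closes the induction.

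The step I expect to be the main obstacle is the limit $\langle q_h,q_{h'}\rangle\to D_1^\alpha D_2^\alpha\bk(y,y)$: since the hypothesis provides only the diagonal mixed derivatives $D_1^\gamma D_2^\gamma\bk$ and not all $D_1^\gamma D_2^\delta\bk$, one must argue carefully that the mixed second difference quotients of $\Phi=D_1^\beta D_2^\beta\bk$ converge --- writing such a difference as an iterated integral of $D_1^\alpha D_2^\alpha\bk$ and using only the existence of the intermediate derivative $D_1^\beta D_2^\alpha\bk$ (itself implied by the assumption that $D_1^\alpha D_2^\alpha\bk$ exists) together with continuity of $D_1^\alpha D_2^\alpha\bk$. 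Everything else is routine bookkeeping with the reproducing property and continuity of the inner product. The same statement, proved essentially along these lines, can be found in, e.g., \cite{HWendland_2004a} and \cite{ISteinwart_AChristmann_2008a}.
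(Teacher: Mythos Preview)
The paper does not actually prove this lemma; it simply cites Steinwart--Christmann (Cor.~4.36) for (i) and Wendland (Lemma~10.44, Thm.~10.45) for (ii) and (iii). Your inductive argument is essentially the proof that lives in those references (the diagonal-derivative formulation is Steinwart--Christmann's; Wendland in fact assumes the stronger $\bk\in C^{2s}(\Omega\times\Omega)$), so there is no competing approach to compare against --- you are supplying what the paper outsources. The sketch is correct, and you have correctly isolated the one genuinely delicate point: passing from the mixed second difference quotient of $D_1^\beta D_2^\beta\bk$ to $D_1^\alpha D_2^\alpha\bk(y,y)$ using only the diagonal hypothesis. Your proposed fix --- read ``$D_1^\alpha D_2^\alpha\bk$ exists'' as a classical iterated derivative so that the intermediate $D_1^\beta D_2^\alpha\bk$ is available, then express the mixed difference as an iterated integral of the continuous $D_1^\alpha D_2^\alpha\bk$ --- is exactly how the cited sources close this gap.
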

\begin{proof}
	See, e.g.,  \cite[Cor.~4.36]{ISteinwart_AChristmann_2008a} and \cite[Lemma~10.44, Thm.~10.45]{HWendland_2004a}.
\end{proof}

\subsection{Interpolation in Reproducing Kernel Hilbert Spaces}
Interpolation in RKHSs is studied in detail, e.g., in \cite[Ch.~8]{AIske_2018b}, \cite{VIPaulsen_MRaghupathi_2016a}, and \cite{HWendland_2004a}. We focus our discussion on the computation of the interpolant and pointwise error bounds.

Let $\cH_\bk(\Omega)$ be a RKHS with symmetric strictly positive definite kernel $\bk$.
Given points $Y = [y_1, \dots, y_N] \in \Omega^N$ and values $G = [g_1, \dots, g_N]^T \in \real^N$, consider the so-called optimal recovery problem
\[
	\min_{\bg \in \cH_\bk(\Omega)} \| \bg \|_{\cH_\bk(\Omega)} \qquad \mbox{s.t.} \quad \bg(y_i) = g_i, \qquad i = 1, \dots, N.
\]
The solution of this problem is the function in $\cH_\bk(\Omega)$ that interpolates $(Y, G)$ with the smallest RKHS norm, i.e., the minimum-norm interpolant, and is given by
\begin{align}   \label{eq:Surrogate:RKHS:kernel_interpolant_standard_form}
	\bg(y) = \bk(y, Y) \, \bk(Y, Y)^{-1} \, G, 
\end{align}
where $\bk(Y,Y)$ is the Gram matrix \eqref{eq:Surrogate:RKHS:kernel-matrix}
and $\bk(y, Y)$ is the row vector $[\bk(y, y_1) \ \cdots \ \bk(y, y_N)]$. Using \eqref{eq:Surrogate:RKHS:RKHS-inner-product}, the RKHS norm of the interpolant is 
\begin{equation}
	\| \bg \|_{\cH_\bk(\Omega)} = \sqrt{G^T \, \bk(Y, Y)^{-1} \, G}. \label{eq:Surrogate:RKHS:interpolant-RKHS-norm}
\end{equation}

In applications, the values $G$ often come from the evaluation of a function $\bg_*$ at the points $Y$, i.e., $G = [\bg_*(y_1), \dots, \bg_*(y_N)] \in \real^N$.
If $\bg_* \in \cH_\bk(\Omega)$, then the interpolant \eqref{eq:Surrogate:RKHS:kernel_interpolant_standard_form} is the orthogonal projection 
of $\bg_*$ onto the span of $\{ \bk(\cdot, y_i) \; | \; i = 1, \dots, N \}$ \cite[Thm.~13.1]{HWendland_2004a}.

The following result may be used to obtain a pointwise error bound for the partial derivatives of the kernel interpolant.
\begin{theorem} \label{thm:Surrogate:RKHS:derivative-error-bound}
    Let $\bk$ satisfy the assumptions of \cref{lemma:Surrogate:RKHS:smoothness}.
    If $\bg \in \cH_\bk(\Omega)$ is the kernel interpolant \eqref{eq:Surrogate:RKHS:kernel_interpolant_standard_form} 
    of $\bg_* \in \cH_\bk(\Omega)$ at the points $Y = [y_1, \dots, y_N] \in \Omega^N$ and values $G = [\bg_*(y_1), \dots, \bg_*(y_N)] \in \real^N$,
    then for any multi-index $\alpha$ with $|\alpha| \leq s$, the bound 
    \begin{equation}    \label{eq:Surrogate:RKHS:derivative-error-bound}
    	| D^\alpha \bg(y) - D^\alpha \bg_*(y)| \leq  \| \bg_* \|_{\cH_\bk(\Omega)} \, P_\bk^\alpha(y; Y), \quad \forall y \in \Omega
    \end{equation}
    holds, where the so-called power function $P_\bk^\alpha(y; Y)$ is given by
    \begin{align*}
    	P_\bk^\alpha(y; Y) 
    	&= \sqrt{D_1^\alpha D_2^\alpha \bk(y, y) - D_1^\alpha \bk(y, Y) \, \bk(Y, Y)^{-1} \, D_2^\alpha \bk(Y, y)}.
    \end{align*}
\end{theorem}
\begin{proof}
	The result follows from \cite[Thm.~11.4]{HWendland_2004a}.
\end{proof}
In the case $\alpha = (0, \dots, 0)$, \eqref{eq:Surrogate:RKHS:derivative-error-bound} reads
\begin{equation} \label{eq:Surrogate:RKHS:pointwise-term-0}
      | \bg(y) - \bg_*(y)| \leq  \| \bg_* \|_{\cH_\bk(\Omega)} \sqrt{\bk(y, y) - \bk(y, Y) \, \bk(Y, Y)^{-1} \, \bk(Y, y) } \quad \forall y \in \Omega.
\end{equation}
Note that $\bk(y, Y) \, \bk(Y, Y)^{-1} \, G$ in \eqref{eq:Surrogate:RKHS:kernel_interpolant_standard_form} and
$\bk(y, y) - \bk(y, Y) \, \bk(Y, Y)^{-1} \, \bk(Y, y)$ in \eqref{eq:Surrogate:RKHS:pointwise-term-0} are (resp.) equivalent to the mean
and the variance of a posterior Gaussian process conditioned on the points $Y$, which
further solidifies the connections between Gaussian process regression and kernel interpolation mentioned in~\cref{sec:intro}. 

Since $\| \bg_* \|_{\cH_\bk(\Omega)}$ is not known in practice, we introduce a computable estimate $\gamma > 0$ of $\| \bg_* \|_{\cH_\bk(\Omega)}$ (if no estimate is available, $\gamma = 1$), yielding
the following upper bound adapted from \eqref{eq:Surrogate:RKHS:derivative-error-bound}:
\begin{equation}    \label{eq:Surrogate:RKHS:derivative-error-bound1}
    	| D^\alpha \bg(y) - D^\alpha \bg_*(y)| \leq  \gamma^{-1}   \| \bg_* \|_{\cH_\bk(\Omega)} \,  \gamma P_\bk^\alpha(y; Y), \quad \forall y \in \Omega.
\end{equation}
Because $P_\bk^\alpha(y; Y)$ depends on  the kernel $\bk$, the points  $Y = [y_1, \dots, y_N] \in \Omega^N$, and $y \in \Omega$,
the part $\gamma P_\bk^\alpha(y; Y)$ of the upper bound \eqref{eq:Surrogate:RKHS:derivative-error-bound} can 
be evaluated efficiently.

The pointwise error bounds \eqref{eq:Surrogate:RKHS:derivative-error-bound1} involve two kernel-dependent terms 
$\| \bg_* \|_{\cH_\bk(\Omega)}$ and $P_\bk^\alpha(y; Y)$, and the choice of kernel impacts these bounds.
See, e.g., \cite{JLAkian_LBonnet_HOwhadi_ESavin_2022a} for the construction of kernels.
The fast approximate solution of large systems with kernel matrix $\bk(Y, Y)$ is described, e.g., in
\cite{FSchaefer_TJSullivan_HOwhadi_2021a} and \cite{YChen_HOwhadi_FSchaefer_2025a}.

\section{Model Refinement} \label{sec:model_refinement}
This section uses the sensitivity results of \cref{sec:sensitivity} to develop the surrogate model adaptation approach.
We begin by specifying surrogate models and their error bounds needed for our model adaptation approach.
Kernel interpolation from \cref{sec:surrogates} is one option to construct such surrogates; we will
describe how the theory of kernel interpolation fits into the general setting, and later, in \cref{sec:numerics},
we will use kernel interpolation in our numerical examples.
However, the presentation in this section is agnostic to the surrogate modeling technique used, 
as other tools besides kernel interpolation could be used, so long as they meet the 
requirements specified in \cref{sec:model_refinement_models_errors} of the model refinement approach.

\begin{remark} \label{rk:Refinement:intro:compatibility}
    In \cref{sec:sensitivity}, the functions $\bg$ were assumed to be functions of $(t, x) \in I \times \real^{n_x}$ (simulation) and 
    $(t, x, u, p) \in I \times \real^{n_x} \times \real^{n_u} \times \real^{n_p}$ (optimization), 
    while in \cref{sec:surrogates} the functions $\bg$ belonging to the RKHS $\cH_\bk(\Omega)$ were defined over a domain $\Omega \subset \real^d$.
    There are multiple ways to ensure compatibility between these two contexts.
    To focus our discussion and simplify the notation in this section, we assume that $\Omega = I \times \real^{n_x}$ (simulation) and 
    $\Omega = I \times \real^{n_x} \times \real^{n_u} \times \real^{n_p}$ (optimization), and that $\bg: \Omega \rightarrow \real^{n_g}$.
    Moreover, the shorthand $y = (t, x)$ (simulation) or $y = (t, x, u, p)$ (optimization) is used instead of $y = x$ (simulation) and $y = (x, u, p)$ (optimization) as in \cref{sec:sensitivity}.
\end{remark}

\subsection{Models and Error Bounds}   \label{sec:model_refinement_models_errors}

Let  $\bg_* : \Omega \rightarrow \real^{n_g}$ be the true component function that is expensive to evaluate. 
Given points $Y = [y_1, \dots, y_N] \in \Omega^N$ and function values $G = [\bg_*(y_1), \dots, \bg_*(y_N)] \in \real^{N \times n_g}$, 
let  $\bg(\cdot \, ; Y, G)$ be a surrogate of $\bg_*$.
Where appropriate, the notation $\bg(y; Y, G)$ will be used for surrogates of $\bg_*$ to emphasize the dependence of the function $\bg$ both on its input $y \in \Omega$ and the interpolation points $(Y, G)$ used to construct it, with similar notation for surrogate error bounds.

Suppose that $(Y_c, G_c)$ represents the current surrogate $\bg_c$, i.e.,
\[
	\bg_c(\cdot) = \bg(\cdot \, ; Y_c, G_c),
\]
with componentwise error bound
\begin{align}   \label{eq:Refinement:intro:error-c}
	\big| \bg_i(y; Y_c, G_c) - (\bg_*)_i(y) \big| &\leq c_i \; \bepsilon_i(y; Y_c, G_c), & i = 1, \dots, n_g
\end{align}
and constants $c_i$.
To establish a notion of model refinement, define the augmented data set
\begin{align*}
	& Y_+ := [y_1, \dots, y_N, y_+] \in \Omega^{N+1}, && G_+ := [\bg_*(y_1), \dots, \bg_*(y_N), \bg_*(y_+)] \in \real^{(N+1) \times n_g},
\end{align*}
so that $\bg_+(\cdot) = \bg(\cdot \, ; Y_+, G_+)$ is the surrogate obtained by adding the point \big($y_+$, $\bg_*(y_+)\big)$. 
To simplify the presentation, we add one point, but it is easily possible to add more than one point.
The error bound   \eqref{eq:Refinement:intro:error-c} with $Y_c$, $G_c$ replaced by $Y_+$, $G_+$ reads
\begin{align}    \label{eq:Refinement:intro:error-+}
	\big| \bg_i(y; Y_+, G_+) - (\bg_*)_i(y) \big| &\leq c_i \, \bepsilon_i(y; Y_+, G_+), & i = 1, \dots, n_g.
\end{align}
The error bound \eqref{eq:Refinement:intro:error-+} depends on $G_+$, which in turn depends on $\bg_*(y_+)$.
Because evaluating $\bg_*$ is expensive, we need to avoid computing $\bg_*(y_+)$ at trial points. Therefore,
we assume the availability of an error bound that is independent of $\bg_*(y_+)$, i.e., we assume an error bound of the type
\begin{align}  \label{eq:Refinement:intro:error-indicator}
	\big| \bg_i(y; Y_+, G_+) - (\bg_*)_i(y)  \big| &\leq  c_i \,  \bepsilon_i(y; Y_+, G_c), & i = 1, \dots, n_g.
\end{align}
In the optimization setting, we  also require a bound like \eqref{eq:Refinement:intro:error-indicator}   
for the partial derivatives, i.e, we assume that 
\begin{align} \label{eq:Refinement:intro:error-indicator-derivatives}
	\big| D^\alpha \bg_i(y; Y_+, G_+) - D^\alpha (\bg_*)_i(y) \big| \leq  c^\alpha_i \,  \bepsilon^\alpha_i(y; Y_+, G_c),
	 \quad |\alpha| \leq 1, \quad  i = 1, \dots, n_g.
\end{align}
We refer to \eqref{eq:Refinement:intro:error-indicator} and \eqref{eq:Refinement:intro:error-indicator-derivatives} as post-refinement error bounds.
Note that $D^\alpha$ denotes partial derivatives, whereas $c^\alpha_i$ and $\bepsilon^\alpha_i$ denote constants and pointwise terms that represent the error in the $D^\alpha$ derivatives of the interpolant; they are {\em not} the 
$D^\alpha$  derivatives of $c_i$ and $\bepsilon_i$  in \eqref{eq:Refinement:intro:error-indicator}.

Our surrogate model adaptation approach relies on the existence of post-refinement error bounds  \eqref{eq:Refinement:intro:error-indicator} (simulation)
and \eqref{eq:Refinement:intro:error-indicator-derivatives} (optimization) with functions $\bepsilon_i$, $i = 1, \ldots, n_g$, 
and $\bepsilon^\alpha_i$, $i = 1, \ldots, n_g$, $|\alpha| \leq 1$, that can be computed relatively inexpensively for points 
$Y_+ \in  \Omega^{N+1}$.
Before we continue with the discussion of our surrogate model adaptation approach, 
we comment on how kernel interpolation from \cref{sec:surrogates} 
can be used to construct  surrogate models $ \bg_i(\cdot \, ; Y_c, G_c)$ and $ \bg_i(\cdot \, ; Y_+, G_+)$.

To apply the sensitivity results of \cref{sec:sensitivity} to RKHS-based surrogates
$\bg \in \big(\cH_\bk(\Omega)\big)^{n_g}$,
we require the space $\big(\cH_\bk(\Omega)\big)^{n_g}$ to be contained in $\cG^s$ for $s = 2$ (simulation) or $s = 3$ (optimization). 
The following result ensures the continuous embedding 
of $\big(\cH_\bk(\Omega)\big)^{n_g}$ in $\cG^s$. It suffices to consider the case $n_g = 1$.

\begin{assumption} \label{as:surrogates:kernel-requirements}
	Let $\bk : \Omega \times \Omega \rightarrow \real$ be a symmetric positive definite kernel with the following properties:
		\begin{itemize}
		\item[(i)] $D_1^\alpha D_2^\alpha \bk(\cdot, \cdot)$ exists and is continuous on $\Omega \times \Omega$ for all multi-indices $\alpha$ with $| \alpha | \leq s$;
		
		\item[(ii)] The mapping $y \mapsto D_1^\alpha D_2^\alpha \bk(y, y)$ is bounded on $\Omega$ 
		               for all multi-indices $\alpha$ with $| \alpha | \leq s$.
	\end{itemize}
\end{assumption}
\begin{remark}
     In the case $n_g > 1$, the individual components of $\bg$ need not be defined by the same kernel, e.g., 
     they may have kernels with different hyperparameters or may have different kernels entirely, so long as 
     they individually satisfy the requirements of \cref{as:surrogates:kernel-requirements}.
\end{remark}
\begin{theorem} \label{thm:surrogates:Hk-and-G2}
	If the the kernel $\bk$ satisfies \cref{as:surrogates:kernel-requirements}, then $\cH_\bk(\Omega)$ is continuously embedded in $\cG^s$, i.e., 
	$\cH_\bk(\Omega) \subset \cG^s(I)$ and there exists $C > 0$ such that
	$\| \bg \|_{\cG^s} \leq C \| \bg \|_{\cH_\bk(\Omega)}$  for all $\bg \in \cH_\bk(\Omega)$.
\end{theorem}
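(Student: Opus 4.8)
The plan is to obtain the embedding as a direct consequence of \cref{lemma:Surrogate:RKHS:smoothness} and the uniform boundedness hypothesis in \cref{as:surrogates:kernel-requirements}(ii), using only the Cauchy--Schwarz inequality in $\cH_\bk(\Omega)$. Throughout, I would write a point of $\Omega$ as $(t,\tilde y)$ with $t \in I$ and $\tilde y \in \real^{n_y}$, so that a multi-index $\alpha$ recording a partial derivative in the spatial variables $\tilde y$ is identified with a multi-index in $\nat_0^d$, $d = 1+n_y$, whose component in the $t$-direction vanishes. Every such $\alpha$ with $|\alpha|\le s$ is covered by \cref{lemma:Surrogate:RKHS:smoothness}, since \cref{as:surrogates:kernel-requirements}(i) is precisely the hypothesis of that lemma.

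First I would dispense with the qualitative regularity. Fix $\bg \in \cH_\bk(\Omega)$. By \cref{lemma:Surrogate:RKHS:smoothness}(i), $\bg$ is $s$-times continuously differentiable on $\Omega$; in particular it is continuous, hence measurable in $t$ for each $\tilde y$, and $s$-times continuously partially differentiable with respect to $\tilde y$ for every $t\in I$ — exactly the smoothness and measurability required for membership in $\cG^s(I)$. For the quantitative estimate, \cref{lemma:Surrogate:RKHS:smoothness}(ii)--(iii) give $D_2^\alpha \bk(\cdot,y)\in\cH_\bk(\Omega)$ and $D^\alpha \bg(y) = \langle \bg, D_2^\alpha \bk(\cdot,y)\rangle_{\cH_\bk(\Omega)}$ for all $y\in\Omega$ and $|\alpha|\le s$. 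Applying \cref{lemma:Surrogate:RKHS:smoothness}(iii) once more, now with $\bg$ replaced by $D_2^\alpha\bk(\cdot,y)\in\cH_\bk(\Omega)$, yields $\|D_2^\alpha\bk(\cdot,y)\|_{\cH_\bk(\Omega)}^2 = D_1^\alpha D_2^\alpha \bk(y,y)$ (equivalently, this is the $N=0$ instance of the power function in \cref{thm:Surrogate:RKHS:derivative-error-bound}). Cauchy--Schwarz then gives, for all $y\in\Omega$ and $|\alpha|\le s$,
\[
	|D^\alpha \bg(y)| \;\le\; \|\bg\|_{\cH_\bk(\Omega)}\,\sqrt{D_1^\alpha D_2^\alpha \bk(y,y)}.
\]

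Next I would invoke \cref{as:surrogates:kernel-requirements}(ii): for each multi-index $\alpha$ with $|\alpha|\le s$ there is $M_\alpha \ge 0$ with $D_1^\alpha D_2^\alpha \bk(y,y)\le M_\alpha^2$ for all $y\in\Omega$, so the displayed bound becomes $\sup_{y\in\Omega}|D^\alpha \bg(y)| \le M_\alpha \|\bg\|_{\cH_\bk(\Omega)}$. Restricting to the finitely many $\alpha$ that correspond to $\tilde y$-derivatives of order $n\le s$ and summing as in the definition \eqref{eq:Sensitivity:caratheodory:g-space-norm} of the $\cG^s(I)$-norm, every entry of the derivative tensor $\tfrac{\partial^n}{\partial \tilde y^n}\bg(t,\tilde y)$ is bounded by the corresponding $M_\alpha\|\bg\|_{\cH_\bk(\Omega)}$, hence
\[
	\|\bg\|_{\cG^s(I)} \;=\; \sum_{n=0}^{s}\,\esssup_{t\in I}\,\sup_{\tilde y\in\real^{n_y}}\Big\|\tfrac{\partial^n}{\partial \tilde y^n}\bg(t,\tilde y)\Big\| \;\le\; C\,\|\bg\|_{\cH_\bk(\Omega)},
\]
with $C$ depending only on $\bk$, $s$, $n_y$ (through the number of order-$\le s$ multi-indices and the chosen tensor norm) and the constants $M_\alpha$. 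Thus $\bg\in\cG^s(I)$ with the asserted norm bound; since the inclusion is plainly linear, this is the claimed continuous embedding, and the componentwise statement for $\big(\cH_\bk(\Omega)\big)^{n_g}\subset\big(\cG^s(I)\big)^{n_g}$ is immediate.

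I do not expect a genuine obstacle here — the result is essentially a corollary of \cref{lemma:Surrogate:RKHS:smoothness} and \cref{as:surrogates:kernel-requirements}(ii). The only points meriting care are the two applications of \cref{lemma:Surrogate:RKHS:smoothness}(iii) — in particular verifying $\|D_2^\alpha\bk(\cdot,y)\|_{\cH_\bk(\Omega)}^2 = D_1^\alpha D_2^\alpha \bk(y,y)$, for which one must differentiate in the correct slot of the kernel — and the reconciliation of notation between the full $\Omega$-variable $(t,\tilde y)$ over which $\cH_\bk(\Omega)$ is defined and the $\cG^s(I)$-norm, which differentiates only in the spatial variables $\tilde y$.
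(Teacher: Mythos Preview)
Your proof is correct and follows essentially the same route as the paper: use \cref{lemma:Surrogate:RKHS:smoothness}(i) for the qualitative smoothness, then combine \cref{lemma:Surrogate:RKHS:smoothness}(iii) with Cauchy--Schwarz and \cref{as:surrogates:kernel-requirements}(ii) to bound $|D^\alpha\bg(y)|$ by $\|\bg\|_{\cH_\bk(\Omega)}\sqrt{\sup_z D_1^\alpha D_2^\alpha\bk(z,z)}$, and sum over $|\alpha|\le s$. You supply a bit more detail than the paper---explicitly verifying $\|D_2^\alpha\bk(\cdot,y)\|_{\cH_\bk(\Omega)}^2=D_1^\alpha D_2^\alpha\bk(y,y)$ and addressing the $(t,\tilde y)$ versus $y$ notation---but the argument is the same.
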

\begin{proof}
	Let $\bg \in \cH_\bk(\Omega)$. By \cref{as:surrogates:kernel-requirements} (i) and  \cref{lemma:Surrogate:RKHS:smoothness} (i), $\bg$ is $s$-times continuously differentiable. 
	Moreover, due to \cref{as:surrogates:kernel-requirements} (ii), we may apply \cref{lemma:Surrogate:RKHS:smoothness} (iii) and the Cauchy-Schwarz inequality to obtain
	\begin{align*}
		| D^\alpha \bg(y) | 
		&= | \langle \bg, D_2^\alpha \bk(\cdot, y) \rangle_{\cH_\bk(\Omega)} | 
		 \leq \| \bg \|_{\cH_\bk(\Omega)} \| D_2^\alpha \bk(\cdot, y) \|_{\cH_\bk(\Omega)},  \qquad \qquad | \alpha | \leq s,
	\end{align*}
	which by \eqref{eq:Sensitivity:caratheodory:g-space-norm} and repeated use of \cref{lemma:Surrogate:RKHS:smoothness} (iii)
	implies the result
	with $C = \sum_{| \alpha | \leq s} \sqrt{\sup_{z \in \Omega} | D_1^\alpha D_2^\alpha \bk(z, z) |}$.
\end{proof}

If $(\bg_*)_i \in  \cH_\bk(\Omega)$  and if $\bg_i(\cdot \, ; Y_+, G_+) \in  \cH_\bk(\Omega)$ is the kernel interpolant \eqref{eq:Surrogate:RKHS:kernel_interpolant_standard_form} 
with $Y, G$ replaced by $Y_+, G_+$, then the post-refinement error bound  \eqref{eq:Refinement:intro:error-indicator} is satisfied with
\begin{equation}  \label{eq:Refinement:intro:error-indicator-RKHS}
\begin{aligned}
         c_i &=  \gamma_i^{-1} \| (\bg_*)_i \|_{\cH_\bk(\Omega)},  \\
	 \bepsilon_i(y; Y_+, G_c) &=  \gamma_i \, \sqrt{\bk(y, y) - \bk(y, Y_+) \, \bk(Y_+, Y_+)^{-1} \, \bk(Y_+, y) },
\end{aligned}
\end{equation}
where  $ \gamma_i$ is a computationally inexpensive estimate of $\| (\bg_*)_i \|_{\cH_\bk(\Omega)}$ 
or simply $ \gamma_i = 1$;
see \eqref{eq:Surrogate:RKHS:pointwise-term-0} and \eqref{eq:Surrogate:RKHS:derivative-error-bound1}.
Moreover, if \cref{as:surrogates:kernel-requirements} is satisfied with $s=1$, 
then the post-refinement error bound  \eqref{eq:Refinement:intro:error-indicator-derivatives} is satisfied with
\begin{equation}  \label{eq:Refinement:intro:error-indicator-drivative-RKHS}
\begin{aligned}
       c^\alpha_i &=  \gamma_i^{-1} \| (\bg_*)_i \|_{\cH_\bk(\Omega)},  \\
	 \bepsilon^\alpha_i(y; Y_+, G_c) 
	& =  \gamma_i \, \sqrt{D_1^\alpha D_2^\alpha \bk(y, y) - D_1^\alpha \bk(y, Y_+) \, \bk(Y_+, Y_+)^{-1} \, D_2^\alpha \bk(Y_+, y)},
\end{aligned}
\end{equation}
where, again,  $ \gamma_i$ is a computationally inexpensive estimate of $\| (\bg_*)_i \|_{\cH_\bk(\Omega)}$ or simply $ \gamma_i = 1$;
see \eqref{eq:Surrogate:RKHS:derivative-error-bound} and  \eqref{eq:Surrogate:RKHS:derivative-error-bound1}.
The functions $y \mapsto  \bepsilon_i(y; Y_+, G_c) $ in \eqref{eq:Refinement:intro:error-indicator-RKHS} and 
$y \mapsto  \bepsilon^\alpha_i(y; Y_+, G_c)$ in \eqref{eq:Refinement:intro:error-indicator-drivative-RKHS} 
depend only on the kernel $\bk$, not on the expensive $\bg_*$. 
Moreover, because $Y_+$ differs from $Y_c$ by one column (or a few columns if more than one
point is added), computations previously done with $\bk(Y_c, Y_c)$ and similar kernel-dependent vectors can be updated to compute
\eqref{eq:Refinement:intro:error-indicator-RKHS} and \eqref{eq:Refinement:intro:error-indicator-drivative-RKHS} efficiently in large-scale settings.

\subsection{Simulation}   \label{sec:Refinement:ODE}

This section discusses model refinement for simulation problems of the form \eqref{eq:problem_formulation:IVP}.
We assume that the current model  $\bg_c(\cdot) = \bg(\cdot \, ; Y_c, G_c) \in \cG^2$ is given
and we want to select $y_+ \in \Omega$ to construct a new model 
$\bg_+(\cdot) = \bg(\cdot \, ; Y_+, G_+) \in \cG^2$.
If kernel interpolation-based models are used, we assume that the kernel satisfies 
\cref{as:surrogates:kernel-requirements} with $s=2$,
so that $\bg_*$, $\bg_c(\cdot) = \bg(\cdot \, ; Y_c, G_c)$, $\bg_+(\cdot) = \bg(\cdot \, ; Y_+, G_+) \in \big(\cH_\bk(\Omega)\big)^{n_g} \subset \cG^2$.

Given $\bg_c \in \cG^2$, the corresponding solution of the IVP \eqref{eq:problem_formulation:IVP} is $\bx_c := \bx(\cdot \, ; \bg_c) \in \WW^{n_x}$
and we set 
\[
	\by_c(\cdot) := \big( \cdot, \bx_c(\cdot)\big),
\]
cf. \cref{rk:Refinement:intro:compatibility}. 
To declutter notation, similar to \eqref{eq:Sensitivity:ODE:shorthand} and \eqref{eq:Sensitivity:ODE:shorthand-2},
we use the shorthand
\begin{align*}
		\bff_c[\cdot] &:= \bff\Big(\cdot, \bx_c(\cdot), \bg_c\big( \cdot, \bx_c(\cdot)\big) \Big), &
	         \bg_c[\cdot] &:= \bg_c \big( \cdot, \bx_c(\cdot) \big),  \\
		\BA_c[\cdot] &:= (\bff_c)_x[\cdot] + (\bff_c)_g[\cdot] (\bg_c)_x[\cdot],  &
		\BB_c[\cdot] &:= (\bff_c)_g[\cdot], \\
	\ell_c[\cdot] &:=  \ell\Big( \cdot, \bx_c(\cdot), \bg_c \big( \cdot, \bx_c(\cdot) \big) \Big), & \qquad
    	\phi_c[t_f] &:= \phi\big( \bx_c(t_f)\big).
\end{align*}

Consider the error in the QoI \eqref{eq:problem_formulation:QoI-ODE-g-only}.
We want to compute a new model $\bg_+[y_+])$ so that the error 
$|\widetilde{q}(\bg_+[y_+]) - \widetilde{q}(\bg_*)|$ is reduced.
We approximate this error using sensitivities by
\begin{equation} \label{eq:Refinement:ODE:approx-QoI-error}
	|\widetilde{q}(\bg_+[y_+]) - \widetilde{q}(\bg_*)| 
	\approx | \widetilde{q}_\bg(\bg_+[y_+])(\bg_+[y_+] - \bg_*) |
	\approx | \widetilde{q}_\bg(\bg_c)(\bg_+[y_+] - \bg_*) |.
\end{equation}
Because $\bg_+[y_+] - \bg_*$ satisfies the post-refinement error indicator \eqref{eq:Refinement:intro:error-indicator}, 
and the QoI sensitivity is computed using  \cref{thm:Sensitivity:ODE:QoI-sensitivity},
an upper bound for the right-hand side in \eqref{eq:Refinement:ODE:approx-QoI-error} is
given by the optimal value of 
\begin{equation} \label{eq:Refinement:ODE:OCP-QoI0}
\begin{aligned}
	\sup_{\delta \bg \in (\cG^2(I) )^{n_g}} \quad 
	&\left|  \int_{t_0}^{t_f} \big( \BB_c[t]^T \blambda_c(t) + \nabla_g \ell_c[t] \big)^T 
	                                                    \delta \bg \big( t, \bx_c(t) \big) \, dt,\right| \\
	\mbox{s.t.} \quad 
	&  \big|  \delta \bg \big( t, \bx_c(t) \big) \big|
	      \leq c_i \bepsilon_i \big( \by_c(t); Y_+, G_c \big), \quad \aall t \in I, \quad  i = 1, \dots, n_g,
\end{aligned}
\end{equation}
where $\blambda_c$ solves \eqref{eq:Sensitivity:ODE:x-adjoint} with $\overline{\blambda}, \overline{\BA}, \overline{\ell}, \overline{\phi}, \overline{\bg}$ replaced by $\blambda_c, \BA_c, \ell_c, \phi_c, \bg_c$.

To obtain an easily computable upper bound, we relax \eqref{eq:Refinement:ODE:OCP-QoI0} further
and replace the state $\bx_c$-dependent $\delta \bg \in \cG^2$ by
$\bdelta \in \big( L^\infty(I) \big)^{n_g}$. Furthermore, we drop the constants $c_i$.
Thus, we compute  a scaled upper bound for the right-hand side in \eqref{eq:Refinement:ODE:approx-QoI-error} 
as the optimal function value of 
\begin{equation} \label{eq:Refinement:ODE:OCP-QoI}
\begin{aligned}
	\max_{\bdelta} \quad &\left| \int_{t_0}^{t_f} \big( \BB_c[t]^T \blambda_c(t) + \nabla_g \ell_c[t] \big)^T \bdelta(t) \, dt \right| \\
	\mbox{s.t.} \quad 
	& - \bepsilon\big( \by_c(t); Y_+, G_c \big) \leq \bdelta(t) \leq \bepsilon\big( \by_c(t); Y_+, G_c \big), & \qquad \aall t \in I,
\end{aligned}
\end{equation}
where the inequality costraints are understood componentwise, and
where $\blambda_c$ again 
solves \eqref{eq:Sensitivity:ODE:x-adjoint} with $\overline{\blambda}, \overline{\BA}, \overline{\ell}, \overline{\phi}, \overline{\bg}$ replaced by $\blambda_c, \BA_c, \ell_c, \phi_c, \bg_c$.
Due to the symmetry of the box constraints and the linearity of the objective function, the absolute value may be dropped in the objective function. The resulting linear program has a simple analytical solution, where $\odot$ denotes componentwise multiplication:
\[
	\bdelta(t) = \mathrm{sgn} \big( \BB_c[t]^T \blambda_c(t) + \nabla_g \ell_c[t] \big) \odot \bepsilon\big( \by_c(t); Y_+, G_c \big), \qquad \aall t \in I,
\]
with optimal objective value
\begin{equation} \label{eq:Refinement:ODE:OCP-QoI-aquisition-function}
	 V^{\rm QoI}( \by_c; Y_+, G_c \big)  
	 = \int_{t_0}^{t_f} \big| \BB_c[t]^T \blambda_c(t) + \nabla_g \ell_c[t] \big|^T \bepsilon\big( \by_c(t); Y_+, G_c \big) \, dt,
\end{equation}
where the absolute value is applied componentwise.
Thus, the following result is proven.
\begin{theorem} \label{thm:Sensitivity:ODE:QoI-error-bound}
        If the assumptions of \cref{thm:Sensitivity:ODE:QoI-sensitivity} and  \eqref{eq:Refinement:intro:error-indicator} hold, 
        then the approximate QoI error measure \eqref{eq:Refinement:ODE:approx-QoI-error} satisfies the bound
	\[
		| \widetilde{q}_\bg(\bg_c)(\bg_+[y_+] - \bg_*) | \leq c \,  V^{\rm QoI}( \by_c; Y_+, G_c \big),
	\]
        and $c = \max\{ c_1, \ldots, c_{n_g}\}$, where $c_1, \ldots, c_{n_g}$ are the constants in  \eqref{eq:Refinement:intro:error-indicator}.
\end{theorem}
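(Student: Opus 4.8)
The plan is to reduce the claim to the adjoint representation of the QoI sensitivity from \cref{thm:Sensitivity:ODE:QoI-sensitivity} and then apply the post-refinement error bound pointwise. First I would invoke \cref{thm:Sensitivity:ODE:QoI-sensitivity} with $\overline{\bg} = \bg_c$, so that $\overline{\bx} = \bx_c$, $\overline{\BB} = \BB_c$, $\overline{\ell} = \ell_c$, and $\overline{\blambda}$ is the solution of the adjoint equation \eqref{eq:Sensitivity:ODE:x-adjoint} associated with $\bg_c$. Evaluating the sensitivity on the perturbation $\delta\bg = \bg_+[y_+] - \bg_*$ — equivalently, in the extended setting of \cref{sec:overview}, on the time-dependent function $\bdelta(t) := \big(\bg_+[y_+] - \bg_*\big)\big(\by_c(t)\big)$, which lies in $\LL^{n_g}$ because $\bg_+[y_+], \bg_* \in \big(\cG^2(I)\big)^{n_g}$ (in the kernel-interpolation case this follows from \cref{thm:surrogates:Hk-and-G2}) and $\bx_c \in \WW^{n_x}$ — yields the exact identity
\[
	\widetilde{q}_\bg(\bg_c)(\bg_+[y_+] - \bg_*) = \int_{t_0}^{t_f} \big( \BB_c[t]^T \overline{\blambda}(t) + \nabla_g \ell_c[t] \big)^T \big(\bg_+[y_+] - \bg_*\big)\big(\by_c(t)\big) \, dt.
\]

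Next I would take absolute values. Using the elementary componentwise estimate $|v^T w| \le |v|^T |w|$ for $v, w \in \real^{n_g}$ (with $|\cdot|$ applied entrywise) inside the integral, together with the triangle inequality for integrals, gives
\[
	\big| \widetilde{q}_\bg(\bg_c)(\bg_+[y_+] - \bg_*) \big| \le \int_{t_0}^{t_f} \big| \BB_c[t]^T \overline{\blambda}(t) + \nabla_g \ell_c[t] \big|^T \, \big| \big(\bg_+[y_+] - \bg_*\big)\big(\by_c(t)\big) \big| \, dt.
\]
Then I would invoke the post-refinement error bound \eqref{eq:Refinement:intro:error-indicator} componentwise: for each $i$, $\big| \bg_i(\by_c(t); Y_+, G_+) - (\bg_*)_i(\by_c(t)) \big| \le c_i\, \bepsilon_i\big(\by_c(t); Y_+, G_c\big) \le c\, \bepsilon_i\big(\by_c(t); Y_+, G_c\big)$ with $c = \max\{c_1, \dots, c_{n_g}\}$, so that $\big| (\bg_+[y_+] - \bg_*)(\by_c(t)) \big| \le c\, \bepsilon\big(\by_c(t); Y_+, G_c\big)$ entrywise. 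Since the vector $\big| \BB_c[t]^T \overline{\blambda}(t) + \nabla_g \ell_c[t] \big|$ has nonnegative entries, substituting this bound and pulling the constant $c$ out of the integral produces exactly the claimed inequality.

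The argument is short, and the only point requiring care is the first step: justifying that $\widetilde{q}_\bg(\bg_c)$ may legitimately be applied to $\bg_+[y_+] - \bg_*$ rather than only to small perturbations in a neighborhood of $\bg_c$, i.e., that the representation in \cref{thm:Sensitivity:ODE:QoI-sensitivity} extends from $\big(\cG^2(I)\big)^{n_g}$ to all relevant $\LL^{n_g}$ perturbations when restricted to the current trajectory; this is precisely the extension of the continuous linear operator $\delta\bg \mapsto \widetilde{q}_\bg(\bg_c)\delta\bg$ noted in \cref{sec:overview}. I would also note that, since the theorem bounds $|\widetilde{q}_\bg(\bg_c)(\bg_+[y_+] - \bg_*)|$ and not the true QoI error $|\widetilde{q}(\bg_+[y_+]) - \widetilde{q}(\bg_*)|$ — the two agreeing only through the Taylor approximations in \eqref{eq:Refinement:ODE:approx-QoI-error} — nothing beyond \eqref{eq:Refinement:intro:error-indicator} and \cref{thm:Sensitivity:ODE:QoI-sensitivity} is needed.
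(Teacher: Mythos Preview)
Your proof is correct and is essentially the paper's argument written out directly: the paper frames the bound as the optimal value of the linear program \eqref{eq:Refinement:ODE:OCP-QoI}, observes that $c^{-1}(\bg_+[y_+]-\bg_*)\big(\by_c(\cdot)\big)$ is feasible, and solves the LP analytically to obtain \eqref{eq:Refinement:ODE:OCP-QoI-aquisition-function}; your chain of inequalities $|v^Tw|\le |v|^T|w|$ followed by the componentwise bound is exactly this LP solution unpacked. The only stylistic difference is that the paper leads with the optimization formulation (because the optimal value is later reused as the acquisition function), whereas you go straight to the estimate.
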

\cref{thm:Sensitivity:ODE:QoI-error-bound} is the adaptive equivalent of \cite[Thm.~3.6]{JRCangelosi_MHeinkenschloss_2025c}.
Since the $c_i$ are dropped from \eqref{eq:Refinement:intro:error-indicator} in the formulation of the box constraints in \eqref{eq:Refinement:ODE:OCP-QoI}, it is beneficial to have constants $c_1 \approx  \ldots \approx c_{n_g} \approx 1$,
 which in the kernel interpolation case is achieved when $\gamma_i  \approx \| (\bg_*)_i \|_{\cH_\bk(\Omega)}$,
 $i = 1, \ldots, n_g$; see \eqref{eq:Refinement:intro:error-indicator-RKHS}.
	
The new refinement point $y_+$ is selected to minimize the acquisition function 
\eqref{eq:Refinement:ODE:OCP-QoI-aquisition-function} over a chosen set of candidates $Y_{\rm cand} \subset \Omega$, 
i.e.,  $y_+$ and the corresponding $Y_+ := [y_1, \dots, y_N, y_+]$ are computed as a solution of
\[
	\min_{y_+ \in Y_{\rm cand}} \;  V^{\textrm{QoI}}( \by_c; Y_+, G_c \big).
\]

\subsection{Optimization}   \label{sec:Refinement:OCP}
This section discusses model refinement for optimization problems of the form  \eqref{eq:Sensitivity:OCP:OCP}. Here the set $\Omega = I \times \real^{n_x} \times \real^{n_u} \times \real^{n_p}$ denotes time-state-control-parameter space.
We assume that the current model  $\bg_c(\cdot) = \bg(\cdot \, ; Y_c, G_c) \in \cG^3$
and we want to select $y_+ \in \Omega$ to obtain a new model $\bg_+(\cdot) = \bg(\cdot \, ; Y_+, G_+) \in \cG^3$.
If kernel interpolation-based models are used, we assume that the kernel satisfies \cref{as:surrogates:kernel-requirements} with $s=3$,
so that $\bg_*$, $\bg_c(\cdot) = \bg(\cdot \, ; Y_c, G_c)$, $\bg_+(\cdot) = \bg(\cdot \, ; Y_+, G_+) \in \big(\cH_\bk(\Omega)\big)^{n_g} \subset \cG^3$.

Given $\bg_c \in \cG^3$, the corresponding solution  of \eqref{eq:Sensitivity:OCP:OCP}  is $(\bx_c, \bu_c, \bp_c) \in \WW^{n_x} \times \LL^{n_u} \times \real^{n_p}$
with corresponding costate $\blambda_c \in \WW^{n_x}$.
To declutter notation, we use
\[
	\by_c(\cdot) := \big( \cdot, \bx_c(\cdot), \bu_c(\cdot), \bp_c \big)
\]
and the shorthand
\begin{align*}
		\BH_c[\cdot], \ldots, \bd_c[\cdot], \ell_c[\cdot], \phi_c[t_f],
\end{align*}
analogous to \eqref{eq:Sensitivity:OCP:shorthand} and \eqref{eq:Sensitivity:OCP:shorthand-1}, 
but with functions evaluated at $\bg_c, \bx_c, \bu_c, \bp_c, \blambda_c$
instead of at $\overline{\bg}, \overline{\bx}, \overline{\bu}, \overline{\bp}, \overline{\blambda}$.

Consider the QoI \eqref{eq:problem_formulation:QoI-OCP-g-only}.
A similar strategy to the one in \cref{sec:Refinement:ODE} may be applied to reduce the error in the QoI \eqref{eq:problem_formulation:QoI-OCP-g-only} 
as a function of the solution to the OCP \eqref{eq:Sensitivity:OCP:OCP}. 
Using the OCP sensitivity result from \eqref{eq:Sensitivity:OCP:sensitivity_result_qoi_adjoint} with the post-refinement error bound (including first derivatives) 
given by \eqref{eq:Refinement:intro:error-indicator-derivatives}, one obtains an upper bound on the approximate QoI error measure
\begin{equation} \label{eq:Refinement:OCP:approx-QoI-error}
	|\widetilde{q}(\bg_+[y_+]) - \widetilde{q}(\bg_*)| 
	\approx | \widetilde{q}_\bg(\bg_+[y_+])(\bg_+[y_+] - \bg_*) |
	\approx | \widetilde{q}_\bg(\bg_c)(\bg_+[y_+] - \bg_*) |
\end{equation}
as the solution of the optimization problem
     \begin{equation} \label{eq:Refinement:OCP:OCP-QoI}
\begin{aligned}
	\sup_{\delta \bg} \quad &\Big| \int_{t_0}^{t_f} \bh_c[t]^T  \delta \bg[t] 
	     + \bd_c[t]^T \delta \bg_x[t] \widetilde{\delta \bx}(t)  
                                             +  \bd_c[t]^T \delta \bg_u[t] \widetilde{\delta \bu}(t)  
                                             +  \bd_c[t]^T \delta \bg_p[t] \widetilde{\delta \bp}   \, dt	   \\
	\mbox{s.t.} \quad 
	&  \big| \delta \bg_i \big( \by_c(t) \big) \big| \leq c_i \bepsilon_i \big( \by_c(t); Y_+, G_c \big), 
	   \qquad \aall t \in I,  \;  i = 1, \dots, n_g, \\
	&  \big|  (\delta \bg_i)_z\big( \by_c(t) \big) \big|  \leq c_i^z \bepsilon^z\big( \by_c(t); Y_+, G_c \big), 
	      \;  \aall t \in I, \;  i = 1, \dots, n_g, \; z \in \{ x, u, p \}, 
\end{aligned}
\end{equation}
where
\[
     \bh_c[t] :=  (\BH_c)_{gx}[t]  \widetilde{\delta \bx}(t) 
                                                  + (\BH_c)_{gu}[t]  \widetilde{\delta \bu}(t)
                                                  + (\BH_c)_{gp}[t]  \widetilde{\delta \bp}
                                                  + (\bff_c)_{g}[t]^T \widetilde{\delta \blambda}(t) 
                                                  +   \nabla_g \ell_c[t].
\]
In addition to replacing $\delta \bg\big( \by_c(\cdot) \big)$ by $\bdelta(\cdot)$ as in \cref{sec:Refinement:ODE}, 
the derivatives 
$\delta \bg_x\big( \by_c(\cdot) \big) \in \LL^{n_g \times n_x}$,
$\delta \bg_u\big( \by_c(\cdot) \big) \in \LL^{n_g \times n_u}$,
$\delta \bg_p\big( \by_c(\cdot) \big) \in \LL^{n_g \times n_p}$
are similarly replaced by functions
\[
	\bdelta^x \in \LL^{n_g \times n_x}, \quad \bdelta^u \in \LL^{n_g \times n_u}, \quad \bdelta^p \in \LL^{n_g \times n_p}
\]
respectively to obtain 
\begin{align}  \label{eq:Refinement:OCP:OCP-QoI-delta}
	\max_{\bdelta, \bdelta^x, \bdelta^u, \bdelta^p} \quad &\Big| \int_{t_0}^{t_f}  \bh_c[t]^T  \bdelta(t) 
	                                    + \bd_c[t]^T \bdelta^x(t) \widetilde{\delta \bx}(t)
                                             +   \bd_c[t]^T \bdelta^u(t) \widetilde{\delta \bu}(t) \
                                             +   \bd_c[t]^T \bdelta^p(t) \widetilde{\delta \bp}  \, dt	  \Big|   \nonumber \\ 
      \mbox{s.t.} \quad 
	& - \bepsilon\big( \by_c(t); Y_+, G_c \big) \leq \bdelta(t) \leq \bepsilon\big( \by_c(t); Y_+, G_c \big),  \qquad  \aall t \in I,  \\
	& - \bepsilon^z\big( \by_c(t); Y_+, G_c \big) \leq \bdelta^z(t) \leq \bepsilon^z\big( \by_c(t); Y_+, G_c \big), \; \aall t \in I, 
	\; z \in \{ x, u, p \}, \nonumber 
\end{align}
where the box constraints are understood elementwise.

Note that $\bdelta^x$, $\bdelta^u$, $\bdelta^p$ in \eqref{eq:Refinement:OCP:OCP-QoI-delta}
are not derivatives of $\bdelta$; they are independent functions, which is why they are indicated by superscripts instead of subscripts. The same is true for $\bepsilon^x$, $\bepsilon^u$, $\bepsilon^p$. Analogously to \cref{sec:Refinement:ODE}, \eqref{eq:Refinement:OCP:OCP-QoI-delta} has an analytical solution:
\begin{align*}
	\bdelta(t) &= \mathrm{sgn}\big(  \bh_c[t]  \big) \odot \bepsilon\big( \by_c(t); Y_+, G_c \big), \\
	\bdelta_{ij}^x(t) &= \mathrm{sgn}\big((\bd_c)_i[t] \widetilde{\delta \bx}_j(t)\big) \, \bepsilon_{ij}^x\big( \by_c(t); Y_+, G_c \big), & i = 1, \dots, n_g, \quad j = 1, \dots, n_x, \\
	\bdelta_{ij}^u(t) &= \mathrm{sgn}\big((\bd_c)_i[t] \widetilde{\delta \bu}_j(t)\big) \, \bepsilon_{ij}^u\big( \by_c(t); Y_+, G_c \big), & i = 1, \dots, n_g, \quad j = 1, \dots, n_u, \\
	\bdelta_{ij}^p(t) &= \mathrm{sgn}\big((\bd_c)_i[t] \widetilde{\delta \bp}_j\big) \, \bepsilon_{ij}^p\big( \by_c(t); Y_+, G_c \big),    
	& i = 1, \dots, n_g, \quad j = 1, \dots, n_p,
\end{align*}
with corresponding objective value
\begin{align}  \label{eq:Refinement:OCP:OCP-acquisition-function}
\hspace*{5ex}
	   & V^{\rm QoI}( \by_c; Y_+, G_c \big)  \nonumber  \\
	   &= \int_{t_0}^{t_f}
	              \big|   \bh_c[t]  \big|^T  \bepsilon\big( \by_c(t); Y_+, G_c \big) 
	              +  \big| \bd_c[t] \big|^T \bepsilon^x\big( \by_c(t); Y_+, G_c \big) \, \big| \widetilde{\delta \bx}(t) \big|   \nonumber  \\
            & \qquad  \quad +    \big|  \bd_c[t] \big|^T \bepsilon^u\big( \by_c(t); Y_+, G_c \big) \, \big| \widetilde{\delta \bu}(t) \big| 
                                      +  \big|  \bd_c[t] \big|^T \bepsilon^p\big( \by_c(t); Y_+, G_c \big) \, \big| \widetilde{\delta \bp} \big|  \, dt, 
\end{align}
where the absolute value is applied componentwise. Thus, the following result is proven.
\begin{theorem} \label{thm:Refinement:OCP:OCP-acquisition-function}
	If the assumptions of \cref{thm:Sensitivity:OCP:sensitivity_result_qoi-adjoint} and 
	 \eqref{eq:Refinement:intro:error-indicator-derivatives} hold, 
	 then the approximate QoI error measure \eqref{eq:Refinement:OCP:approx-QoI-error} satisfies the bound
	\[
	       | \widetilde{q}_\bg(\bg_c)(\bg_+[y_+] - \bg_*) |  \nonumber \\ 
		\leq c \, V^{\rm QoI}( \by_c; Y_+, G_c \big),
	\]
          with $c = \max_{|\alpha| \le 1, i \in \{1, \ldots, n_g\}} c^\alpha_i$, where $c^\alpha_i$, $i \in \{ 1, \ldots, n_g \}$, $|\alpha| \le 1$,
            are the constants in \eqref{eq:Refinement:intro:error-indicator-derivatives}.
\end{theorem}

 Since the $c_i^\alpha$ are dropped from \eqref{eq:Refinement:intro:error-indicator-derivatives} in the formulation of the box constraints for \eqref{eq:Refinement:OCP:OCP-QoI-delta},
it is beneficial to have constants 
 $c^\alpha_i \approx 1$, $i \in \{ 1, \ldots, n_g \}$, $|\alpha| \le 1$,
 which in the kernel interpolation case is achieved when $\gamma_i  \approx \| (\bg_*)_i \|_{\cH_\bk(\Omega)}$,
 $i = 1, \ldots, n_g$; see \eqref{eq:Refinement:intro:error-indicator-drivative-RKHS}.
 
The refinement point $y_+$ is chosen among a set of candidates $Y_{\rm cand}$ to minimize the acquisition function \eqref{eq:Refinement:OCP:OCP-acquisition-function}.

\section{Simulation and Trajectory Optimization of a Hypersonic Vehicle} \label{sec:numerics}
In this section, we present numerical results for the simulation and trajectory optimization of a hypersonic vehicle.
The vehicle dynamics will be specified in \cref{sec:numerics_ODE}. The lift, drag, and moment coefficient functions 
will play the role of the component function $\bg$, and they will be functions of two variables.
See \eqref{eq:Refinement:ODE:true-model} below.
Because we want to compare the results obtained with surrogates
to the ones obtained with the true models, we assume that the true lift, drag, and moment coefficient functions are
given by polynomial models (specified in \eqref{eq:Refinement:ODE:true-model}). 

We use radial kernel interpolants for $C_L$, $C_D$, $C_M$ constructed with the Wendland radial functions.
Recall that in our application, the component function $\bg$ will be a function of two variables.
We use $\bk(y, z) = \Psi_{3,1}(\| y - z \|_2 / \ell)$ for simulation and $\bk(y, z) = \Psi_{3,2}(\| y - z \|_2 / \ell)$ 
for optimization where the $C^2$ function $\Psi_{3,1}$ and the $C^4$ function $\Psi_{3,2}$ are given by
\begin{equation}   \label{eq:wendland-kernel}
	 \Psi_{3,1}(r) = \big((1-r)_+\big)^4 (4r+1), \quad
	 \Psi_{3,2}(r) = \big((1-r)_+\big)^6 (35r^2 + 18r+ 3),
\end{equation}
for $r \ge 0$ and are extended on the whole real line by even extension.
Here  $z_+ = \max\{ z, 0 \}$.
See \cite{HWendland_1995a}, \cite[Table~9.1]{HWendland_2004a} for details on these
functions. We use the lengthscale $\ell = 3$. 
These kernels give better results than Gaussian kernels used in \cite{JRCangelosi_2025a}.
Wendland kernels, unlike the Gaussian kernel, are compactly supported, making them well-suited for local corrections. 
Since we do not adjust the lengthscale, we chose it large enough to cover the domain of interest, 
so the samples never lie outside the support of the kernel in this case, but reducing the lengthscale 
could introduce sparsity into the Gram matrix $\bk(Y, Y)$.

\subsection{Simulation} \label{sec:numerics_ODE}
We use
the adaptive model refinement procedure proposed in \cref{sec:Refinement:ODE} to solve an IVP for a notional hypersonic vehicle in longitudinal flight. 
The control surface of the vehicle is an elevator (or flap) that makes an angle $\delta$ [rad] with the chord line of the vehicle. This angle may be adjusted to influence the pitch of the vehicle. We use the second derivative of $\delta$ as a given input, making $\delta$ and its first derivative $\dot{\delta}$ states.
The states are downrange $x_1$ [m], altitude $x_2$  [m], speed $v$  [m/s], flight path angle $\gamma$  [rad], 
angle of attack $\alpha$ [rad], pitch rate $q$ [rad/s], flap angle $\delta$ [rad], and flap rate $\dot\delta$ [rad/s], i.e.,
in this example,
\begin{equation} \label{eq:Refinement:ODE:hypersonic-states}
       \bx(t) =  \big( \bx_1(t), \bx_2(t), \bv(t), \bgamma(t), \balpha(t), \bq(t), \bdelta(t), \dot{\bdelta}(t) \big).
\end{equation}
Note that in this context, $\bdelta$ is a physical quantity and is not related to the
$\bdelta$ in  \eqref{eq:Refinement:ODE:OCP-QoI} or \eqref{eq:Refinement:OCP:OCP-QoI-delta}.
Moreover, $\bdelta(t)$ and $\dot{\bdelta}(t)$ denote state functions, and $\ddot{\bdelta}(t)$  denotes the input.
We do not use $\dot{ \; }$ for differentiation with respect to time, which is denoted by $'$.

The dynamics of the hypersonic vehicle are given by 
\begin{subequations}  \label{eq:Refinement:ODE:hypersonic_IVP}
\begin{align} 
    \bx_1'(t) &= \bv(t) \cos \bgamma(t), \\
    \bx_2'(t) &= \bv(t) \sin \bgamma(t), \\
    \bv'(t) &= -\frac{1}{m} \Big( D\big(\bx_2(t), \bv(t), \balpha(t), \bdelta(t) \big) + m g\big(\bx_2(t)\big) \sin \bgamma(t) \Big), \\
    \bgamma'(t) &= \frac{1}{m \bv(t)} \Big( L \big(\bx_2(t), \bv(t), \balpha(t), \bdelta(t) \big) - mg \big(\bx_2(t) \big) \cos \bgamma(t)  \\ 
                        & \hspace*{40ex} + \frac{m \bv(t)^2 \cos \bgamma(t)}{R_E + \bx_2(t)} \Big),   \nonumber  \\
    \balpha'(t) &= \bq(t) - \bgamma'(t), \\
    \bq'(t) &= M \big(\bx_2(t), \bv(t), \balpha(t), \bdelta(t) \big) \; / \; I_z, \\
    \bdelta'(t) &=  \dot\bdelta(t), \quad
    \dot\bdelta'(t) =  \ddot\bdelta(t),     \label{eq:Refinement:ODE:hypersonic_IVP-delta}
\end{align}
\end{subequations}
for $t \in (0,T)$,
with initial conditions
\begin{equation} \label{eq:Refinement:ODE:initial-conditions}
\begin{aligned}
 	\bx_1(0) &= 0, \quad\! \bx_2(0) = 80,000, \quad\! \bv(0) = 5,000, \quad\! \bgamma(0) = 0, \quad\!
	\balpha(0) = 20 \pi / 180, \\
	 \bq(0) &= 0,  \quad\! \bdelta(0) = -10 \pi / 180, \quad\! \dot\bdelta(0) = 0.
\end{aligned}
\end{equation}
Given an input $\ddot\bdelta$, the dynamic equations \eqref{eq:Refinement:ODE:hypersonic_IVP-delta} and
initial conditions \eqref{eq:Refinement:ODE:initial-conditions} can be used to compute $\bdelta$ analytically, and
make $\bdelta$ the input. However, because  the states \eqref{eq:Refinement:ODE:hypersonic-states} and the
state equations \eqref{eq:Refinement:ODE:hypersonic_IVP} will also be used in the optimal control setting in \cref{sec:numerics_OCP}, 
where $\ddot\bdelta$ is the control and box constraints are imposed on $\bdelta$, we also use \eqref{eq:Refinement:ODE:hypersonic-states} and 
\eqref{eq:Refinement:ODE:hypersonic_IVP} in the simulation setting.

The hypersonic vehicle considered in this example has mass $m = 1000 \textrm{ kg}$, moment of inertia 
$I_z = 247 \textrm{ kg $\times$ m$^2$}$ about the pitch axis, reference area $A_w = 4.4 \textrm{ m$^2$}$, 
and reference length $L_w = 3.6 \textrm{ m}$.
The dynamics \eqref{eq:Refinement:ODE:hypersonic_IVP} of the hypersonic vehicle also depend on gravitational acceleration, computed as
\begin{align*}
	g(x_2) = \mu \ / \ (R_E + x_2)^2 \qquad \mbox{[m/s$^2$]},
\end{align*}
where $\mu = 3.986 \times 10^{14} \textrm{ m$^3$/s$^2$}$ is the standard gravitational parameter and 
$R_E \approx 6.371 \times 10^6 \textrm{ m}$ is the radius of Earth.
Note that the gravitational acceleration $g$ is given and not a component function that needs to be approximated.
The dynamics \eqref{eq:Refinement:ODE:hypersonic_IVP} also depend
on lift, drag, and vehicle moment about the pitch axis, given by
\begin{align*}
	L(x_2, v, \alpha, \delta) &= \overline{q}(x_2, v) C_L(\alpha, \delta) A_w  \;\; \textrm{[N]}, \quad
        D(x_2, v, \alpha, \delta) = \overline{q}(x_2, v) C_D(\alpha, \delta) A_w  \;\; \textrm{[N]}, \\
        M(x_2, v, \alpha, \delta) &= \overline{q}(x_2, v) C_M(\alpha, \delta) A_w L_w \;\; \textrm{[N $\times$ m]},
\end{align*}
with dynamic pressure $\overline{q}$ and atmospheric density $\rho$ given by
\begin{equation} \label{eq:Refinement:ODE:DP}
	\overline{q}(x_2, v) = \frac{1}{2} \rho(x_2) v^2 \quad \textrm{[Pa]}, \qquad
	\rho(x_2) = 1.225 \, \mathrm{exp}(-0.00014x_2) \quad \textrm{[kg/m$^3$]}.
\end{equation}

The lift, drag, and moment coefficients $C_L, C_D, C_M$ for the vehicle are assumed to depend on angle of attack and flap deflection angle. They play the role of the model function in this example, i.e.,
\[
      \bg\big(t, \bx(t) \big) 
      =  \Big(  C_L\big(\balpha(t), \bdelta(t)\big), \;  C_D\big(\balpha(t), \bdelta(t) \big), \;  C_M\big(\balpha(t), \bdelta(t) \big) \Big)^T.
\]
In this example, the ``true'' lift, drag, and moment coefficients,
which compose $\bg_*$, are polynomial models given by
\begin{equation} \label{eq:Refinement:ODE:true-model}
\begin{aligned}
	C_L^*(\alpha, \delta) &= -0.04 + 0.8 \alpha + 0.13 \delta, \\
	C_D^*(\alpha, \delta) &= 0.012 - 0.01 \alpha + 0.6 \alpha^2 - 0.02 \delta + 0.12 \delta^2, \\
	C_M^*(\alpha, \delta) &= 0.1745 - \alpha - \delta.
\end{aligned}
\end{equation}
As mentioned earlier, this is a synthetic example that allows us to compare with the true model; in a real-world setting, these functions would be 
expensive to evaluate using computational fluid dynamics simulations.

In our simulation, we use the  given input
\begin{equation} \label{eq:Refinement:ODE:input-control}
      \ddot\bdelta(t) =  \big((t - 300)_+\big)^2 \times 10^{-9},  
\end{equation}
where $z_+ = \max\{ z, 0 \}$.
The simulation is run until time $T = 550$ [s].

The dynamics \eqref{eq:Refinement:ODE:hypersonic_IVP} are solved using a flipped Legendre-Gauss-Radau collocation.
This discretization will also be used in our trajectory optimization. See, e.g., \cite{SKameswaran_LTBiegler_2008a} for more details.
Formulas for derived quantities and other equations involving states are expressed in base units (kg/m/s) unless otherwise specified, 
but numerical results are reported in (kg/km/s).
Using the (kg/km/s) system led to better numerical performance in the implementation 
of the model refinement approach. Angles appearing in formulas are in radians, but are reported in degrees for figures.

We refine the model to improve the QoI, which, in this example, is downrange:
\[
	\widetilde{q}(\bg) = q\big( \bx(\cdot \, ; \bg), \bg \big) = \bx_1(T).
\]
We construct an initial model $\bg_c$ from $N = 4$ samples of $\bg_*$ generated in ranges
\begin{equation} \label{eq:Refinement:ODE:ranges}
	\alpha \in [-5\pi / 180, 25 \pi / 180], \qquad \delta \in [-15 \pi / 180, 10 \pi / 180]
\end{equation}
using Latin hypercube sampling. 
We use $\gamma_i = \| (\bg_c)_i \|_{\cH_\bk(\Omega)}$ 
as our computable  estimate for $\| (\bg_*)_i \|_{\cH_\bk(\Omega)}$ in \eqref{eq:Refinement:intro:error-indicator-RKHS}, and we take
$\bepsilon_i(y; Y_+, G_c)$ in \eqref{eq:Refinement:intro:error-indicator-RKHS} as the model
error bound. The constants $\gamma_i$ are kept fixed as the model is refined.

To demonstrate the efficacy of our sensitivity-driven approach, we compare it to two other approaches,
max error bound and Latin hypercube.

\emph{Sensitivity-driven (SD)}: Generate $20$ candidate samples $\{y_i^+\}_{i=1}^{20}$ using a hybrid approach, where $10$ of the samples are generated in ranges \eqref{eq:Refinement:ODE:ranges} using Latin hypercube and the other $10$ are states selected at equispaced times along the trajectory computed with the current surrogate. Then, select the point that minimizes \eqref{eq:Refinement:ODE:OCP-QoI-aquisition-function}.
	
\emph{Max error bound (MEB)}: Generate candidates $\{y_i^+\}_{i=1}^{20}$ using the same scheme as the SD approach, but select the point that maximizes
	\begin{equation}    \label{eq:Refinement:ODE:MEB-acquisition-function} 
	 \big\| \big(
		 P_L(y_i^+; Y_c), \;
		 P_D(y_i^+; Y_c), \;
		 P_M(y_i^+; Y_c)
		\big)^T  \big\|_2,   \quad i = 1, \dots, 20,
	\end{equation}
	where $P_L$, $P_D$, $P_M$ represent the pointwise part of the error bound \eqref{thm:Surrogate:RKHS:derivative-error-bound} with $\alpha = (0, \dots, 0)$ corresponding to $C_L$, $C_D$, $C_M$ respectively. Because the three component functions are approximated by the same kernel and use the same samples, $P_L$, $P_D$, and $P_M$ are all identical in this example, and so this is equivalent to simply maximizing $P_L(y_i^+; Y_c)$. This is similar to maximum uncertainty sampling in the context of Bayesian/Gaussian process modeling.

\emph{Latin hypercube (LH)}: In refinement step $r$,  generate $N + r$ samples in ranges \eqref{eq:Refinement:ODE:ranges} using the Latin hypercube method.
	       This is not an adaptive method, but is useful to determine whether leveraging sensitivity information leads to faster convergence compared to selecting samples randomly.

The left plot in \cref{fig:Refinement:ODE:error-comparison} shows the minimum value of \eqref{eq:Refinement:ODE:OCP-QoI-aquisition-function} over the candidates $Y_{\rm cand}$ 
for the selection of the $r$-th refinement point using the sensitivity-driven approach.
This minimum value of \eqref{eq:Refinement:ODE:OCP-QoI-aquisition-function}  is denoted by $| \delta q_{\rm UB}|$.
The right plot in  \cref{fig:Refinement:ODE:error-comparison} shows the relative error between the downrange QoI computed
with the current model, $q_c$, and the downrange QoI computed with the true model, $q_*$, after $r$ refinements for the three approaches. 
We immediately observe that the SD approach significantly outperforms the other approaches, and the sensitivity-based QoI error bound $| \delta q_{\rm UB}|$ follows
roughly the same trend as the actual QoI error $| q_c - q_*|$ when the SD approach is used, giving numerical evidence that our sensitivity-based acquisition function is useful for model refinement.
\begin{figure}[!htb]
    \centering
    \includegraphics[width=0.40\textwidth]{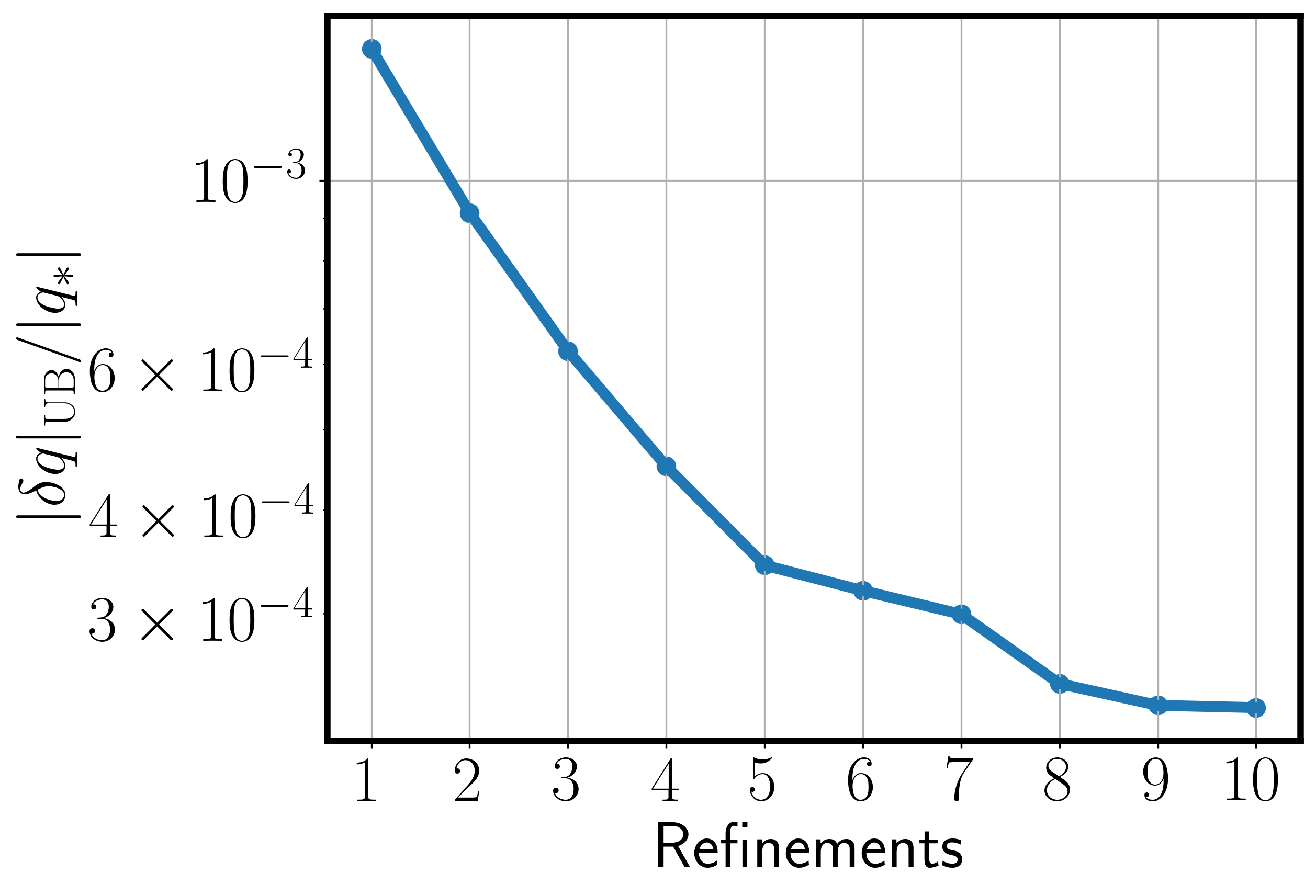} \hfill
    \includegraphics[width=0.50\textwidth]{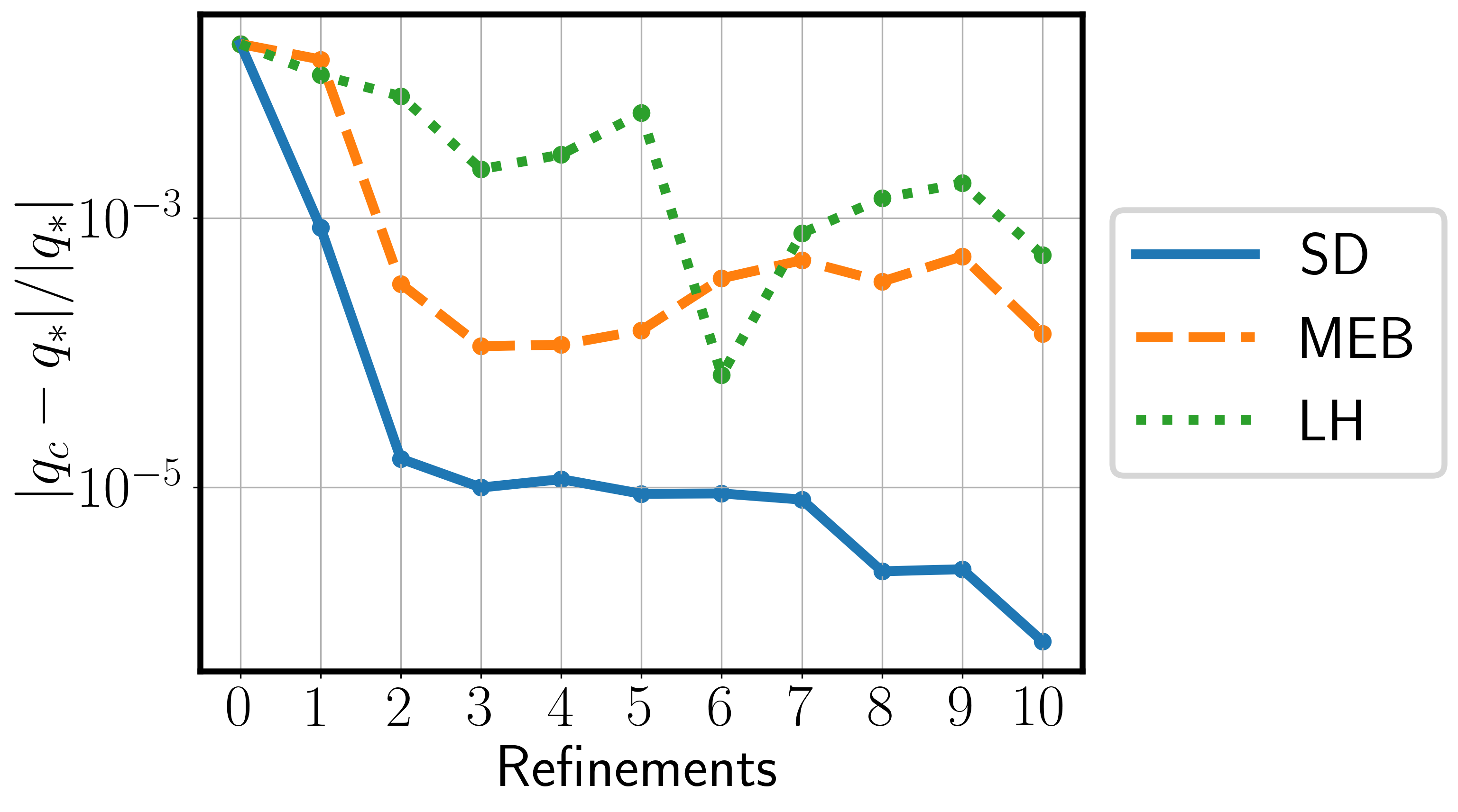}
    \vspace*{-1ex}
    \caption{Sensitivity-based QoI error bounds $| \delta q_{\rm UB}|$ for $r$-th refinement and QoI errors $| q_c - q_*|$ after $r$ refinements for the three approaches.
    The sensitivity-based QoI error bound $| \delta q_{\rm UB}|$ follows roughly the same trend as the actual QoI error $| q_c - q_*|$ when the SD approach is used,
    indicating that our sensitivity-based acquisition function is useful for model refinement. Our QoI-based SD approach significantly outperforms the other approaches.}
    \label{fig:Refinement:ODE:error-comparison}
\end{figure}

\cref{fig:Refinement:ODE:selection} shows the refinement points selected by the three approaches (stopping after $r = 10$ refinements). 
\begin{figure}[!htb]
    \centering
    \includegraphics[width=0.7\textwidth]{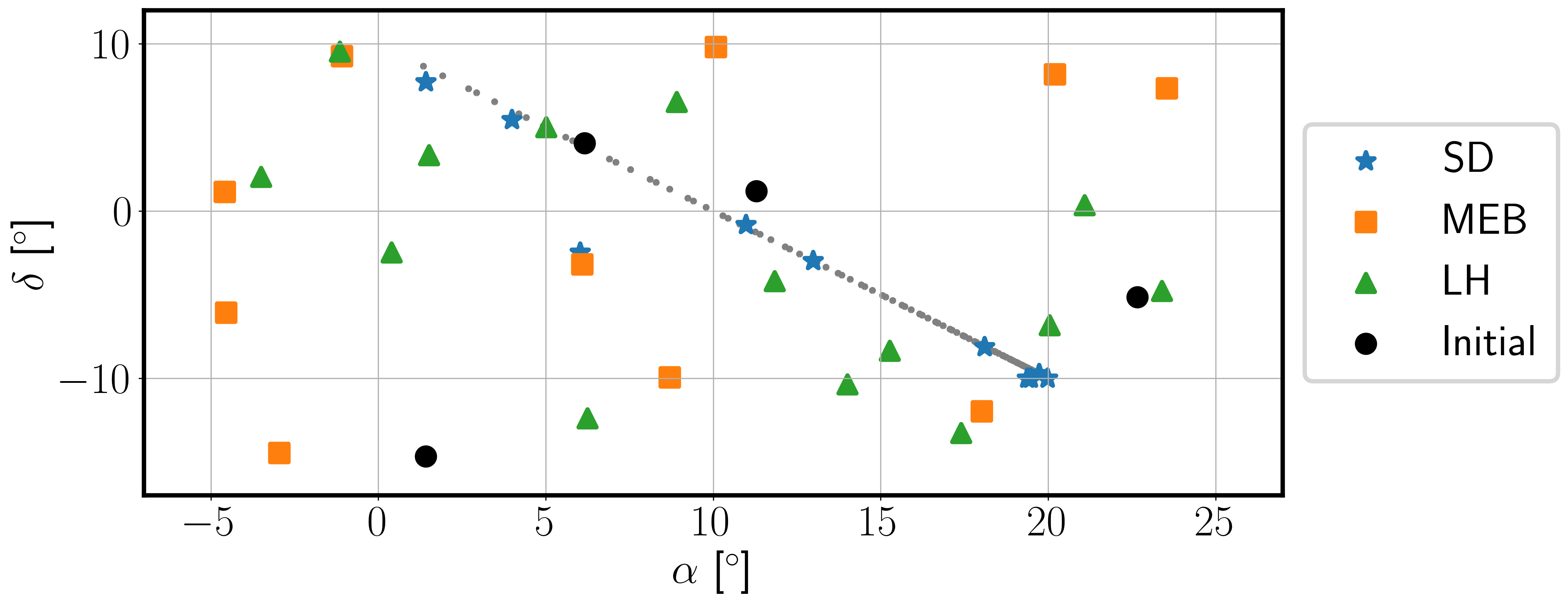}
    \vspace*{-1ex}
    \caption{The small gray dots indicate $(\alpha, \delta)$ pairs encountered at timesteps along the trajectory computed with the true model. 
    Refinement points selected by SD, MEB, LH are shown as blue stars, orange squares, and green triangles respectively.
    The SD approach selects refinement points near the trajectory computed with the true model, whereas MEB distributes samples more evenly because it tries
    to reduce model error everywhere. The LH approach does not construct nested models, so the initial samples are not included in the final LH model.}
    \label{fig:Refinement:ODE:selection}
\end{figure}
The gray dots in \cref{fig:Refinement:ODE:selection} indicate the $(\alpha, \delta)$ pairs encountered along the trajectory computed with the true model 
(true trajectory, in short) at the timesteps 
given by the Legendre-Gauss-Radau discretization scheme (except for a few initial timesteps where the angle of attack quickly adjusts from its initial condition of $0^\circ$ to $\sim 20^\circ$).
Since $\delta = -10^\circ$ is the flap angle for most of the trajectory (cf.\ \eqref{eq:Refinement:ODE:initial-conditions} and \eqref{eq:Refinement:ODE:input-control}) and 
since the moment coefficient tends to be small along the trajectory to avoid tumbling, i.e., $C_M^*\big(\balpha(t), \bdelta(t) \big) \approx 10^\circ - \balpha(t) - \bdelta(t) \approx 0$, 
much of the true trajectory is near $(\alpha, \delta) = (20^\circ, -10^\circ)$ and near the line  $\alpha + \delta = 10^\circ$.
The SD approach selects points near the true trajectory, resulting in much better reduction of the QoI error. 
Meanwhile, the MEB approach simply looks for the point with the largest surrogate error bound, prioritizing points near the boundary of the $(\alpha, \delta)$ ranges under consideration. 
These points tend to be far away from the initial samples and the points along the trajectory, often failing to reduce the QoI error or even increasing it. 
This may be observed directly in \cref{fig:Refinement:ODE:trajectories}, which shows the trajectories obtained after the first refinement for the three approaches.
\begin{figure}[H]
    \centering
    \includegraphics[width=0.43\textwidth]{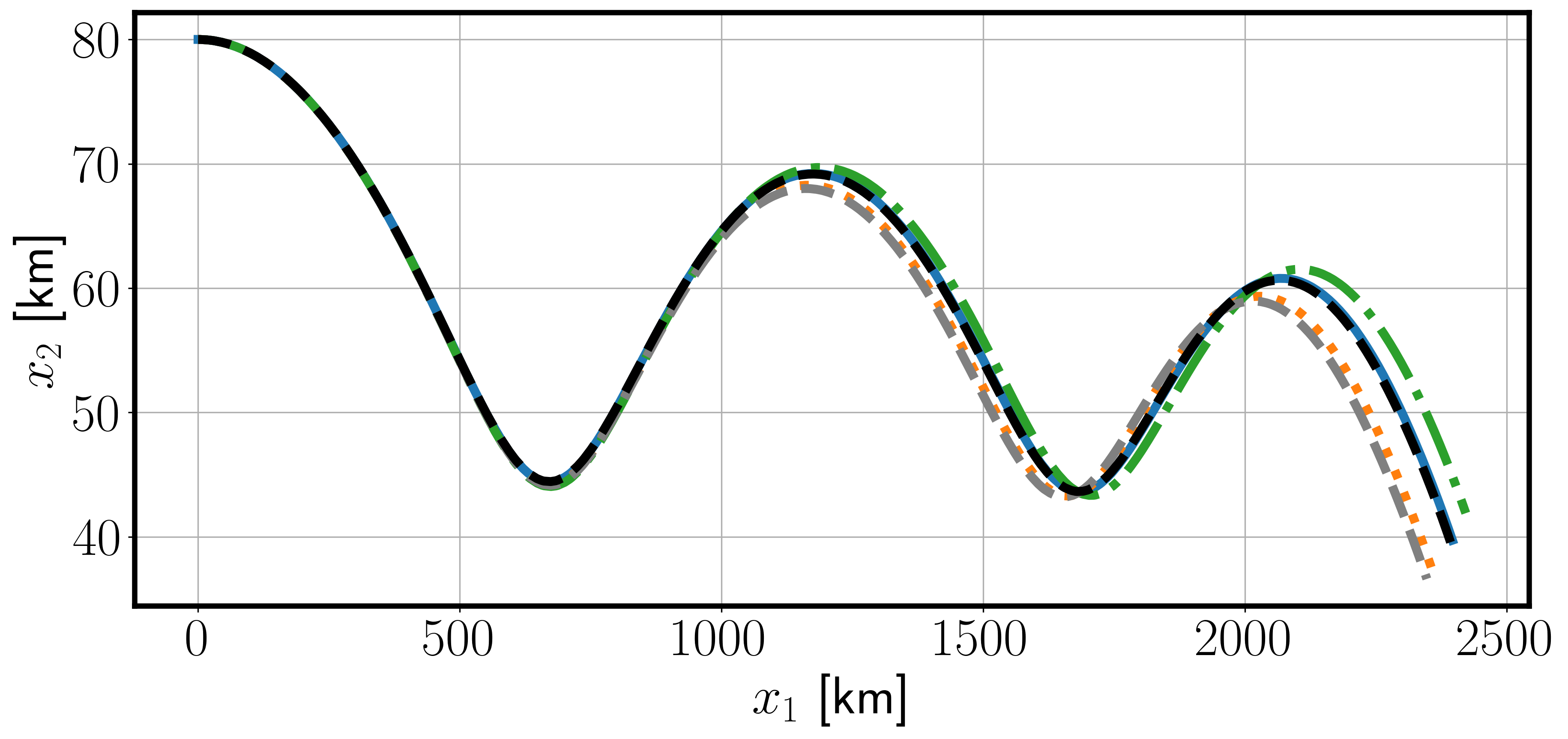} \hfill
    \includegraphics[width=0.56\textwidth]{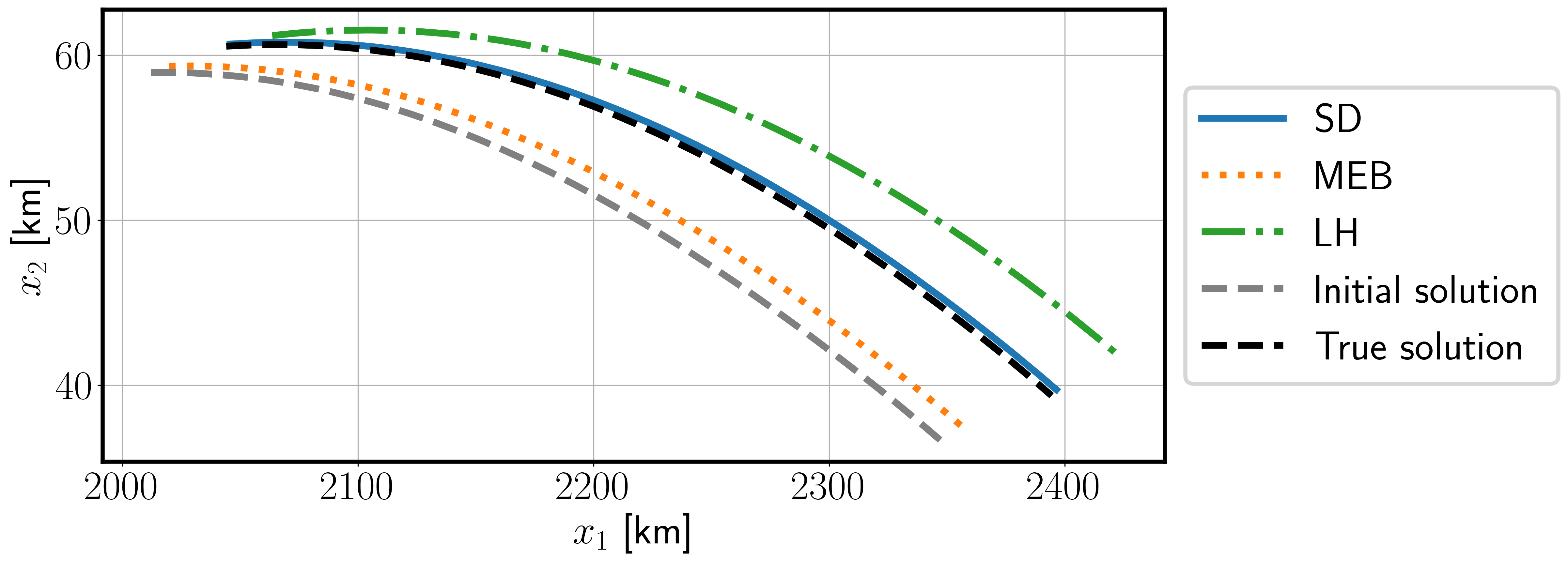}
    \vspace*{-3ex}
    \caption{Trajectories obtained after first refinement using each approach. The right plot focuses on the tail of the trajectory. 
                  After the first refinement, the SD refined trajectory is already close to the trajectory with the true model, whereas
                   the MEB and LH  refined trajectories still differ noticeably from the trajectory with the true model.
    }
    \label{fig:Refinement:ODE:trajectories}
\end{figure}

\subsection{Trajectory Optimization} \label{sec:numerics_OCP}
Next, we consider model refinement for the maximum-downrange problem
\begin{subequations} \label{eq:Refinement:OCP:hypersonic-OCP}
\begin{align}
	\min \quad& -\bx_1(T) \\
	\mbox{s.t.} \quad& \textrm{dynamics \eqref{eq:Refinement:ODE:hypersonic_IVP} with model $\bg$}
						 \textrm{ and initial conditions \eqref{eq:Refinement:ODE:initial-conditions}}, \\
						& 20,000 \leq \bx_2(T) \leq 21,000, \\
						& \overline{q}\big(\bx_2(t), \bv(t)\big) \leq 40,000,  & t \in (0,T),\\
						& \dot{Q}_{\rm stag}\big(\bx_2(t), \bv(t)\big) \leq 6 \times 10^6, & t \in (0,T), \\
						& \bx_2(t) \geq 20,000, & t \in (0,T),  \\
						& -30 \pi / 180 \leq \bgamma(t) \leq 30 \pi / 180,  & t \in (0,T),\\
					        &    -5 \pi / 180 \leq \balpha(t) \leq 20 \pi / 180, & t \in (0,T),\\
						& -10 \pi / 180 \leq \bdelta(t) \leq 15 \pi / 180, & t \in (0,T),
\end{align}
\end{subequations}
with states \eqref{eq:Refinement:ODE:hypersonic-states} and control $\bu = \ddot\bdelta$.
The dynamic pressure $\overline{q}$ is given as in \eqref{eq:Refinement:ODE:DP}, and the stagnation heat rate is
\[
	\dot{Q}_{\rm stag}(x_2, v) = C \rho(x_2)^{0.5} v^3 \textrm{ [W/m$^2$]}, 	\qquad C = 37.356.
\]
The final time $T$ is variable and determined as part of the solution of  \eqref{eq:Refinement:OCP:hypersonic-OCP}.
Using a standard technique (see, e.g.,  \cite[Sec.~1.2.1]{MGerdts_2012a}),
the problem \eqref{eq:Refinement:OCP:hypersonic-OCP} may be converted into a fixed-time optimal control problem 
by converting the dynamics to normalized time, 
\[
	\frac{d}{d\tau} \bx(T \tau) = T \, \bff \Big(T \tau, \bx(T \tau), \bu(T \tau), \bp, \, \bg \big(T \tau, \bx(T \tau), \bu(T \tau), \bp \big) \Big), \qquad \aall \tau \in (0, 1),
\]
 making $T$ an optimization variable which will be entered as a component in $\bp$.  In the problem \eqref{eq:Refinement:OCP:hypersonic-OCP} the duration is the only parameter, i.e., $\bp = T$.

Because of the presence of path constraints and inequality constraints on some state variables, the problem \eqref{eq:Refinement:OCP:hypersonic-OCP} is not in the form \eqref{eq:Sensitivity:OCP:OCP}.
Thus, we cannot perform our sensitivity computations on it directly. Instead, we use the following reference trajectory tracking problem to compute sensitivities.
Given a model $\bg$, we solve \eqref{eq:Refinement:OCP:hypersonic-OCP} to compute $\bx^{\rm ref} = \bx^{\rm ref}(\bg)$, $\bu^{\rm ref} = \bu^{\rm ref}(\bg)$, $\bp^{\rm ref} := T^{\rm ref} = T^{\rm ref}(\bg)$.
With positive semidefinite matrix $\BQ$ and positive definite matrices $\BR_u$ and $\BR_p$ we consider the reference tracking problem
\begin{equation} \label{eq:Refinement:OCP:hypersonic-RT}
\begin{aligned}
	\min \quad
	& \int_{0}^{1} \big( \bx(T \tau) - \bx^{\rm ref}(T^{\rm ref} \tau) \big)^T \BQ \big( \bx(T \tau) - \bx^{\rm ref}(T^{\rm ref} \tau) \big) \\ 
	&\quad + \big( \bu(T \tau) - \bu^{\rm ref}(T^{\rm ref} \tau) \big)^T \BR_u \big( \bu(T \tau) - \bu^{\rm ref}(T^{\rm ref} \tau) \big) \, d\tau  \\ 
	&+ ( \bp - \bp^{\rm ref} )^T \BR_p ( \bp - \bp^{\rm ref} ) \\
	\mbox{s.t.} \quad& \textrm{dynamics \eqref{eq:Refinement:ODE:hypersonic_IVP} with model $\bg$}
						 \textrm{ and initial conditions \eqref{eq:Refinement:ODE:initial-conditions}}.
\end{aligned}
\end{equation}
Because $\bx^{\rm ref} = \bx^{\rm ref}(\bg)$, $\bu^{\rm ref} = \bu^{\rm ref}(\bg)$, $\bp^{\rm ref} := T^{\rm ref} = T^{\rm ref}(\bg)$ solve the dynamics \eqref{eq:Refinement:ODE:hypersonic_IVP} with model $\bg$
and $\bx^{\rm ref} = \bx^{\rm ref}(\bg)$ satisfies the initial conditions \eqref{eq:Refinement:ODE:initial-conditions},
the solution of the reference tracking problem \eqref{eq:Refinement:OCP:hypersonic-RT} is the reference trajectory, $\bx(\bg) = \bx^{\rm ref}$, $\bu(\bg) = \bu^{\rm ref}$, $\bp(\bg) = T(\bg) = T^{\rm ref}$.
We compute the sensitivity of the solution of the reference tracking problem \eqref{eq:Refinement:OCP:hypersonic-RT} with respect to $\bg$.
Note that in our sensitivity computation, $\bx^{\rm ref} $, $\bu^{\rm ref}$, $\bp^{\rm ref} := T^{\rm ref}$ are considered fixed (not a function of $\bg$), and
we compute the sensitivity of the solution of \eqref{eq:Refinement:OCP:hypersonic-RT} with respect to changes in the model $\bg$ in the dynamics \eqref{eq:Refinement:ODE:hypersonic_IVP}.
This sensitivity information describes the ability to track the reference trajectory when the aerodynamic coefficient models differ from the ones used to compute the reference trajectory.
Note, however, that this does not tell us how sensitive the solution of \eqref{eq:Refinement:OCP:hypersonic-OCP} is to the coefficients.

For model refinement, we use the downrange QoI
\begin{equation} \label{eq:Refinement:OCP:hypersonic-RT-QoI}
      q(\bg) := q\big(\bx(\cdot \, ; \bg), \bu(\cdot \, ; \bg), \bp(\bg), \bg \big) = \bx_1(T),
\end{equation}
where $\big(\bx(\cdot \, ; \bg), \bu(\cdot \, ; \bg), \bp(\bg)\big)$ solves  \eqref{eq:Refinement:OCP:hypersonic-RT}.

In the following computations we use \eqref{eq:Refinement:OCP:hypersonic-RT} with
\[
	\BQ = \mathrm{diag}(10^{-3}, 10^1, 0, 0, 10^1, 0, 0, 0), \qquad \BR_u = 10^8, \qquad \BR_p = 10^{-3},
\]
with the ordering of the states given by \eqref{eq:Refinement:ODE:initial-conditions}.

The sensitivity-driven (SD) model refinement approach proceeds as follows:
\begin{enumerate}
	\item Construct the initial $\bg_c = (C_L, C_D, C_M)$  from $N = 4$ Latin hypercube samples in ranges \eqref{eq:Refinement:ODE:ranges}.
                 Use $\gamma_i = \| (\bg_c)_i \|_{\cH_\bk(\Omega)}$  as the computable estimate for $\| (\bg_*)_i \|_{\cH_\bk(\Omega)}$. 
                 The constant $\gamma_i$ is kept fixed from this point onwards, even as the model is refined.
                 	
	\item Solve \eqref{eq:Refinement:OCP:hypersonic-OCP} with the model $\bg_c$ to obtain a reference trajectory.
	
	\item Use the reference trajectory from step 2 in the reference tracking problem \eqref{eq:Refinement:OCP:hypersonic-RT} with model $\bg_c$
	         and computable model bound functions \eqref{eq:Refinement:intro:error-indicator-drivative-RKHS}
	         to
	         compute the bound of \cref{thm:Refinement:OCP:OCP-acquisition-function} for each candidate $y_+ = (\alpha_+, \delta_+)$,
	         and select the one that minimizes this bound.
	
	\item Use $(\alpha_+, \delta_+)$ from step 3 to compute a refined model $\bg_+$.
		
	\item Set $\bg_c \leftarrow \bg_+$ and repeat steps 2-4 as desired.
\end{enumerate}
The MEB approach proceeds analogously, but uses the acquisition function \eqref{eq:Refinement:ODE:MEB-acquisition-function} in step 3
to compute the new $y_+ = (\alpha_+, \delta_+)$. The acquisition function \eqref{eq:Refinement:ODE:MEB-acquisition-function} does not
consider the impact of model change on the QoI \eqref{eq:Refinement:OCP:hypersonic-RT-QoI}.
The LH approach does not use an acquisition function at all and randomly selects points with which the new model is constructed.

\cref{fig:Refinement:OCP:error-comparison} shows the performance of the refinement approach.
\begin{figure}[!htb]
\centering
\includegraphics[width=0.40\textwidth]{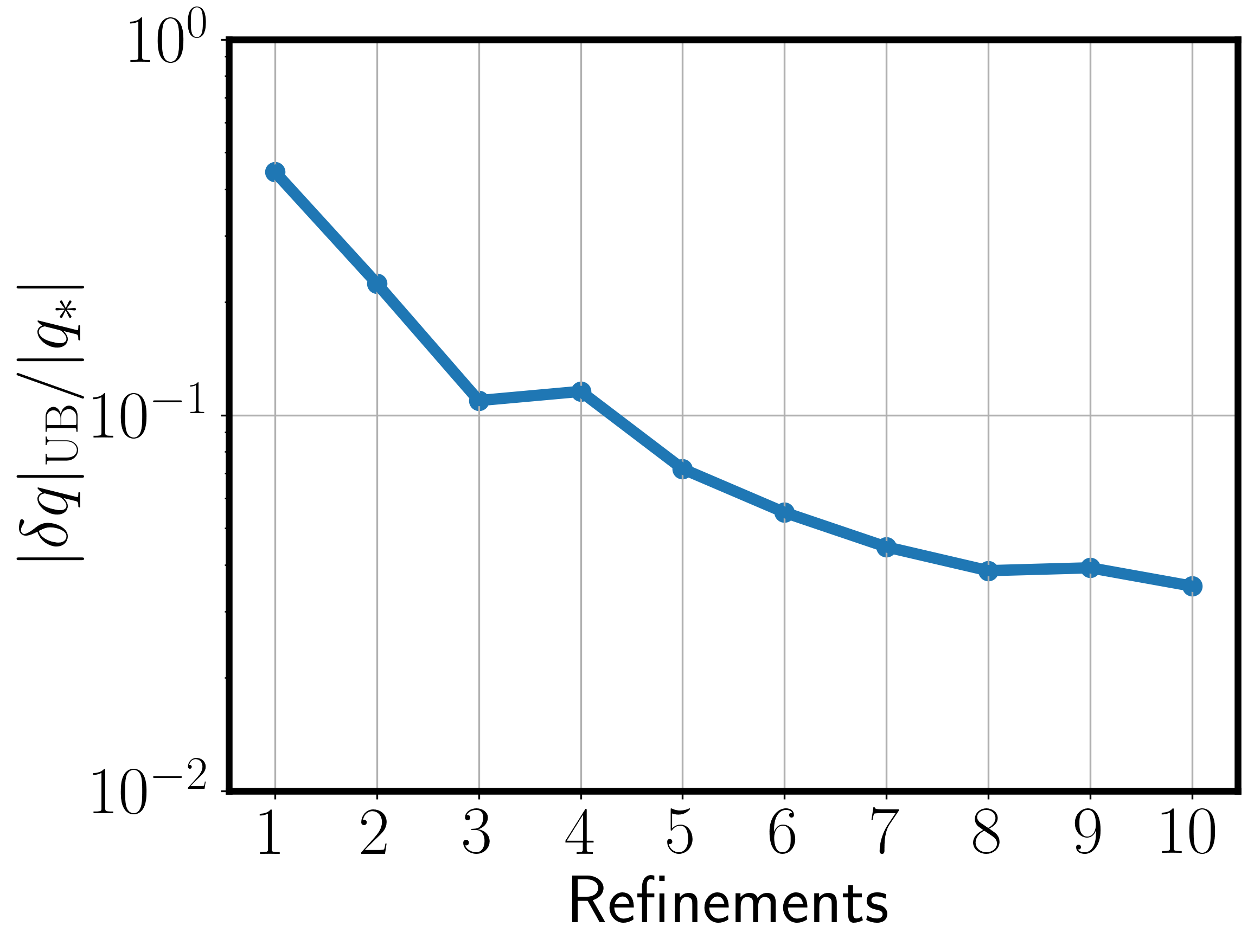} \hfill
\includegraphics[width=0.53\textwidth]{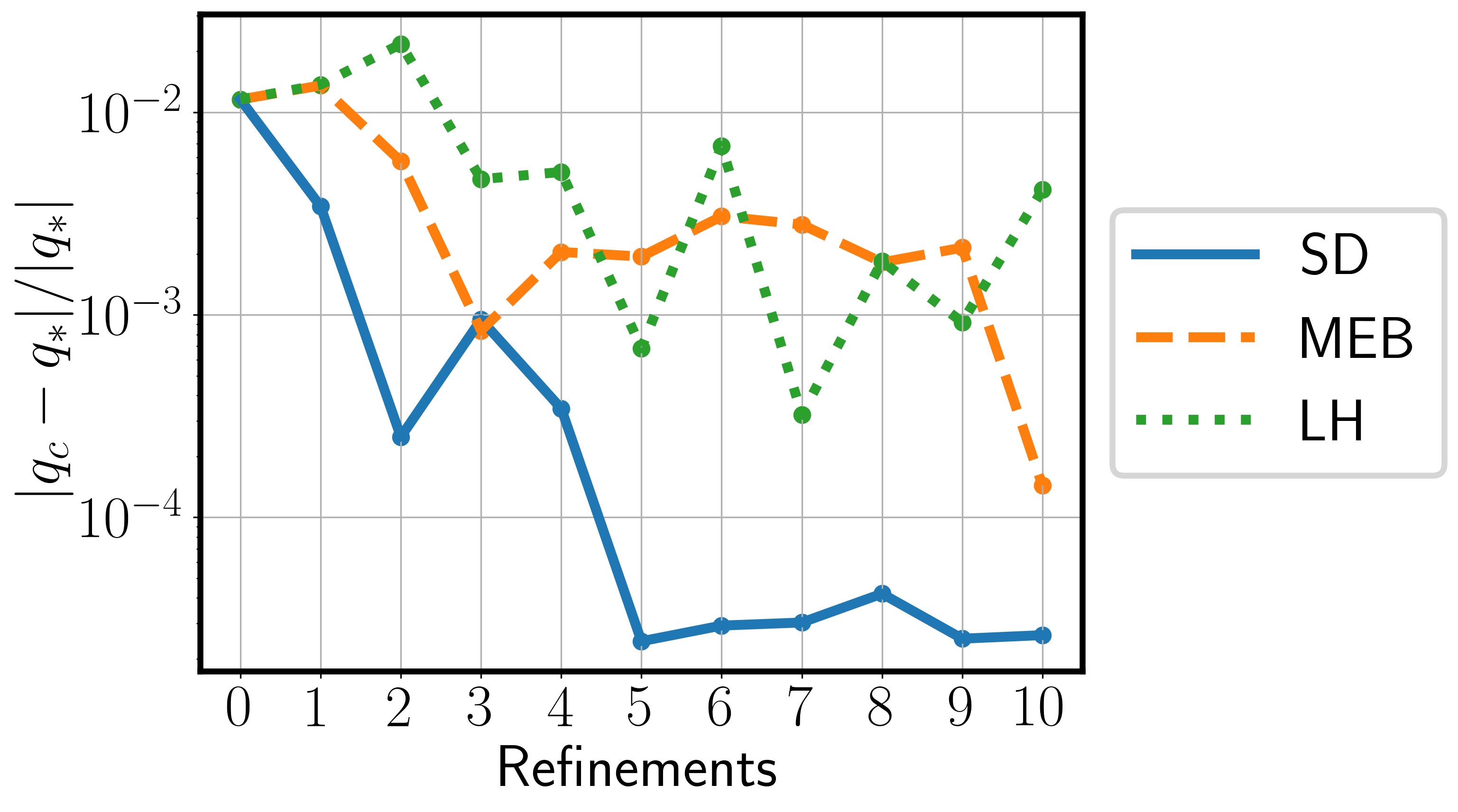}
\vspace*{-1ex}
\caption{Sensitivity-based QoI error bounds for $r$-th refinement and relative QoI errors after $r$ refinements for the three approaches.
        The SD approach significantly outperforms the other approaches.  
        The error $| q_c - q_*|/|q_*|$ for the SD approach does not decrease monotonically, and there is less agreement between $|\delta q_{\rm UB}|/|q_*|$   
        and $| q_c - q_*|/|q_*|$ because sensitivities of the solution of  \eqref{eq:Refinement:OCP:hypersonic-RT} with respect to changes 
        in the model $\bg$ in the dynamics \eqref{eq:Refinement:ODE:hypersonic_IVP} are used, but as the current model $\bg_c$ is updated
        to the new model $\bg_+$, the reference trajectory changes as well. 
}
\label{fig:Refinement:OCP:error-comparison}
\end{figure}
In \cref{fig:Refinement:OCP:error-comparison}, $q_c$ refers to the downrange QoI \eqref{eq:Refinement:OCP:hypersonic-RT-QoI}
computed with the reference trajectory $\bx^{\rm ref} = \bx^{\rm ref}(\bg_c)$, 
$\bu^{\rm ref} = \bu^{\rm ref}(\bg_c)$, $\bp^{\rm ref} := T^{\rm ref} = T^{\rm ref}(\bg_c)$, and the model $\bg_c$.
The quantity  $|\delta q_{\rm UB}|$ is the lowest bound computed in step 3 above, which gives an estimate for the improvement
achieved by replacing $\bg_c$ with $\bg_+$. 
To assess the effect of model refinement, we solve \eqref{eq:Refinement:OCP:hypersonic-RT} with the 
current reference trajectory $\bx^{\rm ref} = \bx^{\rm ref}(\bg_c)$, $\bu^{\rm ref} = \bu^{\rm ref}(\bg_c)$, $\bp^{\rm ref} := T^{\rm ref} = T^{\rm ref}(\bg_c)$,
and with model $\bg = \bg_*$. The QoI \eqref{eq:Refinement:OCP:hypersonic-RT-QoI} computed with this trajectory is referred to as $q_*$ in \cref{fig:Refinement:OCP:error-comparison}.
Note that since $q_*$ is computed with the solution of  \eqref{eq:Refinement:OCP:hypersonic-RT} with fixed model $\bg = \bg_*$ in the dynamics,
but the reference trajectory $\bx^{\rm ref} = \bx^{\rm ref}(\bg_c)$, $\bu^{\rm ref} = \bu^{\rm ref}(\bg_c)$, $\bp^{\rm ref} := T^{\rm ref} = T^{\rm ref}(\bg_c)$
changes as the model $\bg_c$ changes, the value $q_*$ changes with refinement level.
Again, the right plot in \cref{fig:Refinement:OCP:error-comparison} shows that the SD approach significantly outperforms the other approaches.
However, the error $| q_c - q_*|/|q_*|$ for the SD approach is not decreasing monotonically, and there is less agreement between $|\delta q_{\rm UB}|/|q_*|$   and
$| q_c - q_*|/|q_*|$.  A reason is that we compute sensitivities of the solution of  \eqref{eq:Refinement:OCP:hypersonic-RT} with respect to changes 
in the model $\bg$ in the dynamics \eqref{eq:Refinement:ODE:hypersonic_IVP}, but as we update the current model $\bg_c$ with the new model
$\bg_+$, the reference trajectory changes as well.
For the other two approaches, there is no expectation that the error $| q_c - q_*|/|q_*|$ decreases monotonically.

\cref{fig:Refinement:OCP:selection} shows the refinement points selected by each refinement approach similarly to \cref{fig:Refinement:ODE:selection}, 
and the gray dots indicate the $(\alpha, \delta)$ pairs corresponding to the optimal trajectory of \eqref{eq:Refinement:ODE:hypersonic-states} 
computed with the true model $\bg_*$. 
\begin{figure}[!htb]
\centering
\includegraphics[width=0.7\textwidth]{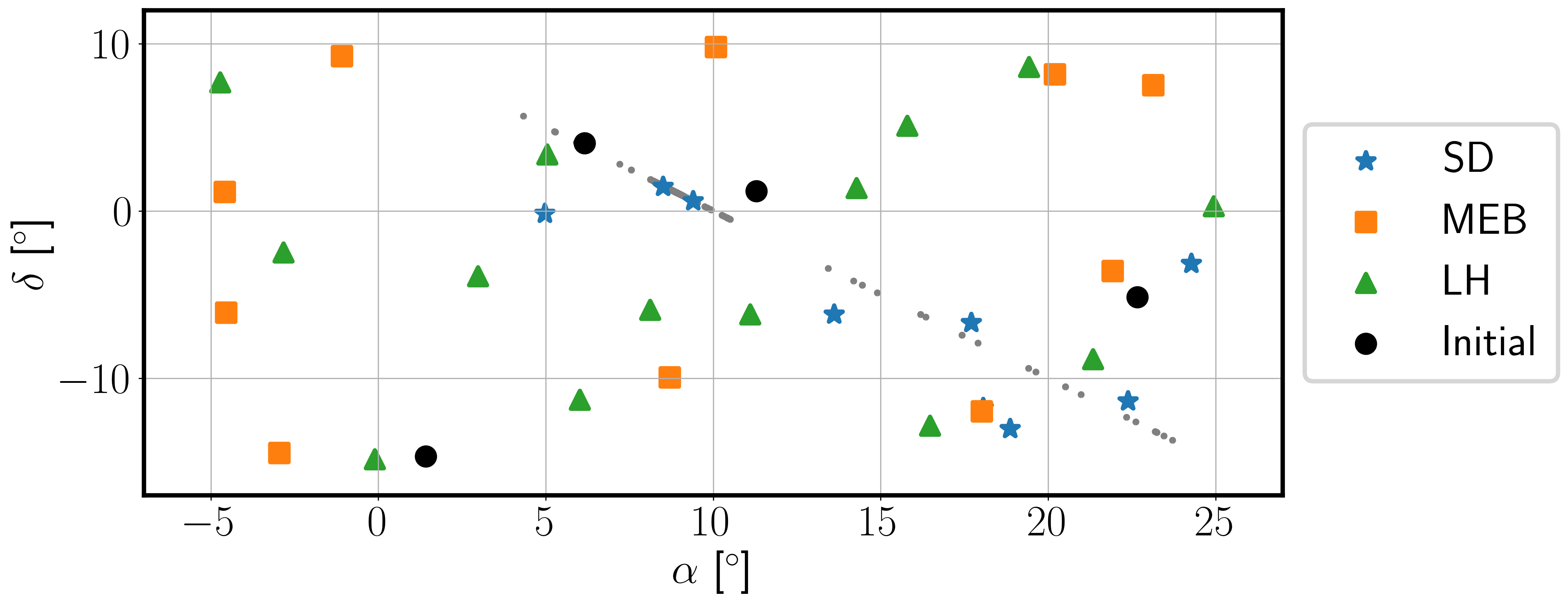}
\vspace*{-1ex}
\caption{The small gray dots indicate $(\alpha, \delta)$ pairs encountered at timesteps along the trajectory computed with the true model. 
Refinement points selected by SD, MEB, LH are shown as blue stars, orange squares, and green triangles respectively.
The SD selects refinement points near the trajectory computed with the true model, whereas MEB distributes samples more evenly because it tries
to reduce model error everywhere. The LH approach does not construct nested models, so the initial samples are not included in the final LH model.}
\label{fig:Refinement:OCP:selection}
\end{figure}
Because the reference trajectories approach this trajectory as the model $\bg_c$ better approximates $\bg_*$, we expect good  refinement points 
to be near the gray points, and indeed the SD approach selects such points.
In contrast, as in the simulation case, the MEB approach simply looks for the point with the largest surrogate error bound, 
prioritizing points near the boundary of the $(\alpha, \delta)$ ranges under consideration. 
These points tend to be far away from the initial samples and the points along the trajectory.
The LH approach selects samples randomly distributed in the $(\alpha, \delta)$ ranges under consideration.

Finally, \cref{fig:Refinement:OCP:trajectories} shows the reference trajectories obtained after the first refinement using each approach. 
\begin{figure}[!htb]
\centering
\includegraphics[width=0.43\textwidth]{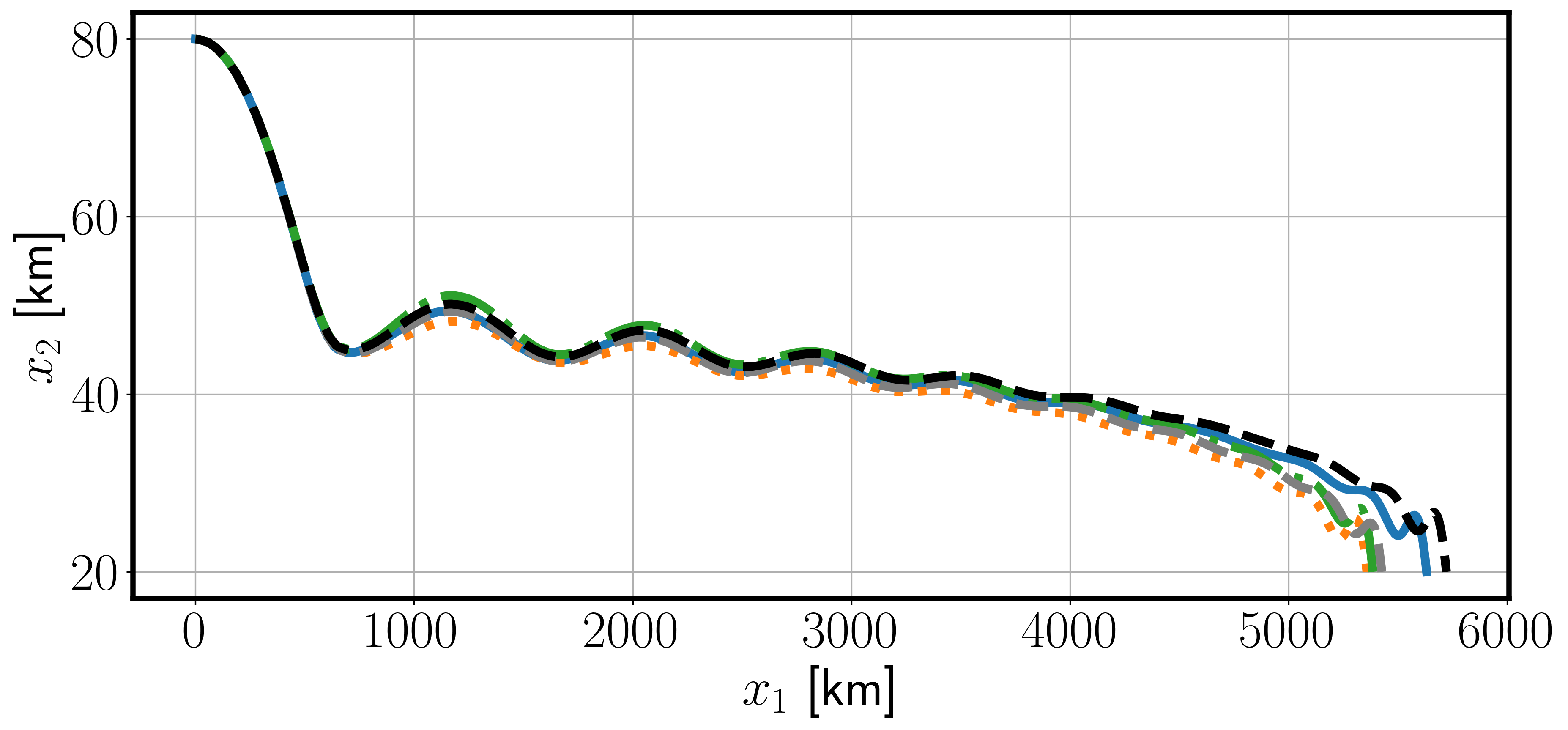}
\includegraphics[width=0.56\textwidth]{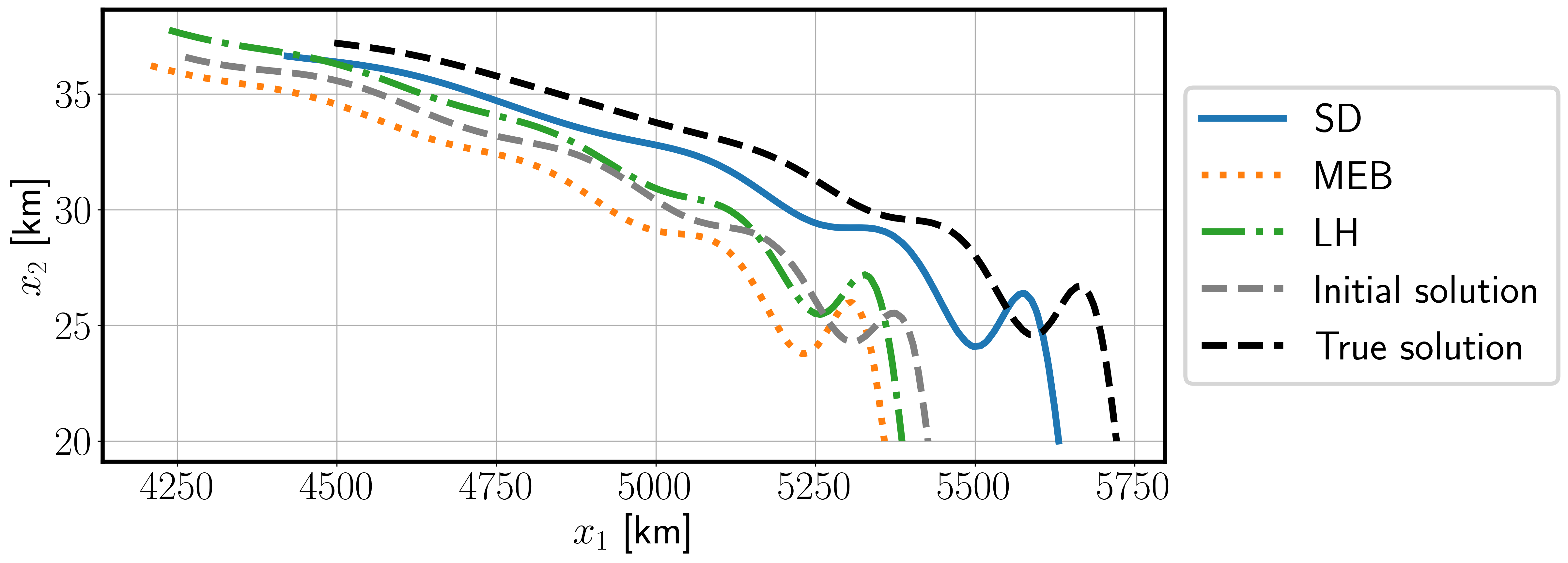}
\vspace*{-4ex}
\caption{Trajectories obtained after first refinement using each approach. The right plot focuses on the tail of the trajectory. 
              After the first refinement, the SD refined trajectory is closer to the trajectory with the true model, whereas
               the MEB and LH  refined trajectories still differ substantially from the trajectory with the true model.
}
\label{fig:Refinement:OCP:trajectories}
\end{figure}
Once again, the SD approach gives a trajectory that is closer to the true solution. As mentioned before, we do not compute sensitivities of the original OCP \eqref{eq:Refinement:OCP:hypersonic-OCP}, so it is not obvious why refining the model based on the reference tracking problem should give a more accurate solution to the original problem. One possible reason is that the SD approach aims to improve the ability of a controller to track the reference trajectory computed with the current model when the actual dynamics are subject to the true model, and the true trajectory can be perfectly tracked with $\bg = \bg_*$, i.e., the optimal objective value in \eqref{eq:Refinement:OCP:hypersonic-RT} is zero when taking the solution of \eqref{eq:Refinement:OCP:hypersonic-OCP} with $\bg = \bg_*$ as the reference trajectory. This suggests that in order to improve the ability of a controller to track the reference trajectory, the reference trajectory must be close to the true trajectory.

\section{Conclusions and Future Work} \label{sec:conclusion}

We have developed an approach for surrogate model refinement for the simulation of dynamical systems and the solution of optimization problems governed by dynamical systems 
in which surrogates replace expensive-to-compute state- and control-dependent component functions in the dynamics or in the objective function. 
A new acquisition function was developed to guide the selection of additional data points for evaluation of the true component function.
This acquisition function is based on two ingredients.
First, we use the sensitivity of the solution to the simulation or optimization problem with respect to variations in the component function to derive 
the sensitivity of a solution-dependent QoI with respect to the component function.
This sensitivity is used as an estimate between the QoI evaluated at a surrogate and the QoI evaluated at the true component function.
The second ingredient is an efficient-to-evaluate bound for the error between a surrogate and the true component function were the surrogate to be refined at a given point.
We have shown that surrogates computed via kernel interpolation possess such bounds.
Our acquisition function evaluated at a sample point
is computed as the maximum absolute value of the sensitivity of the QoI in the direction of a perturbation,
where the maximum is taken over all perturbations that satisfy the error bound between a surrogate computed with the additional sample and the true component function.
This maximization problem is a linear program in function space, which has an analytical solution.
A new sample point is computed by evaluating the acquisition function at candidate samples and selecting the sample that gives the smallest acquisition function value.

The proposed adaptive surrogate model refinement performed well on the problem of simulating the trajectory of a hypersonic vehicle and
the problem of optimizing the trajectory of a hypersonic vehicle. In both cases, a few additional samples (less than five in our examples) were sufficient
to substantially improve the quality of the solution computed with the refined surrogate model over the quality of the solution computed with the initial
surrogate.

While our approach performs well in the examples provided in this work, there are no theoretical results yet that quantify the improvement when a sample is added, 
let alone theoretical results that prove the convergence of our approach.  
The difficulty in establishing such results is that our approach does not aim to generate surrogates that approximate the true component function everywhere.
Instead, the surrogates approximate the true component function where it is important for the solution-dependent QoI, but the error between
the surrogate and the true component function can be large in other regions.
Theoretical analysis of the performance of our approach is part of future work, as is the application of our approach to other examples.

Because our acquisition functions rely on a first-order sensitivity-based approximation of the QoI error, 
we expect that the initial surrogate for $\mathbf{g}$ must be sufficiently accurate along the states encountered 
by the trajectory for the acquisition functions to be useful. In the absence of a priori knowledge of this region, 
constructing such a surrogate may require sampling $\mathbf{g}_*$ over a large portion of the state space, especially when the trajectory covers a large portion of the state space.
In this scenario, in the worst case, the number of required samples may suffer from the curse of dimensionality. 
However, if prior knowledge is available that restricts the trajectory to a small subset of the state space, then the 
effective dimensionality of the problem is reduced, and the sampling burden can be significantly mitigated.
Overall, our approach aims to combat the curse of dimensionality by approximating the true component
function locally where needed, but not globally.

One can also explore the extension of our approach to partial differential equations
(PDEs) or PDE-constrained optimization. To extend the sensitivity analysis with respect to component functions 
to the PDE setting, where component functions may represent nonlinear constitutive equations, the state space
and the function space for the component functions must be carefully chosen. 
The selection of these spaces also impacts surrogate model design. If the component function depends on the pointwise evaluations
of the PDE solution in space and time, then, assuming sufficient regularity of the PDE solution, the present kernel
interpolation framework could be used. However, e.g., for time-dependent PDEs the component function 
may also depend on the Hilbert- or Banach space $\mathcal{X}$-valued PDE solution $\bx(t) \in \mathcal{X}$ 
at time $t$, which means one needs surrogate functions defined on $\mathcal{X}$ and corresponding error bounds.
Extensions in these directions are interesting avenues for future work.


\newpage

\setcounter{section}{0}
\renewcommand\thesection{SM\arabic{section}}

\setcounter{figure}{0}

\setcounter{page}{1}
\renewcommand{\thepage}{SM.\arabic{page}}

\begin{center}
{\bf SUPPLEMENTARY MATERIALS: SENSITIVITY-DRIVEN ADAPTIVE SURROGATE MODELING FOR SIMULATION AND OPTIMIZATION OF DYNAMICAL SYSTEMS}

\medskip
JONATHAN R.\ CANGELOSI AND MATTHIAS HEINKENSCHLOSS 

\medskip
\end{center}

In this supplement, we discuss an approach to adapt the model based on improving 
the error in the solution $\bx$ of the simulation problem, 
or on improving the error in the solution $\bx, \bu, \bp$ of the optimal control problem.

\section{Model Refinement Based on Solution Improvement} \label{sec:model_refinement_sol}
\Cref{sec:model_refinement} of the main paper describes our approach to adapt the model based on improving 
a QoI. In principle, one can also use the error in the solution $\bx$ of the simulation 
\eqref{eq:problem_formulation:IVP}, or the error in the solution $\bx, \bu, \bp$ of the 
optimal control problem \eqref{eq:Sensitivity:OCP:OCP}, respectively, to refine the model.
We will sketch the solution-based refinement approach here.
The main difference between the QoI-based refinement approach of \cref{sec:model_refinement}
and the solution-based refinement approach presented here is that 
the acquisition functions  \eqref{eq:Refinement:ODE:OCP-QoI-aquisition-function}
and \eqref{eq:Refinement:OCP:OCP-acquisition-function} in the QoI-based refinement approach
have an easily computable analytical expression, whereas the acquisition functions in the 
solution-based refinement approach are objective function values of some optimization 
problems that need to be solved numerically, and that, in the discretized case, are known to be NP-hard.
In practice, tailored solution approaches to these optimization problems are sometimes
possible, but even then evaluation of the acquisition functions via the numerical solution of
an optimization problem is computationally more expensive than the evaluation of
the acquisition functions \eqref{eq:Refinement:ODE:OCP-QoI-aquisition-function}
and \eqref{eq:Refinement:OCP:OCP-acquisition-function} in the QoI-based case.

\subsection{Simulation}  \label{sec:Refinement:ODE:ODE-sol}

This section discusses model refinement for simulation problems of the form \eqref{eq:problem_formulation:IVP}.
We continue to use the notation of \cref{sec:model_refinement}.

 Let $\bg_*$ be the true model and $\bx_* := \bx(\cdot \, ; \bg_*)$ the corresponding solution of the ODE \eqref{eq:problem_formulation:IVP} with $\bg = \bg_*$.
 Furthermore, let $\bg_+[y_+]  = \bg(\cdot \, ; Y_+, G_+)$ be the new model constructed using $y_+ \in \Omega$, and
 let $\bx(\cdot \, ; \bg_+[y_+])$ be the solution of \eqref{eq:problem_formulation:IVP} with $\bg = \bg_+[y_+]$.
 Given a weight matrix function  $\BQ \in \big(L^2(I)\big)^{n_x \times n_x}$ that is symmetric positive semidefinite for almost all $t \in I$, 
 we ideally want to select a new point  $y_+ \in \Omega$ such that the solution error 
\begin{align} \label{eq:Refinement:ODE:error-measure-ODE}
	& \| \bx(\cdot \, ; \bg_+[y_+]) - \bx_*(\cdot) \|_\BQ^2 \nonumber \\
	& := \int_{t_0}^{t_f} \big(\bx(t ; \bg_+[y_+]) - \bx_*(t)\big)^T \BQ(t) \big(\bx(t ; \bg_+[y_+]) - \bx_*(t)\big) \, dt
\end{align}
is small.
We approximate
\begin{align} \label{eq:Refinement:ODE:approx-error-measure-ODE}
	& \| \bx(\cdot \, ; \bg_+[y_+]) - \bx(\cdot \, ; \bg_*) \|_\BQ^2  \nonumber \\
	& \approx \int_{t_0}^{t_f}
          [\bx_\bg(\bg_+[y_+])  (\bg_+[y_+] - \bg_*)](t)^T \BQ(t) \, [\bx_\bg(\bg_+[y_+]) (\bg_+[y_+] - \bg_*)](t) \, dt \nonumber \\
        & \approx \int_{t_0}^{t_f}
          [\bx_\bg(\bg_c)  (\bg_+[y_+] - \bg_*)](t)^T \BQ(t) \, [\bx_\bg(\bg_c)  (\bg_+[y_+] - \bg_*)](t) \, dt.
\end{align}

We cannot compute $\bg_+[y_+] - \bg_*$, but we have the post-refinement error bound  \eqref{eq:Refinement:intro:error-indicator}.
Thus, using  the ODE sensitivity result of \cref{thm:Sensitivity:ODE:sensitivity-result-ODE} and \eqref{eq:Refinement:intro:error-indicator},
the optimal objective function value of
\begin{subequations} \label{eq:Refinement:ODE:LQOCP-naive}
\begin{align}
	\sup_{\delta \bx, \delta \bg} \quad &\int_{t_0}^{t_f} \delta \bx(t)^T \BQ(t) \delta \bx(t) \, dt \\
	\mbox{s.t.} \quad & \delta \bx'(t) = \BA_c[t] \delta \bx(t) + \BB_c[t] \delta \bg\big( \by_c(t) \big),  & \aall t \in I, \\
	& \delta \bx(t_0) = 0, \\
	&  \big| \delta \bg_i\big( \by_c(t) \big) \big|  \leq c_i \bepsilon_i\big( \by_c(t); Y_+, G_c \big), 
	& i = 1, \dots, n_g, \quad \aall t \in I,
	    \label{eq:Refinement:ODE:LQOCP-naive-box}
\end{align}
\end{subequations}
gives an upper bound for \eqref{eq:Refinement:ODE:approx-error-measure-ODE}.
The term $\delta \bg\big( \by_c(t) \big)$ represents all possible errors in the surrogate along the current solution $\by_c(t)$ based on the post-refinement error bound \eqref{eq:Refinement:intro:error-indicator}.
Because $\delta \bg\big( \by_c(t) \big)$ is evaluated along the current solution $\by_c(t)$, 
 \eqref{eq:Refinement:ODE:LQOCP-naive} is difficult to implement. 
Therefore, we relax \eqref{eq:Refinement:ODE:LQOCP-naive}  by replacing the trajectory-dependent $\delta \bg\big( \by_c(\cdot) \big)$ by a 
time-dependent function $\bdelta \in \big(L^\infty(I)\big)^{n_g}$, and we drop the unknown constants $c_i$.
Thus, instead of  \eqref{eq:Refinement:ODE:LQOCP-naive}, we solve
\begin{subequations} \label{eq:Refinement:ODE:LQOCP}
\begin{align}
	\sup_{\delta \bx, \delta \bg} \quad &\int_{t_0}^{t_f} \delta \bx(t)^T \BQ(t) \delta \bx(t) \, dt \\
	\mbox{s.t.} \quad & \delta \bx'(t) = \BA_c[t] \delta \bx(t) + \BB_c[t] \bdelta(t),  \qquad \aall t \in I, \\
	& \delta \bx(t_0) = 0, \\
	& - \bepsilon\big( \by_c(t); Y_+, G_c \big) \leq \bdelta(t)\leq \bepsilon\big( \by_c(t); Y_+, G_c \big), & \qquad \aall t \in I. \label{eq:Refinement:ODE:box-constraints}
\end{align}
\end{subequations}
\sloppy In \eqref{eq:Refinement:ODE:LQOCP}, $\bepsilon\big( \by_c(t); Y_+, G_c \big) \in \real^{n_g}$ is the
vector-valued function with components $\bepsilon_i\big( \by_c(t); Y_+, G_c \big)$, $i = 1, \ldots, n_g$, and
the inequality constraints are understood componentwise.

\begin{theorem}   \label{thm:Refinement:ODE:ODE-solution-error-bound}
     If the assumptions of \cref{thm:Sensitivity:ODE:sensitivity-result-ODE} and  \eqref{eq:Refinement:intro:error-indicator} hold,
     and $V( \by_c; Y_+, G_c \big)$ denotes the optimal objective function value of \eqref{eq:Refinement:ODE:LQOCP},
     then the approximate error measure \eqref{eq:Refinement:ODE:approx-error-measure-ODE} is bounded above by
     \begin{equation}  \label{eq:Refinement:ODE:ODE-solution-error-bound}
         \int_{t_0}^{t_f}
          [\bx_\bg(\bg_c)  (\bg_+[y_+] - \bg_*)](t)^T \BQ(t) [\bx_\bg(\bg_c)  (\bg_+[y_+] - \bg_*)](t) \, dt 
         \le c^2 \, V( \by_c; Y_+, G_c \big),
     \end{equation}
     where  $c = \max\{ c_1, \ldots, c_{n_g}\}$ and $c_1, \ldots, c_{n_g}$ are the constants in  \eqref{eq:Refinement:intro:error-indicator}.
\end{theorem}
\begin{proof}
        \sloppy
	Clearly $\bdelta(\cdot) = c^{-1} (\bg[y_+] - \bg_*)\big( \by_c(\cdot) \big) \in \LL^{n_g}$ and its corresponding state 
	$\delta \bx = c^{-1}  \bx_\bg(\bg_c) (\bg[y_+] - \bg_*)$ are feasible for \eqref{eq:Refinement:ODE:LQOCP} due to \eqref{eq:Refinement:intro:error-indicator}. The bound immediately follows.
\end{proof}
\cref{thm:Refinement:ODE:ODE-solution-error-bound} is an adaptation of \cite[Thm.~3.3]{JRCangelosi_MHeinkenschloss_2025c} to the model refinement setting.

Because the constants $c_i$ are dropped from \eqref{eq:Refinement:intro:error-indicator} in the formulation of \eqref{eq:Refinement:ODE:box-constraints}, it is beneficial to have 
constants $c_1 \approx  \ldots \approx c_{n_g} \approx 1$,
 which in the kernel interpolation case is achieved when $\gamma_i  \approx \| (\bg_*)_i \|_{\cH_\bk(\Omega)}$,
 $i = 1, \ldots, n_g$; see \eqref{eq:Refinement:intro:error-indicator-RKHS}.

The problem \eqref{eq:Refinement:ODE:LQOCP} is parametrized by the refinement point $y_+ \in \Omega$, which affects the post-refinement error bound $\bepsilon\big( \by_c(t); Y_+, G_c \big)$ appearing in the box constraints for the control $\bdelta \in \big(L^\infty(I) \big)^{n_g}$.
The optimal objective function value of \eqref{thm:Refinement:ODE:ODE-solution-error-bound} is used as an acquisition function
to determine the next refinement point $y_+ \in \Omega$ to choose. 

The new refinement point $y_+ \in \Omega$ is selected to minimize the sensitivity-based upper bound 
\eqref{eq:Refinement:ODE:ODE-solution-error-bound} over a chosen set of candidates $Y_{\rm cand} \subset \Omega$, 
yielding the min-max problem
\begin{equation} \label{eq:Refinement:ODE:minmax}
	\min_{y_+ \in Y_{\rm cand}} \;  V( \by_c; Y_+, G_c \big),
\end{equation}
where, again, $V( \by_c; Y_+, G_c \big)$ denotes the optimal objective function value of \eqref{eq:Refinement:ODE:LQOCP}.

The model refinement method proceeds as follows. First, solve \eqref{eq:Refinement:ODE:minmax} to determine  $y_+ \in Y_{\rm cand}$, 
then compute $\bg_*(y_+)$ and obtain the refined surrogate $\bg(y; Y_+, G_+)$, then re-solve \eqref{eq:problem_formulation:IVP} 
with it to obtain a more accurate solution.

Note that \eqref{eq:Refinement:ODE:LQOCP} is a convex {\em max}imization problem.
A solution may not exist in the infinite-dimensional setting, and
such problems are generally NP-hard in discrete settings. See, e.g., \cite{HPBenson_1995a},
\cite{SBurer_ANLetchford_2009a}, \cite{RHorst_PMPardalos_NVThoai_2000a}.
However, due to the symmetry of the box constraints, the linearity of the dynamics, 
and the symmetry of the quadratic objective function,  one may still obtain a usable approximate upper 
bound in practice using tailored interior point methods with zero as the initial point. 
More details are given in \cite{JRCangelosi_MHeinkenschloss_2025c}.

\subsection{Optimization}  \label{sec:Refinement:OCP:OCP-sol}
An extension of the approach in \cref{sec:Refinement:ODE:ODE-sol} to refine models based on
improvements of the solution $\bx, \bu, \bp$ of the optimal control problem \eqref{eq:Sensitivity:OCP:OCP}
and of the correspnding co-state $\blambda$ is easily possible, at least in theory.
The corresponding acquisition function requires the solution of a linear-quadratic optimal control problem
in variables $\delta  \bx, \delta \bu, \delta \bp, \delta \blambda$ and $\bdelta$, $\bdelta^x$, $\bdelta^u$, $\bdelta^p$; however, this problem would be more difficult to solve than the already NP-hard problem \eqref{eq:Refinement:ODE:LQOCP}, so we do not discuss it further.



\end{document}